\documentclass{article}

\usepackage{polski}
\usepackage[english]{babel}
\usepackage[utf8]{inputenc}
\usepackage{amsthm}
\usepackage{amsmath}
\usepackage{amsfonts}
\usepackage{amssymb}
\usepackage{mathtools}
\usepackage{mathrsfs}
\usepackage[a4paper, left=3cm, right=3cm, top=3cm, bottom=3cm]{geometry}
\usepackage{url}
\usepackage[all]{xy}

\newcommand{\wt}[1]{{\widetilde{#1}}}
\newcommand{\injr}[1]{{\frac{\pi}{#1\sqrt{K}}}}
\providecommand{\dim}{\mathop{\rm dim}\nolimits}
\providecommand{\deg}{\mathop{\rm deg}\nolimits}
\providecommand{\vol}{\mathop{\rm vol}\nolimits}
\providecommand{\const}{\mathop{\rm const}\nolimits}
\providecommand{\Ricci}{\mathop{\rm Ricci}\nolimits}
\providecommand{\sec}{\mathop{\rm sec}\nolimits}
\providecommand{\diam}{\mathop{\rm diam}\nolimits}
\providecommand{\im}{\mathop{\rm im}\nolimits}
\providecommand{\exp}{\mathop{\rm exp}\nolimits}
\providecommand{\str}{\mathop{\rm str}\nolimits}
\providecommand{\supp}{\mathop{\rm supp}\nolimits}
\providecommand{\Lip}{\mathop{\rm Lip}\nolimits}
\providecommand{\dvol}{\mathop{\rm dvol}\nolimits}
\providecommand{\smear}{\mathop{\rm smear}\nolimits}

\newtheorem{thm}{Theorem}[section]
\newtheorem{prop}[thm]{Proposition}
\newtheorem{lemma}[thm]{Lemma}
\newtheorem{cor}[thm]{Corollary}

\theoremstyle{definition}

\newtheorem{remark}[thm]{Remark}
\newtheorem{defi}[thm]{Definition}

\title{Piecewise straightening and Lipschitz simplicial volume}
\author{Karol Strzałkowski}

\begin{document}
\maketitle

\begin{abstract}
We study the Lipschitz simplicial volume, which is a metric version of the simplicial volume. We introduce the piecewise straightening procedure for singular chains, which allows us to generalize the proportionality principle and the product inequality to the case of complete Riemannian manifolds of finite volume with sectional curvature bounded from above. We obtain also yet another proof of the proportionality principle in the compact case by a direct approximation of the smearing map.
\end{abstract}

\section{Introduction}
The simplicial volume is a homotopy invariant of manifolds defined for a closed manifold $M$ as
$$
\|M\| :=inf\{|c|_1\::\: \text{$c$ is a fundamental cycle with $\mathbb{R}$ coefficients}\},
$$
where $|\cdot|_1$ is an $\ell^1$-norm on $C_*(M,\mathbb{R})$ (which we will denote for simplicity as $C_*(M)$) with respect to the basis consisting of singular simplices. Although the definition is relatively straightforward, it has many applications. Most of them are mentioned in the work of Gromov \cite{G}, one of the most important is the use to the degree theorems. In general, by the degree theorem we understand a bound on the degree of a continuous map $f:M\rightarrow N$ between two $n$-dimensional Riemannian manifolds
$$
\deg(f)\leq \const_n\frac{\vol(M)}{\vol(N)}.
$$
Such a theorem may obviously require additional assumptions. The reason why the simplicial volume is suitable for establishing such theorems is its functoriality, i.e. if $f:M\rightarrow N$ is a map between two closed manifolds then
$$
\|N\|\leq \deg(f)\cdot \|M\|.
$$
One obtains easily that if $\|N\|\neq0$, then
$$
\deg(f)\leq \frac{\|M\|}{\|N\|}.
$$
Under some curvature assumptions, Gromov proved in \cite{G} that for a given Riemannian manifold $M$ we have $\|M\|\leq \const_n\cdot \vol(M)$ and $\|M\|\geq \const_n\cdot \vol(M)$, which imply the degree theorem if the curvature assumptions are satisfied.

In most cases simplicial volume is very difficult to compute exactly. However, it has a few properties which can be used to approximate it or at least decide if it is zero or not. Two of them which we are interested in are the product inequality and the proportionality principle.

\begin{thm}[\cite{G}]\label{PI0}
  Let $M$ and $N$ be two compact manifolds. Then the following inequality holds
 $$
 \|M\|\cdot\|N\| \leq \|M\times N\|.
 $$
\end{thm}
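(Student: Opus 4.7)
The plan is to use the duality between simplicial volume and bounded cohomology, reducing the inequality to a submultiplicativity statement for the cup product. If $\|M\| = 0$ or $\|N\| = 0$ the inequality is immediate, so assume both are positive.

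First I invoke the Gromov--Hahn--Banach duality principle: for a closed oriented $n$-manifold $M$ with $\|M\|>0$, the fundamental cohomology class $[M]^*\in H^n(M,\mathbb{R})$ admits bounded representatives, and its bounded seminorm satisfies $\|[M]^*\|_\infty\cdot\|M\|=1$. Hence for every $\varepsilon>0$ I can pick bounded cocycles $\omega_M\in C^m_b(M,\mathbb{R})$ and $\omega_N\in C^n_b(N,\mathbb{R})$ representing $[M]^*$ and $[N]^*$ respectively, with $\|\omega_M\|_\infty\leq(1+\varepsilon)/\|M\|$ and $\|\omega_N\|_\infty\leq(1+\varepsilon)/\|N\|$.

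Next I form the cross product $\omega := p_M^*\omega_M\smile p_N^*\omega_N\in C^{m+n}_b(M\times N,\mathbb{R})$ via the projections $p_M,p_N$ and the Alexander--Whitney cup product. Since that formula evaluates $(\phi\smile\psi)(\sigma)$ as a single product of the values on a front face and a back face of $\sigma$, we get the pointwise estimate $\|\omega\|_\infty\leq\|\omega_M\|_\infty\cdot\|\omega_N\|_\infty$. By the Künneth theorem $[\omega] = [M]^*\times[N]^* = [M\times N]^*$, so for any fundamental cycle $c$ of $M\times N$,
\[
1 = \langle[M\times N]^*,[M\times N]\rangle = \langle\omega,c\rangle \leq \|\omega\|_\infty\cdot|c|_1 \leq \frac{(1+\varepsilon)^2}{\|M\|\cdot\|N\|}\cdot|c|_1.
\]
Taking the infimum over fundamental cycles $c$ and then letting $\varepsilon\to 0$ gives $\|M\|\cdot\|N\|\leq\|M\times N\|$.

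The main obstacle is establishing the duality $\|[M]^*\|_\infty\cdot\|M\|=1$. This follows from Hahn--Banach applied to the closed affine subspace of fundamental cycles inside $C_n(M)$ (using that $\|M\|>0$ guarantees that the boundary subspace is at positive $\ell^1$-distance from this affine subspace), together with the observation that any bounded functional vanishing on boundaries extends to a bounded cocycle representing $[M]^*$. Once this duality is granted, the submultiplicativity of $\smile$ in the $\ell^\infty$-norm is immediate from the Alexander--Whitney formula, and the rest of the argument is purely formal; no information about the geometry of $M\times N$ beyond the existence of a fundamental class is needed.
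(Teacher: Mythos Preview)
Your proposal is correct and is exactly the classical bounded-cohomology argument the paper itself points to. The paper does not give a self-contained proof of Theorem~\ref{PI0}; it cites \cite{G} and, at the start of Section~\ref{SecApp}, simply remarks that the inequality ``can be established by passing to bounded cohomology and using the duality between $\ell^1$ semi-norm on homology and $\ell^\infty$ semi-norm on cohomology''---which is precisely your Hahn--Banach duality $\|[M]^*\|_\infty\cdot\|M\|=1$ together with the Alexander--Whitney submultiplicativity of the cross product.

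The only thing worth noting is that the paper's actual work lies in the \emph{Lipschitz non-compact} version (Theorem~\ref{PI1}), where this argument does not go through directly: the $\ell^1/\ell^\infty$ duality is more delicate for locally finite chains and compactly supported cochains, and one needs the notion of $(m,n)$-sparse cycles plus the piecewise straightening procedure to reduce to a situation where a restricted duality (computed on a fixed sparse set $A$) still yields the product estimate. Your compact-case argument needs none of that machinery, and the paper does not claim otherwise.
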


\begin{thm}[\cite{G}, \cite{Str}]\label{PP0}
 Let $M$ and $N$ be two compact Riemannian manifolds. Assume also that their universal covers are isometric. Then
 $$
 \frac{\|M\|}{\vol(M)} = \frac{\|N\|}{\vol(N)}.
 $$
\end{thm}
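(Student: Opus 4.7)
The strategy is a direct, discrete approximation of Thurston's smearing map, avoiding the measure-chain formalism. Let $X$ be the common universal cover, $G := \operatorname{Isom}(X)$ with Haar measure $\mu$ normalized so that its pushforward to $X$ is the Riemannian volume, $\Gamma_M,\Gamma_N \le G$ the deck transformation groups, $F \subset G$ a Borel fundamental domain for the left $\Gamma_N$-action, and $V := \mu(F)$. For a finite Borel partition $\mathcal P = \{A_j\}$ of $F$ with chosen sample points $g_j \in A_j$, and each singular simplex $\sigma \colon \Delta^n \to M$ with a chosen lift $\tilde\sigma \colon \Delta^n \to X$, I would set
$$
s_{\mathcal P}(\sigma) := \sum_j \mu(A_j)\, p_N \circ g_j \circ \tilde\sigma \;\in\; C_n(N),
$$
extending linearly, where $p_N\colon X\to N$ is the projection. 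First I would reduce to straight fundamental cycles of $M$; the straightening is norm-decreasing and renders the simplices smooth, so integration of differential forms is well-defined on them.

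The argument then rests on three properties. (a) The trivial bound $|s_{\mathcal P}(z)|_1 \le V \cdot |z|_1$. (b) Because each $g \in G$ is an isometry and both $\omega_M$ and $\omega_N$ lift to the same $G$-invariant volume form on $X$, one has $\int_{p_N \circ g \circ \tilde\sigma} \omega_N = \int_\sigma \omega_M$ for every $g$, whence $\int_{s_{\mathcal P}(z)} \omega_N = V \cdot \vol(M)$ \emph{exactly}, regardless of the choice of $\mathcal P$. (c) For a cycle $z$, the boundary $\partial s_{\mathcal P}(z)$ has $\ell^1$-norm that can be driven to zero by refining $\mathcal P$ and choosing lifts coherently. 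Granted (c), a bounded filling result on the compact manifold $N$ supplies a chain of vanishing norm that corrects $s_{\mathcal P}(z)$ to a true cycle $\bar s_{\mathcal P}(z)$ with $|\bar s_{\mathcal P}(z)|_1 \le V|z|_1 + o(1)$; by (b) together with $H_n(N;\mathbb R)\cong\mathbb R$, this cycle represents $\alpha_{\mathcal P}[N]$ with $\alpha_{\mathcal P}\to V\vol(M)/\vol(N)$. Therefore $V\vol(M)/\vol(N)\cdot\|N\|\le V|z|_1+o(1)$; refining $\mathcal P$ and taking the infimum over fundamental cycles $z$ yields $\vol(M)\|N\|/\vol(N)\le\|M\|$, and swapping the roles of $M$ and $N$ gives the reverse inequality.

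The hard part will be (c). Naively, $s_{\mathcal P}$ is \emph{not} a chain map: the face $\tilde\sigma\circ\iota_k$ of the chosen lift and the chosen lift $\widetilde{\sigma\circ\iota_k}$ of the face typically differ by an element of $\Gamma_M$, and their projections to $N$ are distinct singular simplices, so the per-simplex $\ell^1$-discrepancy is of order $V$ rather than $o(1)$. The resolution must exploit that in a cycle $z$ the face appearances come with coefficients summing to zero, so that with a coherent lifting scheme (for instance one built from a Borel section $M\to X$) the per-face errors cancel up to a term whose total $\ell^1$-mass is governed by the partition mesh together with the diameters of the straight simplices involved. Combining this cancellation with an efficient filling of the remaining small boundary cycle on $N$ is where one will really need compactness of $N$ and the (piecewise) straightening machinery developed earlier in the paper.
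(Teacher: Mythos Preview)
Your step~(c) has a genuine gap. The $\ell^1$-norm on singular chains assigns weight~$2$ to the difference of any two \emph{distinct} simplices, however close they are geometrically; refining $\mathcal P$ does nothing to shrink this. Concretely, if a face $\tau$ is shared by two simplices of $z$ whose chosen lifts induce lifts of $\tau$ differing by some $\gamma\in\Gamma_M\setminus\{e\}$, the resulting contribution to $\partial s_{\mathcal P}(z)$ is $\sum_j\mu(A_j)\bigl(p_N\circ g_j\gamma\,\tilde\tau-p_N\circ g_j\,\tilde\tau\bigr)$, and generically its $\ell^1$-norm is $2V$ for \emph{every} partition. One might hope to choose the partition and sample points so that for each $j$ there is a $j'$ with $p_N\circ g_j\gamma=p_N\circ g_{j'}$ and $\mu(A_j)=\mu(A_{j'})$; but this asks the induced partition of $\Gamma_N\backslash G$ to be invariant under right translation by $\gamma$, and already for $G=\mathbb R$, $\Gamma_N=\mathbb Z$, $\gamma$ irrational, no finite Borel partition does this. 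Your ``coherent lifting via a Borel section $M\to X$'' does not help either: the discrepancies $\gamma$ record how the section jumps between sheets of the cover, not the diameters of the simplices, so shrinking the simplices or the mesh leaves the $\ell^1$-error at order $V$.

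What actually rescues the argument is the step you relegate to an afterthought: straightening on~$N$. After applying $\str_m$ on $N$, simplices whose (subdivided) vertices land in the same pieces $F_j$ become \emph{equal} as singular simplices, so there are only finitely many simplex types and $\sum_j\mu(A_j)\,\str_m(p_N\circ g_j\gamma\,\tilde\tau)$ becomes a genuine Riemann sum for $\int_F\str_m(p_N\circ g\gamma\,\tilde\tau)\,d\mu(g)$, which by right Haar invariance is independent of~$\gamma$. At that point the cancellation is exact --- but then the discrete partition is superfluous: one may as well keep the continuous smear as a measure cycle in $\mathcal C_*^{PC^1,\Lip}(N)$ and straighten it, which is precisely the paper's route via Propositions~\ref{smear} and~\ref{measure_hom_theories}. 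So the ``direct approximation'' collapses to the paper's proof once the gap is filled. (A minor side issue: you must take $G=\operatorname{Isom}^+(X)$; with the full isometry group the volume form on $X$ is only $G$-invariant up to sign and the integral in~(b) vanishes.)
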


A natural question to ask is if these properties generalise somehow to the non-compact case. In order to have a fundamental class, one needs to consider $\ell^1$ norm on locally finite singular chains instead of just (finite) singular chains. In this case simplicial volume obviously does not have to be finite. Unfortunately, neither of the above properties holds in such generality. The product inequality does not hold because of another result of Gromov from \cite{G} that the simplicial volume of a product of at least 3 open manifolds is $0$, while there are examples of products of two such manifolds with nonzero simplicial volume \cite{LS}. The proportionality principle fails because of a similar reason. Take a product of three non-compact, locally symmetric space of finite volume. Its simplicial volume vanishes, but on the other hand there always exists a compact locally symmetric space with isometric universal cover \cite{B} and the simplicial volume of closed locally symmetric spaces of non-compact type is known to be nonzero \cite{LafS}.

The solution to these problems, also proposed by Gromov in \cite{G}, is to consider a geometric variant of simplicial volume by taking only Lipschitz chains. This way one obtains the Lipschitz simplicial volume
$$
\|M\|_{\Lip} :=inf\{|c|_1\::\: \text{$c\in C^{lf}_*(M)$ is a fundamental cycle with $\mathbb{R}$ coefficients, $\Lip(c)<\infty$}\}.
$$
In the case of closed manifolds the classical and the Lipschitz simplicial volumes coincide. L\"oh and Sauer studied the above invariant in \cite{LS} and proved that it may be a proper generalisation of the simplicial volume to the case of complete Riemannian manifolds of finite volume, not necessarily compact. In particular, in the presence of non-positive curvature they proved the proportionality principle and the product inequality. The main result of this article is a generalisation of their proofs to the case of manifolds with curvature bounded from above.

\begin{thm}[Product inequality]\label{PI1}
  Let $M$ and $N$ be two complete, Riemannian manifolds with sectional curvatures bounded from above. Then the following inequality holds
 $$
 \|M\|_{\Lip}\cdot\|N\|_{\Lip} \leq \|M\times N\|_{\Lip}.
 $$
\end{thm}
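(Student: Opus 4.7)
The plan is to adapt the duality argument that establishes the closed-manifold product inequality of Theorem \ref{PI0}: $\|M\|\cdot\|N\|\leq \|M\times N\|$ is equivalent via Hahn-Banach to the submultiplicativity of the $\ell^\infty$-norm on bounded cocycles representing the fundamental cohomology class under the cochain cross product. The Lipschitz locally finite analogue requires a duality between Lipschitz locally finite cycles (with the $\ell^1$-norm) and bounded Lipschitz cochains (with the $\ell^\infty$-norm), and this is precisely the setting in which the piecewise straightening procedure of the paper is meant to supply the regularisation that the geodesic straightening of L\"oh-Sauer provides under non-positive curvature.

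First, I would set up a Lipschitz analogue of bounded cohomology and let $\omega_M$ denote the corresponding fundamental Lipschitz cohomology class of degree $\dim M$. Using Hahn-Banach together with the piecewise straightening to pass from arbitrary Lipschitz locally finite cycles to piecewise straight ones (whose simplices lie in convex balls of radius controlled by the upper curvature bound), one would obtain a dual formula of the form
$$
\|M\|_{\Lip}^{-1} = \inf\{\,\|\phi\|_\infty \::\: \phi\text{ is a bounded Lipschitz cocycle representing } \omega_M\,\},
$$
and analogously for $N$ and $M\times N$. Second, with the standard Eilenberg-Zilber/Alexander-Whitney cochain cross product, the product $\alpha_M\times\beta_N$ of bounded Lipschitz cocycles representing $\omega_M$ and $\omega_N$ is a bounded Lipschitz cocycle representing $\omega_{M\times N}$, with $\|\alpha_M\times\beta_N\|_\infty\leq \|\alpha_M\|_\infty\cdot\|\beta_N\|_\infty$. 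Choosing $\alpha_M$ and $\beta_N$ with norms within $\epsilon$ of the respective infima and combining via the dual formula applied to $M\times N$ yields
$$
\|M\times N\|_{\Lip}^{-1}\leq \|\alpha_M\times\beta_N\|_\infty \leq (\|M\|_{\Lip}^{-1}+\epsilon)(\|N\|_{\Lip}^{-1}+\epsilon).
$$
Letting $\epsilon\to 0$ and taking reciprocals delivers the claimed inequality.

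The main obstacle is establishing the duality formula itself in the non-compact, locally finite, Lipschitz setting. One direction is an abstract Hahn-Banach argument; the reverse, which requires producing for each $\epsilon>0$ a Lipschitz fundamental cycle $c$ with $|c|_1$ close to $\|M\|_{\Lip}$ on which a prescribed bounded Lipschitz cocycle evaluates essentially by its operator norm, is the substantive step. One would start with a near-optimal Lipschitz locally finite fundamental cycle, subdivide and piecewise straighten each simplex so that it is contained in a convex ball where geodesic convex combinations are uniquely defined, and verify that the process does not inflate the $\ell^1$-norm and leaves the cycle in the fundamental class. The last verification amounts to exhibiting a chain homotopy between the original cycle and its piecewise straightening whose $\ell^1$-norm is controlled even under local finiteness; this is precisely the content of the piecewise straightening theorem of the paper, but folding it into the duality argument requires care with convergence of the relevant infinite sums.
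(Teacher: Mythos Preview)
Your proposal has a genuine gap at the duality step. The paper explicitly notes (just before stating the product-inequality meta-theorem) that the naive duality argument from the closed case ``does not generalize directly to the case of noncompact manifolds and Lipschitz simplicial volume''; the two obstructions are precisely that the relation between the $\ell^1$-seminorm on locally finite homology and the $\ell^\infty$-seminorm on cohomology with compact supports is more delicate than a Hahn--Banach identity, and that the cochain cross product is not well-behaved on Lipschitz compactly supported cochains in general. Your formula $\|M\|_{\Lip}^{-1}=\inf_\phi\|\phi\|_\infty$ therefore cannot be established as stated, and without it the rest of the argument does not start.

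The actual proof, following \cite{LS}, works with a \emph{relative} duality: for each $A\in S^{lf,\Lip}_{m+n}(M\times N)$ one has $\|M\times N\|^A=1/\|[M\times N]^*\|^A_\infty$, and the cross-product inequality $\|\phi\times\psi\|^A_\infty\leq\|\phi\|^{A_M}_\infty\|\psi\|^{A_N}_\infty$ holds only when $A$ is $(m,n)$-\emph{sparse}, i.e.\ when the relevant face-projections $A_M$, $A_N$ are themselves locally finite. The new ingredient supplied by piecewise straightening is therefore not the generic fact that it does not increase $\ell^1$-norm and preserves the homology class (which you correctly invoke), but rather that one can run it with a \emph{product} system of Borel sets $F^M_{j_1}\times F^N_{j_2}$ on $M\times N$; this forces the straightened cycle to have support in a set $A_{L,p}$ that is automatically $(m,n)$-sparse. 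Your proposal never introduces either sparseness or the product choice of Borel sets, so the mechanism that makes the cross-product estimate legitimate is missing.
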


\begin{thm}[Proportionality principle]\label{PP1}
 Let $M$ and $N$ be two complete Riemannian manifolds of finite volume with sectional curvatures bounded from above. Assume also that their universal covers are isometric. Then
 $$
 \frac{\|M\|_{\Lip}}{\vol(M)} = \frac{\|N\|_{\Lip}}{\vol(N)}.
 $$
\end{thm}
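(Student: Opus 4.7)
The plan is to adapt the smearing argument of L\"oh--Sauer \cite{LS} to the present setting, using the piecewise straightening procedure developed in the previous sections in place of ordinary geodesic straightening (which is not available under only an upper curvature bound). By symmetry, it suffices to show $\|N\|_{\Lip}/\vol(N) \leq \|M\|_{\Lip}/\vol(M)$. Identify $X := \wt{M} = \wt{N}$ via the given isometry; then $M = \Gamma_M\backslash X$ and $N = \Gamma_N\backslash X$ for discrete groups $\Gamma_M,\Gamma_N$ in $G := \mathrm{Isom}(X)$, of finite covolume with respect to a suitable normalization of Haar measure. Fix $\varepsilon>0$ and pick a locally finite Lipschitz fundamental cycle $c\in C_n^{lf}(M)$ with $|c|_1 \leq \|M\|_{\Lip}+\varepsilon$.

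First I would apply the piecewise straightening operator to $c$ to obtain a homologous locally finite Lipschitz fundamental cycle $c'$ whose simplices are geodesic and contained in small convex balls (smaller than the local convexity radius of $X$, which is positive because $\sec\leq K$). By the properties of the operator proved earlier, one can arrange $|c'|_1 \leq |c|_1+\varepsilon$ and ensure that all simplices of $c'$ have uniformly bounded Lipschitz constant.

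Second I would define the smearing map on straight Lipschitz simplices by averaging over the group modulo $\Gamma_N$:
$$\smear(\sigma) := \frac{1}{\vol(M)}\int_{\Gamma_N\backslash G} p_N\circ g\circ \wt{\sigma}\, d\bar\mu(g),$$
where $\wt\sigma \colon \Delta^n\to X$ is a lift of $\sigma$, $p_N\colon X\to N$ is the covering projection, and $\bar\mu$ is the induced measure on $\Gamma_N\backslash G$ of total mass $\vol(N)$, normalized so that $\smear$ is a chain map carrying the fundamental class of $M$ to that of $N$. A standard Fubini-type computation yields the operator bound $|\smear(c')|_1 \leq (\vol(N)/\vol(M))\,|c'|_1$; combined with the uniform Lipschitz control from step one, $\smear(c')$ is a Lipschitz locally finite fundamental cycle of $N$. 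Hence $\|N\|_{\Lip}\leq (\vol(N)/\vol(M))(\|M\|_{\Lip}+2\varepsilon)$ and letting $\varepsilon\to 0$ concludes.

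The main obstacle is step one. Under only an upper curvature bound the convexity radius of $X$ may degenerate at infinity, since it also depends on the injectivity radius, and a complete finite-volume manifold can have $\InjRad\to 0$ toward its ends. Thus no uniform subdivision depth can work and the straightening must be performed in a location-dependent fashion, while nevertheless producing a locally finite chain whose simplices carry Lipschitz constants bounded \emph{uniformly} in the subdivision (the last point being what allows $\smear(c')$ to be genuinely Lipschitz rather than having Lipschitz constants blowing up toward infinity on $N$). A secondary subtlety lies in checking that the smeared chain $\smear(c')$ is itself Lipschitz: this reduces, via Fubini, to a uniform bound on the Lipschitz constants of the integrand $g\mapsto p_N\circ g\circ \wt\sigma$, which is precisely what the output of step one supplies.
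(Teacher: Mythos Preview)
The central gap is in step two: the smeared object $\smear(c')$ is \emph{not} a locally finite singular chain on $N$. For each simplex $\sigma$ of $c'$, the expression $\int_{\Gamma_N\backslash G} p_N\circ g\circ\wt\sigma\,d\bar\mu(g)$ is an integral over a continuum, i.e.\ a signed Borel measure on the space of singular simplices in $N$, not an $\ell^1$ combination of countably many simplices. The inequality you write as $|\smear(c')|_1\leq(\vol N/\vol M)\,|c'|_1$ is a bound on the total variation of this measure, not on the $\ell^1$-norm of a singular chain, so it does not by itself yield $\|N\|_{\Lip}\leq(\vol N/\vol M)\,|c'|_1$. What is missing is exactly the passage from measure homology back to singular homology \emph{without increasing the norm}. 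In the paper this is supplied by the piecewise $C^1$ measure homology $\mathcal{H}^{PC^1,\Lip}_*(N)$ and the isometric isomorphism with $H^{PC^1,\ell^1,\Lip}_*(N)$ (Proposition~\ref{measure_hom_theories}); the proof of that isomorphism is precisely where the piecewise straightening is applied---on $N$, to the measure $\smear_n(c)$---to collapse it to a countable piecewise-straight chain of no greater norm. Straightening $c$ on the $M$-side serves only the auxiliary purpose of making the cycle piecewise $C^1$ so that $\dvol_N$ can be integrated against the smeared measure and the fundamental class recognised (Propositions~\ref{recognise_class} and~\ref{smear}).

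Your worry in the last paragraph is also misplaced. The exponential neighbourhoods $V_x$ are not open balls in $X$; they are abstract Riemannian manifolds built from $B_{T_xM}(0,\pi/\sqrt K)$ with the metric pulled back by $\exp_x$. Proposition~\ref{inj_rad_prop} shows that points near the origin of $V_x$ have injectivity radius at least $\pi/(4\sqrt K)$, a bound depending only on $K$ and \emph{not} on the injectivity radius of $M$ or $X$. Hence the subdivision depth $m$ in the piecewise straightening depends only on $n$, $K$ and $\Lip(c)$, uniformly over all simplices; there is no degeneration toward the ends, and no location-dependent subdivision is needed.
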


In the work of L\"oh and Sauer, non-positive curvature assumption is needed to introduce the procedure of straightening the simplices. Namely, given a singular chain, one can homotope it to the chain consisting of straight simplices by using the fact that in simply connected, non-positively curved Riemannian manifolds geodesics are unique. To generalize this straightening technique to the case of manifolds with curvature bounded from above we construct 'exponential neighbourhoods' of points of a given manifold, where the straightening can be applied after some iterated barycentric subdivision of simplices. These 'neighbourhoods' were introduced by Gromov in \cite[4.3(B)]{G}, however, we give much more details.

Since for closed manifolds sectional curvature is always bounded and the Lipschitz simplicial volume equals the classical one, we obtain yet another proof of Theorem \ref{PP0}. We follow Thurston's approach from \cite{T} (used in \cite{Str}), however, we obtain the proof without any use of bounded cohomology by approximating directly the smearing map.

The proportionality principle provides direct connection between Lipschitz simplicial volume and volume, therefore one obtains immediately

\begin{cor}
 Let $f:M\rightarrow N$ be a proper Lipschitz map between two complete Riemannian manifolds of finite volume with sectional curvatures bounded from above, which in addition have isometric universal covers. Assume moreover that $\|N\|_{\Lip}\neq 0$. Then
 $$
 \deg(f)\leq \frac{\vol(M)}{\vol(N)}.
 $$
\end{cor}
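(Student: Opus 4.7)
The plan is to derive the inequality by chaining together functoriality of Lipschitz simplicial volume under proper Lipschitz maps with the proportionality principle (Theorem \ref{PP1}), which is the main tool newly available in the setting of upper curvature bounds.

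First I would establish the functoriality statement
\[
\|N\|_{\Lip}\leq |\deg(f)|\cdot\|M\|_{\Lip},
\]
valid for any proper Lipschitz map $f:M\to N$ between oriented Riemannian manifolds of the same dimension. The key observation is that a proper Lipschitz map induces a chain map $f_*:C^{lf}_*(M)\to C^{lf}_*(N)$: properness guarantees that the pushforward of a locally finite chain is locally finite, while the Lipschitz condition on $f$ ensures that the Lipschitz constant of each pushed-forward simplex is controlled, so $f_*$ sends Lipschitz locally finite chains to Lipschitz locally finite chains without increasing the $\ell^1$-norm. Applied to $f_*[M]=\deg(f)\cdot[N]$ at the level of fundamental classes (this identity holds for proper maps of finite-volume manifolds and the usual degree definition), one infers that any Lipschitz locally finite fundamental cycle $c$ of $M$ yields a Lipschitz locally finite fundamental cycle $\frac{1}{\deg(f)}f_*c$ of $N$ (when $\deg(f)\neq 0$), whence the stated inequality by taking the infimum.

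Once functoriality is in hand, the corollary follows in two lines. If $\deg(f)\leq 0$ there is nothing to prove since the right-hand side $\vol(M)/\vol(N)$ is nonnegative. Otherwise, using $\|N\|_{\Lip}\neq 0$, divide through to get
\[
\deg(f)\leq\frac{\|M\|_{\Lip}}{\|N\|_{\Lip}}.
\]
The proportionality principle (Theorem \ref{PP1}), applicable because $M$ and $N$ are complete of finite volume, have sectional curvatures bounded from above, and share isometric universal covers, gives
\[
\frac{\|M\|_{\Lip}}{\vol(M)}=\frac{\|N\|_{\Lip}}{\vol(N)},
\]
and since $\|N\|_{\Lip}>0$ this can be rearranged into $\|M\|_{\Lip}/\|N\|_{\Lip}=\vol(M)/\vol(N)$. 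Combining the two displayed inequalities yields the result.

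The only step with any real content is the functoriality claim; the rest is formal. The mild subtlety there is ensuring that the pushforward $f_*$ is compatible with the locally finite fundamental class and respects the Lipschitz condition uniformly—both points are standard for proper Lipschitz maps, and I expect them to have been recorded (or to be easily extractable) from L\"oh--Sauer's treatment of Lipschitz simplicial volume in \cite{LS}, so no new geometry beyond Theorem \ref{PP1} is required.
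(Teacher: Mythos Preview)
Your proposal is correct and matches the paper's intended argument: the paper does not give an explicit proof but states that the corollary follows ``immediately'' from the proportionality principle, having already recalled in the introduction the functoriality inequality $\deg(f)\leq \|M\|/\|N\|$ (which, as you note, extends to the Lipschitz setting via proper Lipschitz pushforward as in \cite{LS}). Your write-up simply spells out these two standard ingredients.
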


L\"oh and Sauer combined this fact for non-positively curved manifolds with the facts that Lipschitz simplicial volume is strictly positive for locally symmetric spaces of non-compact type of finite volume \cite{B,LafS,LS}, that there are only finitely many symmetric spaces (with the standard metric) in each dimension and that $\|N\|\leq C_n \vol(N)$ if $\Ricci(N)\geq -(n-1)$ and $\sec(N)\leq 1$ \cite{G,LS} to prove the following theorem.

\begin{thm}[Degree theorem, \cite{CF,LafS,LS}]
 For every $n\in\mathbb{N}$ there is a constant $C_n>0$ with the following property: Let $M$ be an $n$-dimensional locally symmetric space of non-compact type with finite volume. Let $N$ be an $n$-dimensional complete Riemannian manifold of finite volume with $\Ricci(N)\geq -(n-1)$ and $\sec(N)\leq 1$, and let $f:N\rightarrow M$ be a proper Lipschitz map. Then
 $$
  \deg(f)\leq C_n\cdot\frac{\vol(N)}{\vol(M)}.
 $$
\end{thm}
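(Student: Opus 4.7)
The plan is to combine the functoriality of the Lipschitz simplicial volume with Theorem~\ref{PP1} together with two classical comparison estimates. Applying functoriality to the proper Lipschitz map $f:N\to M$ of degree $d$ yields
$$
|d|\cdot\|M\|_{\Lip} \;\leq\; \|N\|_{\Lip},
$$
so it suffices to produce a uniform positive lower bound for $\|M\|_{\Lip}/\vol(M)$ and a uniform upper bound for $\|N\|_{\Lip}/\vol(N)$, both depending only on $n$.

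For the lower bound, since $M$ is locally symmetric of non-compact type its sectional curvature is nonpositive and Theorem~\ref{PP1} applies. By Borel's arithmetic lattice theorem there is a closed locally symmetric space $M_0$ whose universal cover is isometric to $\wt M$. Proportionality then gives
$$
\frac{\|M\|_{\Lip}}{\vol(M)} \;=\; \frac{\|M_0\|_{\Lip}}{\vol(M_0)} \;=\; \frac{\|M_0\|}{\vol(M_0)},
$$
where the last equality uses that Lipschitz and classical simplicial volume agree on closed manifolds. Lafont--Schmidt's positivity theorem ensures $\|M_0\|>0$, so the ratio is strictly positive. After fixing the standard metric normalisation, the classical fact that in each dimension there are only finitely many symmetric spaces of non-compact type allows us to take the minimum of finitely many positive ratios, yielding the desired constant $c_n>0$.

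For the upper bound, Gromov's volume estimate, upgraded to the Lipschitz setting by L\"oh--Sauer, gives $\|N\|_{\Lip}\leq C'_n\vol(N)$ whenever $\Ricci(N)\geq -(n-1)$ and $\sec(N)\leq 1$. Combining with the previous bound yields $|d|\leq (C'_n/c_n)\cdot\vol(N)/\vol(M)$, so $C_n=C'_n/c_n$ works.

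The main conceptual obstacle is the proportionality principle itself, Theorem~\ref{PP1}; the remaining ingredients (Borel's lattice existence, Lafont--Schmidt positivity, Gromov's volume estimate, and the finiteness of symmetric spaces of non-compact type in each dimension) are classical and may be quoted as black boxes. Given Theorem~\ref{PP1}, the degree theorem is a direct packaging of these facts, with the only bookkeeping point being a consistent metric normalisation on the symmetric-space side so that the minimum defining $c_n$ is taken over a finite set.
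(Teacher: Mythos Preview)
Your proposal is correct and follows exactly the argument the paper sketches in the prose preceding the theorem: functoriality of $\|\cdot\|_{\Lip}$ under proper Lipschitz maps, the lower bound on $\|M\|_{\Lip}/\vol(M)$ via proportionality with a Borel compact form, Lafont--Schmidt positivity, and finiteness of symmetric spaces in each dimension, and the Gromov/L\"oh--Sauer upper bound $\|N\|_{\Lip}\leq C'_n\vol(N)$ under the given curvature hypotheses. The paper does not give its own formal proof of this statement but only assembles these ingredients with citations, and your write-up is precisely that assembly.
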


Possible generalisation of the above theorem depends on the further results on non-vanishing of the Lipschitz simplicial volume. At the moment most results in this direction are based on the proportionality principle indicated above, non-vanishing of the simplicial volume for negatively curved spaces \cite{T}, locally symmetric spaces of non-compact type \cite{LafS} and their products and connected sums \cite{G}.

\subsection*{Notation}
To clarify the notation, we will denote by $B_M(x,r)$ an open ball in a space $M$ centred at $x$ with radius $r$, and more generally by $B_M(X,r)$ an open $r$-neighbourhood of a set $X\subset M$. We consider all Riemannian manifolds as metric spaces with metric induced by Riemannian structure. In particular, if they are complete they are geodesic as metric spaces i.e. the distance between two points equals the length of shortest path joining them. We will also identify a $k$-dimensional simplex $\Delta^k$ with a set $\{(x_0,..,x_k)\in\mathbb{R}_{\geq 0}^{k+1}\::\:\sum_{i=0}^kx_i=1\,,\}\subset\mathbb{R}^{k+1}$ with an induced Riemannian structure.

\subsection*{Organization of this work}
In Section~\ref{SecPiece} we define the 'exponential neighbourhoods', recall the basic facts about straight simplices and develop the piecewise straightening procedure for singular chains. In Section~\ref{SecHom} we define piecewise $C^1$ chains and introduce corresponding singular and Milnor-Thurston-type homology theories. Section~\ref{SecApp} is devoted to the proofs of theorems \ref{PP0}, \ref{PI1} and \ref{PP1}.

\subsection*{Acknowledgements}
I am grateful to Piotr Nowak for bringing the simplicial volume and \cite{LS} to my attention. I would also like to thank Clara L\"{o}h for discussions about \cite{LS}, Jean-Fran\c{c}ois Lafont for an inspiring conversation and Federico Franceschini for very useful comments on the previous versions of this paper.

\section{Piecewise straightening procedure}\label{SecPiece}
The straightening procedure on non-positively curved manifolds is well known and applied successfully to many problems. Roughly speaking, given a complete, simply connected Riemannian manifold $M$ with non-positive curvature and a singular simplex $\sigma:\Delta^k\rightarrow M$ with vertices $x_0,...,x_k$ the straightening of this simplex is the geodesic simplex $[x_0,...,x_k]$, which is defined inductively to be a geodesic join of $x_k$ with geodesic simplex $[x_0,...,x_{k-1}]$. Because geodesics on $M$ joining points are unique, there exists a (unique) geodesic homotopy between $\sigma$ and $[x_0,...,x_k]$ which is defined as the geodesic join $[\sigma,[x_0,...,x_k]]$. We can apply the same procedure to the singular simplex $\sigma$ on non necessarily simply-connected Riemannian manifold $M$ with non-positive curvature by taking its lift to the universal cover $\wt{\sigma}:\Delta^k\rightarrow\wt{M}$, applying straightening there and pushing down the homotopy. It can be shown that it does not depend on the choice of the lift, therefore it can be extended to the straightening on singular chains and induces an isomorphism on homology. The same applies to locally finite Lipschitz chains and homology. The straightening procedure has also the advantage that it does not increase $l_1$-norm on chains, therefore the above isomorphism turns out to be isometric and the simplicial volume can be computed by considering only straight chains. This fact, together with a careful control of the set of vertices, is a key to prove e.g. proportionality principle for Lipschitz simplicial volume and inequalities for products of manifolds, assuming all the manifolds have non-positive curvature.

The fact which obviously fails if we consider a Riemannian manifold with $\sec(M)<K<\infty$ is the existence of unique geodesics on simply connected manifolds. They do exist locally, but unfortunately not uniformly, even if we pass to the universal covering. Therefore the crucial problem in defining piecewise straightening procedure on $M$ is the choice of a suitable space in which we have such uniform local uniqueness of geodesics. If such a space is provided, one can define piecewise straightening by subdividing barycentrically given singular chain, straighten every small simplex and glue straightened simplices back.

In Section~\ref{EN} for every point of $M$ we construct an 'exponential neighbourhood' of it which is a space admitting a local isometry to $M$ for which there exists a uniform lower bound (depending on $K$) of the injectivity radius of points in some (uniform) neighbourhood of the origin. This system of spaces and local isometries on $M$ admits also transition maps (at least locally) which allow one to apply some local constructions independently of the choice of point for which we consider its exponential neighbourhood. The construction is sketched in \cite[4.3(B)]{G}, however, we provide more detailed approach. In Section~\ref{SSAH} we recall basic notions concerning geodesic simplices and joins and prove that under some curvature and diameter conditions a geodesic join of Lipschitz maps is also Lipschitz. Finally, in Section~\ref{PSP} we define the piecewise straightening procedure for locally finite Lipschitz chains.

\subsection{Exponential neighbourhoods}\label{EN}
Let $M$ be a connected, complete $n$-dimensional Riemannian manifold with $\sec(M)<K$, $K>0$.

\begin{defi}
Let $x\in M$ and let $r\leq\injr{}$. Consider an open ball $B_{T_xM}(0,r)$ in the tangent space $T_xM$. Then the exponential map $\exp_x:B_{T_xM}(0,r)\rightarrow M$ is an immersion by Rauch-Berger comparison theorem \cite[Theorems 1.28, 1.29]{CE}. We endow $B_{T_xM}(0,r)$ with a Riemannian metric induced from $M$ by $\exp_x$ and obtain a space $V_x(r)$ which we call \emph{an $r-$exponential neighbourhood} of $x$ with distinguished point $\bar{x}\in V_x(r)$, which corresponds to $0$ in $B_{T_xM}(0,r)$ and the canonical local isometry $p_x:V_x(r)\rightarrow M$ such that $p_x(\bar{x})=x$.

If $r=\injr{}$, we will denote this space for short as $V_x$. 
\end{defi}

Spaces $V_x$ are not complete, however, the closures of open balls $B_{V_x}(\bar{x},r)$ for any $r<\injr{}$ are complete as metric spaces and for $y\in V_x(r)$ the map $\exp_y:T_yV_x\rightarrow V_x$ is defined for vectors of length less than $\injr{}-r$. As we will see next, these spaces have all desired properties described in the introduction of this section. First of all we check that there exists a uniform lower bound on the injectivity radii of points around the (uniform) origins of $V_x$. 

\begin{prop}\label{inj_rad_prop}
 Let $x\in M$ and let $y\in V_x(\injr{4})$. Then the injectivity radius of $y$ in $V_x$ is at least $\injr{4}$.
\end{prop}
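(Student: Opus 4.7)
The plan is to verify the two ingredients that together give $\InjRad_{V_x}(y)\geq\injr{4}$: that $\exp^{V_x}_y$ is defined and nonsingular on $B:=B_{T_yV_x}(0,\injr{4})$, and that it is globally injective on $B$.

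For the first two I would set coordinates based at $\bar x$. By construction of the pullback metric, $\exp^{V_x}_{\bar x}$ is the identity diffeomorphism $B_{T_xM}(0,\injr{})\to V_x$, so $d_{V_x}(\bar x,p)=|p|_{T_xM}$ for every $p\in V_x$ and in particular $d_{V_x}(\bar x,y)<\injr{4}$. Definedness of $\exp^{V_x}_y$ on $B$ is then the remark preceding the proposition (the available radius is $\injr{}-d_{V_x}(\bar x,y)>3\injr{4}$); nonsingularity follows from the Rauch-Berger comparison theorem, since $p_x\colon V_x\to M$ is a local isometry, so $V_x$ inherits $\sec<K$ and the conjugate radius at every point of $V_x$ is therefore at least $\injr{}$.

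The heart of the argument is global injectivity on $B$. Suppose $\exp^{V_x}_y(v_1)=\exp^{V_x}_y(v_2)=z$ with $v_1\neq v_2$ in $B$; then there are two distinct geodesic segments $\gamma_1,\gamma_2$ from $y$ to $z$ of lengths $|v_i|<\injr{4}$, and by the triangle inequality both are contained in the closed ball $\overline{B_{V_x}(\bar x,\injr{2})}$. Via $\exp^{V_x}_{\bar x}$ this closed ball is homeomorphic to a closed Euclidean ball, hence compact (so complete as a metric space) and simply connected; as a region in a Riemannian manifold with $\sec\leq K$, it is also locally CAT($K$). By the CAT($K$) form of the Cartan-Hadamard theorem (Alexandrov; cf.\ Bridson-Haefliger, Ch.\ II.4) this closed ball is globally CAT($K$), so any two of its points at distance $<\injr{}$ are joined by a unique geodesic. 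Since $d_{V_x}(y,z)<\injr{4}<\injr{}$ this forces $\gamma_1=\gamma_2$ and hence $v_1=v_2$, a contradiction.

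The main obstacle I anticipate is the precise invocation of the Cartan-Hadamard theorem in the $K>0$ case, where one must carefully verify the completeness and diameter hypotheses for the specific version used, together with the subtlety that $V_x$ itself is not complete (only the closed subball of the required radius is). A more hands-on alternative bypassing the CAT($K$) formalism would be a direct Rauch/Toponogov angle comparison: decompose the bigon $\gamma_1\cup\gamma_2^{-1}$ into two geodesic triangles with third vertex $\bar x$ (each of perimeter $<\injr{}$, using $d_{V_x}(\bar x,y),d_{V_x}(\bar x,z)<\injr{2}$) and compare with their unique congruence class of comparison triangles in the model sphere $S^n_K$.
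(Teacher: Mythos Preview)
Your approach via CAT($K$) geometry is genuinely different from the paper's, and while plausible it brings in exactly the technical difficulties you flag at the end. The claim that the closed ball $\overline{B_{V_x}(\bar x,\pi/(2\sqrt{K}))}$ is locally CAT($K$) is not automatic at boundary points, where the induced length metric may fail to be locally geodesic; and the $K>0$ version of Cartan--Hadamard in Bridson--Haefliger carries hypotheses beyond simple connectedness and a local curvature bound. These issues can probably be resolved, but not for free.

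The paper's argument is more elementary and uses no comparison geometry for the injectivity step. It introduces a \emph{second} exponential chart $V'_y:=B_{T_yV_x}(0,\pi/(2\sqrt K))$ with the metric pulled back from $V_x$, so that $\exp_y\colon V'_y\to V_x$ is a local isometry. Given $z_1,z_2\in B_{V'_y}(\bar y,\pi/(4\sqrt K))$ with $\exp_y(z_1)=\exp_y(z_2)=z$, one picks a lift $\widetilde x$ of $\bar x$ in the same ball and joins each $z_i$ to $\widetilde x$ by a geodesic $\gamma_i$ in $V'_y$. The images $\exp_y(\gamma_1),\exp_y(\gamma_2)$ are then geodesics in $V_x$ from $z$ to $\bar x$; but by the very construction of $V_x$ as an exponential chart based at $\bar x$, geodesics through $\bar x$ are \emph{unique}. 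Hence $\exp_y(\gamma_1)=\exp_y(\gamma_2)$, and since $\exp_y$ is a local isometry at $\widetilde x$, the two $\gamma_i$ share the same germ there, giving $\gamma_1=\gamma_2$ and $z_1=z_2$. The whole argument rests on a single built-in fact---uniqueness of radial geodesics in an exponential chart---and no CAT($K$) or Toponogov comparison. Your hands-on alternative (triangles with apex $\bar x$) points in this direction, but the paper's double-lift trick is what makes that uniqueness directly usable without any angle estimates.
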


\begin{proof}
If $y\in B_{V_x}(\bar{x},\injr{4})$ then the exponential map $\exp_y:T_yV_x\rightarrow V_x$ is defined for vectors of length less than $\frac{3\pi}{4\sqrt{K}}$. Because of the curvature bound, it is immersion by Rauch-Berger comparison theorem, so we only need to prove that it is injective on $B_{T_yV_x}(0,\injr{4})$.

Denote by $V'_y$ the space $B_{T_yV_x}(0,\injr{2})$ endowed with Riemannian metric induced from $V_x$ (in particular the exponential map $\exp_y:V'_y\rightarrow V_x$ becomes a local isometry) with distinguished point $\bar{y}$ corresponding to $0$ in $T_yV_x$. Let $z_1, z_2\in B_{V'_y}(\bar{y},\injr{4})$ be such that $\exp_y(z_1)=\exp_y(z_2)=z$ and let $\wt{x}\in B_{V'_y}(\bar{y},\injr{4})$ be some lift of $\bar{x}$, i.e. any point satisfying $\exp_y(\wt{x})=\bar{x}$. Such point exists in $B_{V'_y}(\bar{y}'\injr{4})$ because $d_{V_x}(\bar{x},y)<\injr{4}$, but need not be unique. Since $\wt{x},z_1,z_2\in B_{V'_y}(\bar{y},\injr{4})$ there exists a geodesic $\gamma_1$ in $V'_y$ joining $z_1$ and $\wt{x}$. Similarly, there is a geodesic $\gamma_2$ joining $z_2$ and $\wt{x}$. Because $\exp_y$ is a local isometry on $V'_y$, both $\exp_y(\gamma_1)$ and $\exp_y(\gamma_2)$ are geodesics joining $\bar{x}$ and $z$ inside $V_x$. But by the construction of the exponential map and the space $V_x$, all geodesics joining $\bar{x}$ and any other point inside $V_x$ are unique. In particular $\exp_y(\gamma_1)=\exp_y(\gamma_2)$. We use again the fact that $\exp_y$ is a local isometry around $\wt{x}$ to see that both geodesics $\gamma_1$ and $\gamma_2$ have the same tangent line in $\wt{x}$ and the same direction, hence (without loss of generality) $\gamma_1$ is a subgeodesic of $\gamma_2$. Moreover, because $\exp_y$ does not change the length of geodesics, we have in fact $\gamma_1=\gamma_2$, hence $z_1=z_2$ q.e.d.
\end{proof}

Secondly, we check the existence of 'transition maps' which will allow us to preform local constructions on spaces $V_x$ independently of $x\in M$.

\begin{prop}\label{transition_prop}
Let $x,y\in M$ be such that $d_M(x,y)<\injr{4}$. Let also $y'$ be any lift of $y$ to $V_x(\injr{4})$. Then there exists a locally isometric diffeomorphism $I_{y',x}:V_y(\injr{4})\rightarrow B_{V_x}(y',\injr{4})$ such that we have a commutative diagram
$$
\xymatrix{
V_y(\injr{4}) \ar@{>}[rr]^{I_{y',x}} \ar@{>}[rd]_{p_{y}} && B_{V_x}(y',\injr{4})\ar@{>}[ld]^{p_x} \\
& M &
}
$$
\end{prop}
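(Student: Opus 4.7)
The construction is explicit: the desired map is ``exponentiate from $y'$ inside $V_x$'' transported to $V_y$ via a tangent-space identification. Since $p_x\colon V_x\to M$ is a local isometry with $p_x(y')=y$, its differential $\phi:=d(p_x)_{y'}\colon T_{y'}V_x\to T_y M$ is a linear isometry. By Proposition~\ref{inj_rad_prop}, the exponential map of $V_x$ at $y'$ gives a diffeomorphism
\[
 \exp_{y'}\colon B_{T_{y'}V_x}(0,\injr{4}) \longrightarrow B_{V_x}(y',\injr{4}).
\]
On the other hand, as a set $V_y(\injr{4})=B_{T_yM}(0,\injr{4})$, and $p_y$ is just the exponential map of $M$ at $y$ restricted to this ball.

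Define
\[
 I_{y',x}(v) := \exp_{y'}\bigl(\phi^{-1}(v)\bigr) \quad \text{for } v \in V_y(\injr{4}),
\]
which makes sense because $\phi^{-1}(v)\in B_{T_{y'}V_x}(0,\injr{4})$. As a composition of the linear isometry $\phi^{-1}$ with the diffeomorphism displayed above, $I_{y',x}$ is a diffeomorphism onto $B_{V_x}(y',\injr{4})$. To verify the commutative triangle, I appeal to the standard naturality of the exponential map under local isometries: since $p_x$ is a local isometry, the exponential map of $V_x$ at $y'$ and that of $M$ at $y$ are intertwined via $\phi$, that is, $p_x\circ \exp_{y'}=\exp_y\circ\,\phi$ wherever both sides are defined. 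Consequently $p_x(I_{y',x}(v))=\exp_y(v)=p_y(v)$.

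Finally, differentiating the identity $p_x \circ I_{y',x} = p_y$ gives $dp_x\circ dI_{y',x} = dp_y$ pointwise. Since $p_x$ and $p_y$ are local isometries, their differentials are pointwise linear isometries, hence so is $dI_{y',x}=(dp_x)^{-1}\circ dp_y$; this makes $I_{y',x}$ itself a local isometry. I do not foresee any substantial obstacle: the key input (a lower bound on the injectivity radius of $y'$ in $V_x$) is exactly Proposition~\ref{inj_rad_prop}, and the remainder reduces to the naturality of $\exp$ under local isometries combined with the identifications of $V_x$ and $V_y$ with balls in the respective tangent spaces.
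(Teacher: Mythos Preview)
Your proof is correct and follows essentially the same approach as the paper: both use Proposition~\ref{inj_rad_prop} to get the diffeomorphism $\exp_{y'}$, then identify $V_y(\injr{4})$ with $B_{T_{y'}V_x}(0,\injr{4})$ via the differential of $p_x$ (the paper phrases this as identifying both with the space of short geodesics in $M$ emanating from $y$, which amounts to the same linear isometry $\phi$). Your version is in fact more explicit, writing down the formula $I_{y',x}=\exp_{y'}\circ\phi^{-1}$ and spelling out the naturality $p_x\circ\exp_{y'}=\exp_y\circ\phi$ and the local-isometry check, where the paper simply declares these ``straightforward''.
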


\begin{proof}
 By Proposition \ref{inj_rad_prop} we know that $\exp_{y'}$ provides a diffeomorphism
 $$
 \exp_{y'}:B_{T_{y'}V_x}(0,\injr{4})\rightarrow B_{V_x}(y',\injr{4})\subset V_x
 $$
 which can be corrected to be a local isometry by changing the Riemannian metric on $B_{T_{y'}V_x}(0,\injr{4})$. Hence it suffices to show that $V_y(\injr{4})$ is isometric to $B_{T_{y'}V_x}(0,\injr{4})$ (with Riemannian metric induced by $\exp_y$). However, both spaces can be identified with the space of geodesics of length less than $\injr{4}$ starting from $y$, with Riemannian metric induced from $M$ by the map mapping geodesic to its endpoint. Checking the commutativity of a diagram is straightforward. 
\end{proof}

Finally we establish the lifting property for spaces $V_x$ with respect to singular simplices with sufficiently small Lipschitz constants. Recall that if $X$ is a metric space and $\gamma:[0,1]\rightarrow X$ then we define the \emph{length of $\gamma$} to be
$$
L(\gamma) := \sup\{\sum_{i=1}^n d_X(\gamma(t_{i-1}),\gamma(t_i))\::\: 0=t_0<t_1<...<t_n=1,\,n\in\mathbb{N}\}
$$
and we say that $X$ is \emph{geodesic} if it is path-connected and for any two points their distance equals the length of the shortest path between them, called \emph{geodesic}. We will use the following simple fact.

\begin{lemma}\label{lemma_geodesic_lip}
 Let $X$ be a geodesic metric space and let $f:X\rightarrow Y$ be a Lipschitz map. Then for every $\varepsilon>0$
 $$
 \Lip(f) = \sup\{\frac{d_Y(f(x),f(x'))}{d_X(x,x')}\::\: x,x'\in X \,,\, 0<d_X(x,x')<\varepsilon\}
 $$
\end{lemma}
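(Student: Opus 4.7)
Let $S_\varepsilon$ denote the supremum appearing on the right-hand side. The inequality $S_\varepsilon \leq \Lip(f)$ is immediate from the definition of the Lipschitz constant, so the content is the reverse direction. The plan is to use the geodesic structure of $X$ to break any pair of distant points into a chain of nearby points, on which the supremum defining $S_\varepsilon$ directly applies.

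Given distinct $x, x' \in X$, set $d := d_X(x, x') > 0$ and, by the geodesic hypothesis, pick a minimizing path $\gamma : [0, d] \to X$ joining them and parametrized by arc length. For any $0 \leq s < t \leq d$, the restriction $\gamma|_{[s,t]}$ is itself length-minimizing between its endpoints---otherwise one could shortcut $\gamma$ and contradict that its total length realizes $d_X(x, x')$---so $d_X(\gamma(s), \gamma(t)) = t - s$. I would then pick $n \in \mathbb{N}$ with $d/n < \varepsilon$, set $x_i := \gamma(id/n)$, and use the triangle inequality to telescope:
$$
d_Y(f(x), f(x')) \leq \sum_{i=1}^n d_Y(f(x_{i-1}), f(x_i)) \leq S_\varepsilon \sum_{i=1}^n d_X(x_{i-1}, x_i) = S_\varepsilon \cdot d,
$$
where the middle inequality uses $0 < d_X(x_{i-1}, x_i) = d/n < \varepsilon$ so that each consecutive pair is admissible for the supremum defining $S_\varepsilon$. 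Dividing by $d$ and taking the supremum over all admissible $x, x'$ yields $\Lip(f) \leq S_\varepsilon$, completing the proof.

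The argument is fundamentally routine and I do not expect any genuine obstacle. The one subtle point deserving mention is the sub-minimality observation $d_X(\gamma(s), \gamma(t)) = t - s$: it is what guarantees that consecutive subdivision points are distinct (so that each term of the telescoping bound is controlled by $S_\varepsilon$) and that the intermediate distances sum exactly to $d$. Strictly speaking, only the upper bound $d_X(x_{i-1}, x_i) \leq d/n$ is needed for the final estimate, but invoking the minimizing property of $\gamma|_{[s,t]}$ gives the cleanest presentation.
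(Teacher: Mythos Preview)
Your proof is correct and follows essentially the same approach as the paper: both subdivide a minimizing geodesic between two given points into pieces of length less than $\varepsilon$ and use the triangle inequality to pass from the global ratio to the local one. The only cosmetic difference is that the paper first fixes a pair $(x,x')$ with ratio exceeding $\Lip(f)-\delta$ and extracts a single nearby pair with the same property, whereas you bound the ratio for \emph{every} pair by $S_\varepsilon$ directly; your formulation is if anything slightly cleaner.
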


\begin{proof}
 The '$\geq$' inequality is obvious, we need to prove the opposite one. Let $\delta>0$ and let $x,x'\in X$ be two points such that
 $$
 d_Y(f(x),f(x'))>(\Lip(f)-\delta)d_X(x,x')
 $$ 
 Let also $\gamma:[0,d_X(x,x')]\rightarrow X$ be the shortest geodesic joining $x$ and $x'$. Subdivide $\gamma$ into nontrivial subgeodesics $\gamma_1,...,\gamma_n$ of length less than $\varepsilon$ and let $x=x_0,x_1,...,x_n=x'$ be their subsequent endpoints. Then we have
 $$
 \sum_{i=1}^n d_Y(f(x_{i-1}),f(x_i)) \geq d_Y(f(x),f(x')) > (\Lip(f)-\delta)d_X(x,x') = (\Lip(f)-\delta)\sum_{i=1}^n d_X(x_{i-1},x_i)
 $$
 hence for some $i\in\{1,...,n\}$ we have the inequality
 $$
 d_Y(f(x_{i-1}),f(x_i))>(\Lip(f)-\delta)d_X(x_{i-1},x_i)
 $$
 and $0<d_X(x_{i-1},x_i)<\varepsilon$. Because $\delta$ was arbitrary, the inequality holds.
\end{proof}

Note that every complete Riemannian manifold is geodesic as a metric space. In particular, a $k$-dimensional simplex $\Delta^k$ is a geodesic space with diameter $\sqrt{2}$.

\begin{prop}
 Let $\sigma:\Delta^k\rightarrow M$ be a Lipschitz singular simplex, let $y\in\Delta^k$ and let $\sigma(y)=x\in M$. Then if $\Lip(\sigma)<\frac{C}{\sqrt{2}} <\frac{\pi}{\sqrt{2K}}$ then there exists a unique Lipschitz lift $\wt{\sigma}:\Delta^k\rightarrow V_x(C)$ of $\sigma$ (i.e. $\sigma = p_x\circ\wt{\sigma}$) such that $\wt{\sigma}(y)=\bar{x}$. This lift satisfies also $\Lip(\wt{\sigma})=\Lip(\sigma)$
 \end{prop}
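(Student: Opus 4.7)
The plan is to define $\wt\sigma$ pointwise by lifting canonical paths emanating from $y$, verify continuity, and then deduce the Lipschitz equality from Lemma~\ref{lemma_geodesic_lip}.

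\textbf{Construction of the lift.} For each $z\in\Delta^k$ I would take the straight segment $\eta_z(t)=(1-t)y+tz$, $t\in[0,1]$, of length $d_{\Delta^k}(y,z)\leq\sqrt 2$, and consider $\alpha_z:=\sigma\circ\eta_z$ in $M$ from $x$ to $\sigma(z)$, which has length at most $\Lip(\sigma)\sqrt 2<C$. Using that $p_x$ is a local isometry, lift $\alpha_z$ to a curve $\wt\alpha_z$ in $V_x(C)$ starting at $\bar x$ on a maximal right-open subinterval: the triangle inequality gives $d_{V_x}(\bar x,\wt\alpha_z(t))\leq\text{length}(\wt\alpha_z|_{[0,t]})=\text{length}(\alpha_z|_{[0,t]})<C$, so the lift remains in $V_x(C)=B_{V_x}(\bar x,C)$, and moreover the partial lift is Cauchy at the right endpoint because $\alpha_z$ has finite length. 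Completeness of closed balls $\overline{B_{V_x}(\bar x,r)}$ for $r<\pi/\sqrt K$ (recorded right after the definition of $V_x$) provides a continuous extension, contradicting maximality unless the lift is defined on all of $[0,1]$. Uniqueness is immediate from $p_x$ being a local homeomorphism. Then set $\wt\sigma(z):=\wt\alpha_z(1)$; since $\alpha_y$ is constant at $x$, this gives $\wt\sigma(y)=\bar x$.

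\textbf{Continuity.} To show $\wt\sigma$ is continuous at $z_0\in\Delta^k$, I would use the one-parameter family of segments $\eta_{z_s}$ with $z_s=(1-s)z_0+sz$ for $s\in[0,1]$. Each $\alpha_{z_s}$ has length $<C$, so all lift by the above. Continuous dependence of lift endpoints on a continuously varying family of uniformly short base paths is a standard consequence of uniqueness of lifts and the local-homeomorphism property of $p_x$, combined with the fact that every lift stays inside the complete closed ball $\overline{B_{V_x}(\bar x,\Lip(\sigma)\sqrt 2)}$. Hence $s\mapsto\wt\sigma(z_s)$ is continuous, and $s=1$ gives continuity at $z_0$.

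\textbf{Lipschitz equality.} Any path in $V_x(C)$ projects via $p_x$ to a path in $M$ of equal length, so $p_x$ is $1$-Lipschitz between length spaces, and the factorization $\sigma=p_x\circ\wt\sigma$ yields $\Lip(\sigma)\leq\Lip(\wt\sigma)$. For the reverse, around every $w\in V_x(C)$ the local isometry $p_x$ restricts to a distance-preserving embedding on a small ball $B_{V_x}(w,r_w)$ (take $r_w$ less than both $\InjRad_{V_x}(w)$ and $\tfrac12\InjRad_M(p_x(w))$, and lift the unique minimizing geodesic in $M$). By compactness of $\wt\sigma(\Delta^k)\subset V_x(C)$ and positivity of the relevant injectivity radii, a uniform $r>0$ exists; uniform continuity of $\wt\sigma$ then yields $\varepsilon>0$ such that $d_{\Delta^k}(z_1,z_2)<\varepsilon$ forces $d_{V_x}(\wt\sigma(z_1),\wt\sigma(z_2))<r$. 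For such pairs,
$$d_{V_x}(\wt\sigma(z_1),\wt\sigma(z_2))=d_M(\sigma(z_1),\sigma(z_2))\leq\Lip(\sigma)\,d_{\Delta^k}(z_1,z_2),$$
and Lemma~\ref{lemma_geodesic_lip} gives $\Lip(\wt\sigma)\leq\Lip(\sigma)$.

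The step I expect to be most delicate is the continuous-dependence claim used for continuity: since $V_x(C)$ is not globally complete one cannot invoke the usual covering-space machinery directly, but the uniform length bound $\Lip(\sigma)\sqrt 2<C$ confines every lift to a complete closed subball, which is what allows the standard local-homeomorphism reasoning to go through.
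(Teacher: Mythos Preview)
Your proposal is correct and follows essentially the same route as the paper: lift $\sigma$ along radial segments from $y$ via a maximality argument confined to a complete closed subball, obtain continuity from the continuous dependence of path-lifts through the local homeomorphism $p_x$, and deduce $\Lip(\wt\sigma)=\Lip(\sigma)$ from Lemma~\ref{lemma_geodesic_lip} together with $p_x$ being a local isometry. The only cosmetic difference is that the paper dispatches the Lipschitz equality in one line (local isometry $\Rightarrow$ the local distance ratios agree), whereas you spell out a compactness/uniform-radius argument to the same effect.
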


\begin{proof}
Let $z\in \Delta^k$ and let $I_z:[0,1]\rightarrow \Delta^k$ be a (rescaled) interval connecting $y$ and $z$, that is $I_z(t)=(1-t)y+tz$. Let also $\gamma_z=\sigma\circ I_z$. We claim that we can construct a unique path $\wt{\gamma}_z:[0,1]\rightarrow V_x(C)$ such that $p_x\circ\wt{\gamma}_z= \gamma_z$ and $\wt{\gamma}_z(0)=\bar{x}$. Let
$$
R = \sup\,\{r\in [0,1] \: :\: \text{ there exists a lift $\wt{\gamma}^r_z:[0,r]\rightarrow V_x(C)$ of $\gamma_z|_{[0,r]}$ such that $\wt{\gamma}^r_z(0)=\bar{x}$} \}.
$$
We claim that $R=1$. Note that if we have two lifts $\wt{\gamma}^s_z:[0,s]\rightarrow V_x$ and $\wt{\gamma}^t_z:[0,t]\rightarrow V_x$ for $0\leq s\leq t\leq1$ satisfying the above conditions then they need to agree on $[0,s]$ because the subset of $[0,s]$ where these two lifts agree is nonempty (because $\wt{\gamma}^s_z(0)=\wt{\gamma}^t_z(0)=\bar{x}$), open (because $p_x$ is a local diffeomorphism) and closed (because of the continuity of both lifts). Hence we can consider a (unique) union of such lifts $\wt{\gamma}^s_z$ for $s<R$ to obtain a lift $\wt{\gamma}'^R_z:[0,R)\rightarrow V_x$ of $\gamma_z|_{[0,R)}$ such that $\wt{\gamma}'^R_z(0)=\bar{x}$. To extend it continuously to a lift $\wt{\gamma}^R_z:[0,R]\rightarrow V_x(C)$ we need to check that 
$$
\sup_{t\in[0,R)}d_{V_x}(\bar{x},\wt{\gamma}'^{R}_z(t)) < C
$$
because then the limit $\lim_{t\rightarrow R}\wt{\gamma}'^R_z(t)$ exists in $V_x(C)$. Fix $0<t<R$ and consider a path $\wt{\gamma}_z^t=\wt{\gamma}'^R_z|[0,t]$. Note that because $p_x$ is a local isometry this path has the same length as $\gamma_z|[0,t]$. Using the fact that $\gamma_z=\sigma\circ I_z$ and that $\sigma$ is Lipschitz we have
$$
d_{V_x}(\bar{x},\wt{\gamma}^t_z(t)) = d_{V_x}(\wt{\gamma}^t_z(0),\wt{\gamma}^t_z(t)) \leq L(\wt{\gamma}^t_z) = L(\gamma_z|[0,t]) < (\frac{C}{\sqrt{2}}-\varepsilon) L(I_z) \leq C - \varepsilon
$$
for some sufficiently small $\varepsilon$ depending on $\sigma$, but neither on $z$ nor on $t$. Since $\wt{\gamma}^t_z(t) = \wt{\gamma}'^R_z(t)$ we have $\sup_{t\in[0,R)}d_{V_x}(\bar{x},\wt{\gamma}'^{R}_z(t))\leq C-\varepsilon < C$ so we can extend our lift to $\wt{\gamma}^R_z:[0,R]\rightarrow V_x(C)$. Finally, if $R<1$ we can use again the fact that $p_x$ is a local diffeomorphism (this time in the neighbourhood of $\wt{\gamma}^{R}_z(R)$) and extend $\wt{\gamma}^R_z$ to $\wt{\gamma}^{R'}_z$ for some $R'>R$, which contradicts the definition of $R$.

Because the choice of $\wt{\gamma}_z$ is unique we can define $\wt{\sigma}(z) = \wt{\gamma}_z(1)$. Moreover, we can once again use the fact that $p_x$ is a local diffeomorphism and that $[0,1]$ is compact to conclude that $\wt{\gamma}_z$ depends continuously on $z$ in the compact-open topology, hence $\wt{\sigma}$ as a map $\Delta\rightarrow V_x(C)$ is continuous.

The last claim to verify is the equality $\Lip(\wt{\sigma})=\Lip(\sigma)$. Note that $\Delta^k$ is a geodesic metric space, hence the Lipschitz constants of $\sigma$ and $\wt{\sigma}$ can be computed locally as in Lemma \ref{lemma_geodesic_lip}. But $p_x\circ\wt{\sigma}=\sigma$ and $p_x$ is a local isometry, hence these 'local' Lipschitz constants are the same. 
\end{proof}

By combining the above proposition with Proposition \ref{transition_prop} we obtain very useful corollary.

\begin{cor}\label{simplex_lift_cor}
 Let $\sigma:\Delta^k\rightarrow M$ be a Lipschitz singular simplex such that $\sigma(\Delta^k)\subset B_M(x,\injr{4})$. Then if $\Lip(\sigma)<\frac{C}{\sqrt{2}} <\frac{\pi}{4\sqrt{2K}}$ then there exists a Lipschitz lift $\wt{\sigma}:\Delta^k\rightarrow V_x$ of $\sigma$ (i.e. $p_x\circ\wt{\sigma} = \sigma$) with $\Lip(\wt{\sigma})=\Lip(\sigma)$.
 
 Moreover, if $y\in\Delta^k$ then the lift is unique up to the choice of $\wt{\sigma}(y)$ which can be chosen to be any point $\wt{y}\in V_x(\injr{4})$ such that $p_x\circ\wt{\sigma}(\wt{y})=y$. We have then $\wt{\sigma}(\Delta^k)\subset B_{V_x}(\wt{y},C)$.
\end{cor}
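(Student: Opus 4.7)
The plan is to reduce the corollary to the previous proposition by composing with a transition map from Proposition \ref{transition_prop}. First, fix $y\in\Delta^k$ and a lift $\wt{y}\in V_x(\injr{4})$ of $\sigma(y)$. Since $\sigma(\Delta^k)\subset B_M(x,\injr{4})$ forces $d_M(x,\sigma(y))<\injr{4}$, Proposition \ref{transition_prop} supplies a locally isometric diffeomorphism $I_{\wt{y},x}\colon V_{\sigma(y)}(\injr{4})\to B_{V_x}(\wt{y},\injr{4})$ sending the distinguished point $\overline{\sigma(y)}$ to $\wt{y}$ and satisfying $p_x\circ I_{\wt{y},x}=p_{\sigma(y)}$.

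Next, I apply the previous proposition at $y$ with base point $\sigma(y)$: since $\Lip(\sigma)<C/\sqrt{2}<\pi/\sqrt{2K}$, there exists a unique Lipschitz lift $\wt{\sigma}'\colon\Delta^k\to V_{\sigma(y)}(C)$ of $\sigma$ through $p_{\sigma(y)}$ with $\wt{\sigma}'(y)=\overline{\sigma(y)}$ and $\Lip(\wt{\sigma}')=\Lip(\sigma)$. Because $C<\injr{4}$, the image of $\wt{\sigma}'$ lies in the domain of $I_{\wt{y},x}$, so I define
$$
\wt{\sigma}:=I_{\wt{y},x}\circ\wt{\sigma}'\colon\Delta^k\to V_x.
$$
The commutative diagram of Proposition \ref{transition_prop} yields $p_x\circ\wt{\sigma}=p_{\sigma(y)}\circ\wt{\sigma}'=\sigma$, and $\wt{\sigma}(y)=\wt{y}$ by construction. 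Since $p_x$ is a local isometry and $\Delta^k$ is geodesic with diameter $\sqrt{2}$, Lemma \ref{lemma_geodesic_lip} applied to $p_x\circ\wt{\sigma}=\sigma$ gives $\Lip(\wt{\sigma})=\Lip(\sigma)$.

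The image containment $\wt{\sigma}(\Delta^k)\subset B_{V_x}(\wt{y},C)$ then follows directly from the Lipschitz bound: for every $z\in\Delta^k$,
$$
d_{V_x}\bigl(\wt{y},\wt{\sigma}(z)\bigr)\leq\Lip(\wt{\sigma})\cdot d_{\Delta^k}(y,z)<\frac{C}{\sqrt{2}}\cdot\sqrt{2}=C.
$$
The same inequality underpins uniqueness: any other Lipschitz lift $\tau\colon\Delta^k\to V_x$ with $\tau(y)=\wt{y}$ automatically has $\tau(\Delta^k)\subset B_{V_x}(\wt{y},\injr{4})$, hence factors as $\tau=I_{\wt{y},x}\circ(I_{\wt{y},x}^{-1}\circ\tau)$ with $I_{\wt{y},x}^{-1}\circ\tau$ a Lipschitz lift through $p_{\sigma(y)}$ sending $y$ to $\overline{\sigma(y)}$. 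The uniqueness clause of the previous proposition (applied with radius $\injr{4}$ in place of $C$, which is permitted since $\Lip(\sigma)<\pi/(4\sqrt{2K})$) forces $I_{\wt{y},x}^{-1}\circ\tau=\wt{\sigma}'$, and therefore $\tau=\wt{\sigma}$.

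The proof itself is essentially bookkeeping; the main obstacle is merely arithmetic, namely checking that the three relevant radii ($C$, the Lipschitz-constant bound $C/\sqrt{2}$, and the transition-map radius $\injr{4}$) fit together so that the image of $\wt{\sigma}'$ really lies in the domain of $I_{\wt{y},x}$ and so that every competing lift is confined to $B_{V_x}(\wt{y},\injr{4})$. Once this is in place, the corollary is a clean composition of the previous proposition with the transition-map machinery.
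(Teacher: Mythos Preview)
Your proof is correct and follows exactly the approach the paper indicates: the paper simply states that the corollary is obtained ``by combining the above proposition with Proposition~\ref{transition_prop}'' without giving details, and your argument carries out precisely this combination---lift to $V_{\sigma(y)}(C)$ via the previous proposition, then transport to $V_x$ via the transition map $I_{\wt{y},x}$. Your handling of the uniqueness clause (pulling a competing lift back through $I_{\wt{y},x}^{-1}$ and invoking uniqueness in $V_{\sigma(y)}(\injr{4})$) and the bookkeeping of radii are both sound.
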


\subsection{Straight simplices and homotopies}\label{SSAH}
As before, we will assume that $M$ is connected complete $n$-dimensional Riemannian manifold with $\sec(M)<K$, $K>0$ and $x\in M$. Let $y,z\in V_x$ be two points such that $y,z\in V_x(\injr{8})$. By Proposition \ref{inj_rad_prop} there exists a unique shortest geodesic joining them (depending continuously on both endpoints) which we denote by $[y,z]$. Following \cite{LS}, we can define the \emph{geodesic join} of two maps $f,g:X\rightarrow V_x$.

\begin{defi}
 Let $f,g:Y\rightarrow V_x$ be two maps such that $(\im(f)\cup \im(g))\subset V_x(\injr{8})$. Then there exists a unique homotopy $[f,g]:Y\times [0,1]\rightarrow V_x$ defined by $(y,t)\mapsto [f(y),g(y)](t)$ called a \emph{geodesic join} of $f$ and $g$.
\end{defi}

We will often use the following lemma.

\begin{lemma}\label{joinlemma}
 Let $f,g:Y\rightarrow V_x$ be two maps such that $\im(f)\subset V_x(R_1)$ and $\im(g)\subset V_x(R_2)$ for $R_1,R_2<\injr{8}$. Then $\im([f,g])\subset V_x(R_1+R_2)$.
\end{lemma}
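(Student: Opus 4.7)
The plan is to argue directly from the triangle inequality, applied symmetrically from both endpoints of the connecting geodesic. For $y\in Y$ write $p:=f(y)$, $q:=g(y)$, and $\gamma_y(t):=[p,q](t)$.

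First I would check that $\gamma_y$ is well defined under the hypotheses. Since $R_1,R_2<\injr{8}$ we have $p,q\in V_x(\injr{4})$, and Proposition \ref{inj_rad_prop} gives injectivity radius in $V_x$ at least $\injr{4}$ at each of them; together with the estimate $d_{V_x}(p,q)\leq d_{V_x}(\bar{x},p)+d_{V_x}(\bar{x},q)<R_1+R_2<\injr{4}$, the unique minimizing geodesic from $p$ to $q$ exists and depends continuously on $y$.

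The main step is a double use of the triangle inequality. Letting $d:=d_{V_x}(p,q)$, the constant-speed property of $\gamma_y$ gives $d_{V_x}(p,\gamma_y(t))=td$ and $d_{V_x}(q,\gamma_y(t))=(1-t)d$, hence
$$
d_{V_x}(\bar{x},\gamma_y(t))\leq d_{V_x}(\bar{x},p)+td,
\qquad
d_{V_x}(\bar{x},\gamma_y(t))\leq d_{V_x}(\bar{x},q)+(1-t)d.
$$
Averaging these two inequalities eliminates the $t$-dependence:
$$
d_{V_x}(\bar{x},\gamma_y(t))\leq \tfrac{1}{2}\bigl(d_{V_x}(\bar{x},p)+d_{V_x}(\bar{x},q)+d\bigr),
$$
and combining with $d<R_1+R_2$ yields $d_{V_x}(\bar{x},\gamma_y(t))<R_1+R_2$, which is exactly $\gamma_y(t)\in V_x(R_1+R_2)$.

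There is no substantial obstacle here; the only point to flag is that the one-sided triangle inequality $d_{V_x}(\bar{x},\gamma_y(t))\leq d_{V_x}(\bar{x},p)+d$ already yields the cruder bound $2R_1+R_2$, so the symmetric use of both endpoints is essential to recover the claimed constant $R_1+R_2$.
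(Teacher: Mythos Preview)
Your proof is correct and uses essentially the same ingredients as the paper's: the additivity $d_{V_x}(p,\gamma_y(t))+d_{V_x}(\gamma_y(t),q)=d_{V_x}(p,q)$ along the minimizing geodesic, together with the bound $d_{V_x}(p,q)<R_1+R_2$ from the triangle inequality through $\bar{x}$. The only difference is packaging: the paper argues by contradiction (if $d_{V_x}(\bar{x},z)\geq R_1+R_2$ then both $d_{V_x}(z,p)>R_2$ and $d_{V_x}(z,q)>R_1$, forcing $d_{V_x}(p,q)>R_1+R_2$), whereas you give a direct estimate by averaging the two triangle inequalities; neither is more general or more elementary than the other.
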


\begin{proof}
 Suppose there is a point $z = [f,g](y,t)$ such that $d(\bar{x},z)\geq R_1+R_2$. Then
 $$
 d_{V_x}(z,f(y))\geq d_{V_x}(\bar{x},z)-d_{V_x}(\bar{x},f(y))\geq R_2
 $$ and similarly $d_{V_x}(z, g(y))\geq R_1$. Because $z$ is on the unique minimizing geodesic between $f(y)$ and $g(y)$, we have
 $$
 d_{V_x}(f(y),g(y)) = d_V(f(y),z)+d_V(z,g(y)) \geq R_1+R_2.
 $$
 On the other hand
 $$
 d_{V_x}(f(y),g(y)) \leq d_{V_x}(f(y),\bar{x})+d_{V_x}(\bar{x},g(y)) < R_1+R_2.
 $$
 The above contradiction shows that $z\in V_x(R_1+R_2)$.
\end{proof}

We can consequently define geodesic simplices. Recall that as we identified the standard simplex $\Delta^k$ with the subset $\{(z_0,...,z_k)\in\mathbb{R}^{k+1}_{\geq 0}\;:\;\sum_{i=0}^kz_i=1\}$, we can identify $\Delta^{k-1}$ with the subset $\{(z_0,...,z_k)\in\Delta^k\;:\; z_k=0\}$.

\begin{defi}
 The \emph{geodesic simplex} $[x_0,...,x_k]:\Delta^k\rightarrow V_x$ with vertices $x_0,...,x_k\in V_x(\injr{8k})$ is defined inductively by the formulas
 \begin{itemize}
  \item $[x_0](\Delta^0) = \{x_0\}\subset V_x$;
  \item $[x_0,...,x_k]((1-t)s+t(0,...,0,1))=[[x_0,...,x_{k-1}](s),x_k](t)$ for $s\in\Delta^{k-1}$  
 \end{itemize}
\end{defi}

To prove that the definition is correct it is enough to prove the following lemma.

\begin{lemma}\label{diam}
Let $k\in\mathbb{N}$ and $R<\injr{8k}$. If $x_0,...,x_k\in V_x(R)$ then $[x_0,...,x_k]$ exists and
$$
[x_0,...,x_k](\Delta^k) \subset V_x((k+1)R).
$$
\end{lemma}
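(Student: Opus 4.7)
The plan is to prove the lemma by induction on $k$, leveraging the recursive definition of a geodesic simplex together with Lemma \ref{joinlemma}. The base case $k = 0$ is immediate since $[x_0](\Delta^0) = \{x_0\} \subset V_x(R)$ directly from the definition, and no radius constraint is needed.

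For the inductive step, I would assume the statement for $k-1$ and consider $x_0,\ldots,x_k \in V_x(R)$ with $R < \injr{8k}$. This constraint is at least as strong as the one required at the previous stage, so the inductive hypothesis applies to $x_0,\ldots,x_{k-1}$ and yields the existence of $[x_0,\ldots,x_{k-1}]$ with image contained in $V_x(kR)$. By the recursive clause in the definition, $[x_0,\ldots,x_k]$ is then the geodesic join of $[x_0,\ldots,x_{k-1}]$ with the constant map at $x_k$. To confirm that this join is defined, I would verify that both images sit inside $V_x(\injr{8})$: the image of $[x_0,\ldots,x_{k-1}]$ lies in $V_x(kR) \subset V_x(\injr{8})$ because $kR < \injr{8}$ by the hypothesis on $R$, and the constant image $\{x_k\}$ lies in $V_x(R) \subset V_x(\injr{8})$ trivially. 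Applying Lemma \ref{joinlemma} with $R_1 = kR$ and $R_2 = R$ then produces the desired inclusion $[x_0,\ldots,x_k](\Delta^k) \subset V_x((k+1)R)$, closing the induction.

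There is no real obstacle here; the argument is essentially bookkeeping, and the specific coefficient $\tfrac{1}{8k}$ in the hypothesis is calibrated precisely so that at stage $k$ the image of the partial geodesic simplex fits into the region $V_x(\injr{8})$, where the uniqueness of minimizing geodesics supplied by Proposition \ref{inj_rad_prop} makes the final geodesic join well-defined.
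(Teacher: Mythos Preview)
Your proof is correct and follows essentially the same approach as the paper: induction on $k$, with the inductive step realizing $[x_0,\ldots,x_k]$ as the geodesic join of $[x_0,\ldots,x_{k-1}]$ with the constant map at $x_k$ and invoking Lemma~\ref{joinlemma} with $R_1=kR$, $R_2=R$. Your verification that both images lie in $V_x(\injr{8})$ before applying the join is slightly more explicit than the paper's, but the argument is the same.
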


\begin{proof}
We prove the statement by induction. For $k=0$ the existence of a geodesic simplex is obvious and does not require any metric assumptions. For $k>0$ $[x_0,...,x_{k-1}]$ exists by induction hypothesis and $[x_0,...,x_{k-1}]\subset V_x(kR)\subset V_x(\injr{8})$. Consider the geodesic join of maps $[x_0,...,x_{k-1}]:\Delta^{k-1}\rightarrow V_x$ and a constant map sending $\Delta^{k-1}$ to the point $x_k$. Obviously this join has the same image in $V_x$ as $[x_0,...,x_k]$. By Lemma \ref{joinlemma} we get
\begin{eqnarray*}
[x_0,...,x_k](\Delta^k) = [[x_0,...,x_{k-1}],\{x_k\}](\Delta^k) \subset V_x(kR+ R) = V_x((k+1)R) 
\end{eqnarray*}
\end{proof}

The most important fact in this section is a positive curvature analogue of Proposition 2.1 in \cite{LS}.

\begin{prop}\label{Lipschitz_prop}
 Let $Y$ be a compact, smooth manifold (possibly with boundary) and let $f,g:Y\rightarrow V_x$ be two Lipschitz maps such that $(\im(f)\cup \im(g))\subset V_x(C_K)$, where $C_K<\injr{8}$ is a constant depending only on $K$. Then $[f,g]$ has Lipschitz constant depending only on $K$ and the Lipschitz constants for $f$ and $g$. Moreover, $[f,g]$ is smooth ($C^1$) if $f$ and $g$ are smooth ($C^1$).
\end{prop}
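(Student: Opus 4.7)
The plan is to write the geodesic join as a composition $[f,g] = G \circ \Phi$, where
$$
\Phi \colon Y \times [0,1] \to V_x(C_K) \times V_x(C_K) \times [0,1], \qquad \Phi(y,t) = (f(y), g(y), t),
$$
and $G \colon V_x(C_K) \times V_x(C_K) \times [0,1] \to V_x$ is the geodesic interpolation map $G(u,v,t) := [u,v](t)$. Take $C_K$ strictly below $\injr{8}$, say $C_K = \injr{16}$, so that any two points in $V_x(C_K)$ lie at distance less than $2 C_K < \injr{4}$; by Proposition \ref{inj_rad_prop} this is below the injectivity radius at either point. Consequently $\exp_u^{-1}(v)$ is well-defined and jointly smooth in $(u,v)$, and one has the explicit formula $G(u,v,t) = \exp_u\bigl(t \cdot \exp_u^{-1}(v)\bigr)$.

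Since $\Phi$ is Lipschitz with constant bounded by $\Lip(f) + \Lip(g) + 1$, it suffices to show that $G$ is Lipschitz on its domain with a constant $L_K$ depending only on $K$. To this end I would bound each partial derivative uniformly. The $t$-derivative is the geodesic velocity $\dot\gamma(t)$ of length $d(u,v) \leq 2 C_K$. The $v$-derivative applied to $\eta \in T_v V_x$ equals $J(t)$, where $J$ is the Jacobi field along $\gamma := [u,v]$ with $J(0) = 0$ and $J(1) = \eta$; analogously for $\partial_u G$ with boundary values swapped. Because $\gamma$ has length less than $\injr{2}$ in a space with sectional curvature bounded above by $K$, it contains no conjugate points, so the Jacobi boundary-value problem is well-posed and the solution operator $(\xi,\eta) \mapsto J$ has norm bounded by a constant depending only on $K$, via the Rauch--Berger comparison theorem. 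Integrating the resulting pointwise bound on $|DG|$ (for instance via Lemma \ref{lemma_geodesic_lip} applied to the geodesically convex ball $V_x(C_K)$) yields $\Lip(G) \leq L_K$, and the composition gives $\Lip([f,g]) \leq L_K \cdot \bigl(\Lip(f) + \Lip(g) + 1\bigr)$.

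The regularity claim is immediate: $G$ is smooth as a composition of smooth maps ($\exp$ and its inverse on balls strictly inside the injectivity radius), so the regularity of $[f,g] = G \circ \Phi$ matches that of $\Phi$, which in turn matches the regularity of $f$ and $g$.

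The main obstacle is securing the Jacobi field bound with a constant depending only on $K$, in the absence of any lower curvature bound. The key point is that in our small-scale regime (geodesic length bounded by $2 C_K$, itself controlled by $K$) the Jacobi boundary-value problem admits a universal estimate, with the worst case realized in the constant-curvature-$K$ comparison model. A cleaner alternative is to invoke the CAT($K$) inequality on $V_x(C_K)$ (which is CAT($K$) by the Cartan--Alexandrov--Toponogov theorem), giving directly $d(G(u,v,t), G(u',v',t)) \leq L_K \bigl(d(u,u') + d(v,v')\bigr)$ for $u,v,u',v' \in V_x(C_K)$ and then separately controlling the $t$-direction by $2C_K$.
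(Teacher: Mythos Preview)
Your argument is correct and the smoothness part is identical to the paper's. The Lipschitz part, however, is organised differently. The paper splits $d([f,g](y,t),[f,g](y',t'))$ by the triangle inequality, bounds the $t$-variation by the diameter $2C_K$, and reduces the $y$-variation to the triangle estimate of Lemma~\ref{curvature_inequality}: $d([x_0,x_2](t),[x_1,x_2](t))\le D_K\,d(x_0,x_1)$. That lemma is proved by a trick you did not use: the possibly tiny segment $[x_0,x_1]$ is extended to a geodesic of fixed length $\injr{24}$, so that the resulting triangle $[x'_0,x'_1,x_2]$ has definite size; then Lemma~\ref{Loeh_positive} (quoted from \cite{LS}) gives a uniform bound $L$ on $\|T\sigma\|$ for the geodesic $2$-simplex map, from which $D_K$ is read off. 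This avoids any direct Jacobi or $\mathrm{CAT}(K)$ analysis by packaging the comparison geometry into the cited black box on geodesic simplices.

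Your route---bounding $DG$ via endpoint Jacobi fields, or invoking the $\mathrm{CAT}(K)$ inequality---is the more standard differential-geometric argument and is valid. The ``obstacle'' you flag is not a real gap: for a Jacobi field with $J(0)=0$ along a geodesic of length $\ell<\pi/\sqrt{K}$ in a space with $\sec\le K$, Rauch gives that $t\mapsto |J(t)|/\sin(\sqrt{K}\,t)$ is non-decreasing, hence $|J(t)|\le |J(\ell)|\cdot \sin(\sqrt{K}\,t)/\sin(\sqrt{K}\,\ell)$, which is exactly the constant-curvature-$K$ comparison you assert; the case $J(\ell)=0$ is symmetric. One small caveat: $V_x(C_K)$ is not obviously geodesically convex, so your appeal to Lemma~\ref{lemma_geodesic_lip} should be made on the domain $Y\times[0,1]$ (which \emph{is} geodesic) after composing, rather than on $V_x(C_K)^2\times[0,1]$; alternatively, shrink $C_K$ so that $V_x(C_K)$ is convex. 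The paper's choice $C_K=\injr{48}$ is driven by the extension trick in Lemma~\ref{curvature_inequality}; your $C_K=\injr{16}$ suffices for your argument.
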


To proceed, we need two technical lemmas concerning Riemannian geometry. First is the technical result proved in \cite{LS}, which can be easily applied in our situation.

\begin{lemma}[{\cite[Proposition 2.6]{LS}}]\label{Loeh_positive}
Let $V$ be a complete simply connected Riemannian manifold with $\sec(V)<K$, $K>0$. Then every geodesic simplex $\sigma$ in $V$ such that $\diam(\sigma)<\injr{2}$ is smooth. Further, there is a constant $L>0$ such that every geodesic $k$-simplex $\sigma$ of diameter less than $\injr{4}$ satisfies $\|T_x\sigma\|<L$ for every $x\in\Delta^n$.
\end{lemma}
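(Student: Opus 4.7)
The plan is to prove both claims by induction on the simplex dimension $k$, exploiting the inductive definition of the geodesic simplex as a geodesic join with the last vertex. The base case $k=0$ is trivial: a $0$-simplex is a constant map, which is smooth with zero tangent map. For the inductive step, assume the statement for $(k-1)$-simplices, and observe that by Lemma \ref{diam}, the image of $[x_0,\ldots,x_{k-1}]$ lies in $V_x(kR)$ for $R$ equal to the diameter bound, so in particular every point on this subsimplex has distance from $x_k$ strictly less than $\pi/\sqrt{K}$. This is the key geometric fact that enables us to invert the exponential map uniformly along the subsimplex.

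The first claim (smoothness) follows by writing the simplex explicitly as
$$
[x_0,\ldots,x_k]\bigl((1-t)s + t e_k\bigr) = \exp_{[x_0,\ldots,x_{k-1}](s)}\!\Bigl(t\cdot \exp_{[x_0,\ldots,x_{k-1}](s)}^{-1}(x_k)\Bigr).
$$
Since on a simply connected complete manifold with $\sec<K$ the exponential map at every point is an immersion on the ball of radius $\pi/\sqrt{K}$, and since (combined with simple connectedness) it is in fact a diffeomorphism at the relevant scale when its target point stays within distance $<\pi/\sqrt{K}$, the inverse $\exp_p^{-1}(q)$ depends smoothly on the pair $(p,q)$ throughout the range relevant to us. Composing with the (inductively smooth) map $s \mapsto [x_0,\ldots,x_{k-1}](s)$ gives smoothness of the $k$-simplex. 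One has to check that the standard parametrization of $\Delta^k$ used in the definition of the geodesic join extends smoothly across the apex $e_k$, but this is a standard computation: the formula $(s,t) \mapsto \exp_p(tv)$ is smooth jointly in $(s,t)$ even as $t\to 0$.

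For the second claim (the uniform bound $L$ on $\|T_x\sigma\|$), I would feed the explicit formula above into the chain rule and bound each factor by Rauch's comparison theorem. The derivative $d\exp_p$ is uniformly bounded (above) on balls of radius $<\pi/\sqrt{K}$ in terms of $K$ and the radius, by Rauch~I applied against the sphere of curvature $K$; symmetrically, a lower bound on $\|d\exp_p(v)\|$ for $\|v\|$ uniformly strictly less than $\pi/\sqrt{K}$ yields a uniform upper bound on $\|d\exp_p^{-1}\|$ on the appropriate ball. The diameter hypothesis $<\pi/(4\sqrt{K})$, together with Lemma \ref{diam} giving $[x_0,\ldots,x_{k-1}](\Delta^{k-1}) \subset V_x(kR)$, is precisely what keeps us uniformly away from the conjugate locus at every stage. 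Combining these bounds with the inductive bound on $\|T[x_0,\ldots,x_{k-1}]\|$ and on $\|T(s \mapsto \exp_{[x_0,\ldots,x_{k-1}](s)}^{-1}(x_k))\|$ produces a constant $L = L(K,k)$ as required.

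The main obstacle is the derivative-of-inverse-exponential estimate: naively the recursion could compound badly in $k$, and one needs the quantitative form of Rauch's theorem to show that the comparison constants depend only on $K$ and on an explicit upper bound for the distances involved, not on the particular simplex. The diameter bound in the statement is tuned precisely so that at every recursive level the distances $d(p,q)$ appearing stay strictly less than the radius on which the comparison estimates are uniform, so the accumulated constant after $k$ steps is finite and depends only on $K$ and $k$.
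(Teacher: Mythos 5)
The paper does not prove this lemma itself; it imports it verbatim from L\"oh--Sauer as their Proposition~2.6, so there is no in-paper argument to measure your proposal against. Your inductive, cone-style sketch is the natural strategy and is presumably close in spirit to the cited reference.

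There is, however, a genuine gap in the step you dismiss as ``a standard computation'', and it sits exactly at the apex $e_k$ -- which, as an aside, corresponds to $t\to 1$, not $t\to 0$, so your closing remark about $(s,t)\mapsto\exp_p(tv)$ being smooth as $t\to 0$ addresses the opposite face rather than the vertex in question. In barycentric coordinates the simplex near $x_k$ reads
$$
z \longmapsto \exp_{x_k}\!\Bigl((1-z_k)\,w\bigl(\tfrac{z_0}{1-z_k},\ldots,\tfrac{z_{k-1}}{1-z_k}\bigr)\Bigr),
$$
where $w=\exp_{x_k}^{-1}\circ[x_0,\ldots,x_{k-1}]:\Delta^{k-1}\to T_{x_k}V$ is the lift of the opposite face. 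The inner map is the cone on $w$ with apex $0\in T_{x_k}V$, and a cone is differentiable at its apex only when the underlying map is affine: the one-sided directional derivative at $e_k$ in the direction $\sum_{j<k}s_j(e_j-e_k)$ equals $w(s)$, and these values fit a single linear map precisely when $w(s)=\sum_j s_j\,w(e_j)$. In curved geometry $w$ is not affine, so the naive change of variables $z\mapsto(s,t)=(z'/(1-z_k),z_k)$ is singular in a way that genuinely matters; an honest proof must address this directly, and your sketch does not. (Separately, your upgrade of $\exp_p$ from immersion to diffeomorphism on the $\pi/\sqrt K$-ball ``by simple connectedness'' is the generalized Cartan--Hadamard / CAT$(K)$ theorem and should be cited rather than asserted, though it is standard.)
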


\begin{lemma}\label{curvature_inequality}
  Consider the geodesic triangle $[x_0,x_1,x_2]$ in $V_x$ such that $x_0,x_1,x_2\in V_x(\injr{48})$. Then there exists a constant $D_K$, depending only on the curvature bound $K$, such that for any $t\in[0,1]$
  $$
  d_{V_x}([x_0,x_2](t),[x_1,x_2](t))\leq D_K d_{V_x}(x_0,x_1)
  $$
\end{lemma}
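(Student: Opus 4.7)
The natural approach is the CAT($K$) comparison inequality, because the quantity we want to bound is exactly the kind that the CAT($K$) axiom controls (the distance between two points on two sides of a geodesic triangle, meeting at a common vertex).

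First I would verify that the hypotheses for CAT($K$) apply to our triangle. Since $x_0, x_1, x_2 \in V_x(\pi/(48\sqrt{K}))$, Lemma \ref{diam} (or a direct application of Lemma \ref{joinlemma} to the three pairwise joins) shows that all three sides $[x_i, x_j]$ are contained in $V_x(\pi/(16\sqrt{K}))$; Proposition \ref{inj_rad_prop} then guarantees that every point of the triangle has injectivity radius at least $\pi/(4\sqrt{K})$, so all relevant geodesics are unique minimizers and the perimeter is well below $2\pi/\sqrt{K}$. The standard CAT($K$) comparison theorem (for a Riemannian region with $\sec<K$, unique geodesics, and perimeter $<2\pi/\sqrt{K}$) then applies: if $[\wt{x}_0, \wt{x}_1, \wt{x}_2]$ is a comparison triangle in the model space of constant curvature $K$ (the sphere of radius $1/\sqrt{K}$), then for every $t\in[0,1]$
\[
  d_{V_x}\bigl([x_0,x_2](t),[x_1,x_2](t)\bigr) \;\leq\; d_{S^2_K}\bigl([\wt{x}_0,\wt{x}_2](t),[\wt{x}_1,\wt{x}_2](t)\bigr).
\]

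Next I would compute the right hand side explicitly via spherical trigonometry. Setting $\kappa=\sqrt{K}$, $r_i = d(x_i,x_2)$, and $\theta$ the angle at $\wt{x}_2$ of the comparison triangle, the spherical law of cosines gives
\[
  \cos(\kappa d_t) = \cos(\kappa(1-t)r_0)\cos(\kappa(1-t)r_1) + \sin(\kappa(1-t)r_0)\sin(\kappa(1-t)r_1)\cos\theta,
\]
which, after the half-angle manipulation $1-\cos=2\sin^2(\cdot/2)$ and $\cos\theta=1-2\sin^2(\theta/2)$, rearranges to
\[
  \sin^2\!\Bigl(\tfrac{\kappa d_t}{2}\Bigr) = \sin^2\!\Bigl(\tfrac{\kappa(1-t)(r_0-r_1)}{2}\Bigr) + \sin(\kappa(1-t)r_0)\sin(\kappa(1-t)r_1)\sin^2(\theta/2).
\]
Putting $t=0$ yields the analogous identity for $d_0 = d_{V_x}(x_0,x_1)$ with $(1-t)$ replaced by $1$.

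Finally I would extract the Lipschitz-type estimate. All arguments $\kappa r_i, \kappa|r_0-r_1|/2$ appearing on the right hand side lie in $[0,\pi/48]$ by the hypothesis on the radii, so on this range both $\sin$ and $\arcsin$ are comparable to their arguments up to absolute constants: $\sin(\kappa(1-t)u)\leq \kappa(1-t)u \leq (\pi/2)\sin(\kappa u)$ whenever $\kappa u<\pi/2$, and $d_t\leq (\pi/2) \cdot (2/\kappa)\sin(\kappa d_t/2)$ by the elementary inequality $\arcsin(x)\leq (\pi/2)x$ on $[0,1]$. Plugging these into the identity above bounds $\sin^2(\kappa d_t/2)$ by an absolute constant times $\sin^2(\kappa d_0/2)$, and then bounds $d_t$ by $D_K\cdot d_0$ for some explicit $D_K$ (in fact one can take $D_K$ independent of $K$, though the statement only asks for dependence on $K$).

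The main obstacle is really only a bookkeeping one: making sure the CAT($K$) comparison is applied in a region we have already verified is geodesically convex with the correct injectivity radius, which is exactly what the infrastructure of Section~\ref{EN} has been set up to supply. Once that is in place, the rest is just spherical trigonometry on arguments small enough that all the relevant trigonometric functions are well-approximated by their linearizations.
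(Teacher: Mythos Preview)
Your argument is correct and takes a genuinely different route from the paper. The paper does \emph{not} invoke CAT($K$) comparison at all. Instead it extends the short segment $[x_0,x_1]$ to a geodesic $[x'_0,x'_1]$ of \emph{fixed} length $\pi/(24\sqrt{K})$, and then applies Lemma~\ref{Loeh_positive} (the uniform Lipschitz bound $L$ on small geodesic simplices, quoted from \cite{LS}) to the larger triangle $[x'_0,x'_1,x_2]$. The point of the extension is that the parametrising map $\sigma:\Delta^2\to[x'_0,x'_1,x_2]$ restricted to the base edge is an affine reparametrisation of a geodesic of known length, so the preimages of $x_0$ and $x_1$ in $\Delta^2$ are at Euclidean distance $\sqrt{2}\,d_{V_x}(x_0,x_1)/(\pi/24\sqrt{K})$; since $[x_0,x_2]$ and $[x_1,x_2]$ are by construction images of straight segments in $\Delta^2$ ending at the apex, the desired bound follows from the Lipschitz constant $L$ of $\sigma$, giving $D_K=24L\sqrt{2K}/\pi$.

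What each approach buys: your CAT($K$) route is more self-contained in the sense that it bypasses Lemma~\ref{Loeh_positive} entirely and reduces directly to an elementary spherical computation, yielding an explicit constant (indeed $D_K=\pi^2/4$, independent of $K$, as you observe). The paper's route is economical in a different way: it reuses the black-box Lemma~\ref{Loeh_positive}, which is already needed elsewhere (for Proposition~\ref{Lipschitz_prop}), so no new comparison geometry is imported; the ``extend to fixed length'' trick is the only new idea. Both arguments ultimately rest on the same Rauch-type Jacobi field estimates, just packaged differently. One small caveat on your side: you should make explicit which version of the local CAT($K$) theorem you are citing, since $V_x$ is not complete; the cleanest justification is that the whole triangle sits inside a ball $B_{V_x}(\bar x,r)$ with $r<\pi/(2\sqrt{K})$ on which $\exp_{\bar x}$ is a diffeomorphism, and such balls are CAT($K$) when $\sec\leq K$.
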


\begin{proof}
If $x_0=x_1$ it is nothing to prove. If not, consider the extension (in any direction) of $[x_0,x_1]$ to a geodesic of length $\injr{24}$ and denote the endpoints of this geodesic by $x'_0, x'_1$. Such geodesic exists because $B_{V_x}(x_0,\injr{24})\subset V_x(\injr{8})$. Now consider the geodesic triangle $[x'_0,x'_1,x_2]$. Note that
$$
d_{V_x}(x'_0,\bar{x})\leq d_{V_x}(x'_0,x_0)+d_{V_x}(x_0,\bar{x}) < \injr{24} + \injr{48} = \injr{16}.
$$
similarly, $d_V(x'_1,\bar{x})< \injr{16}$, hence by Lemma \ref{diam} we have $[x'_0,x'_1,x_2]\subset V_x(\frac{3\pi}{16\sqrt{K}})$. We can therefore use Lemma \ref{Loeh_positive} to conclude that the diffeomorphic simplex map $\sigma:\Delta^2\rightarrow V_x$ from the standard 2-simplex onto $[x'_0,x'_1,x_2]$ is Lipschitz with constant $L$ independent of $\sigma$. Hence
\begin{eqnarray*}
d_{V_x}([x_0,x_2](t),[x_1,x_2](t)) & \leq & L\cdot d_{\Delta^2}(\sigma^{-1}([x_0,x_2](t)), \sigma^{-1}([x_1,x_2](t)) \\ 
 &\leq & L\cdot d_{\Delta^2}(\sigma^{-1}(x_0), \sigma^{-1}(x_1)) \\
 & = & L\sqrt{2}\frac{d_{V_x}(x_0,x_1)}{\pi/24\sqrt{K}}
\end{eqnarray*}
so one can take $D_K = \frac{24L\sqrt{2K}}{\pi}$.
\end{proof}

\begin{proof}[Proof of Proposition \ref{Lipschitz_prop}]
Put $C_K=\injr{48}$. To prove smoothness in the case $f$ and $g$ are smooth, one can rewrite $[f,g]$ as 
$$
[f,g](y,t) = \exp_{f(y)}(t\cdot \exp^{-1}_{f(y)}(g(y))),
$$
where we use the fact that by Proposition~\ref{inj_rad_prop} if $TV_\rho=\{(y,t)\in TV\;:\; y\in V_x(\rho),\, \|t\|<\rho\}$ then
\begin{eqnarray*}
\exp:TV_{\injr{4}} &\rightarrow& V_x\times V_x \\
\exp_x(t) = \exp(x,t) &\mapsto& (x, \exp_x(t))
\end{eqnarray*}
is a diffeomorphism onto its image.

Now, let $(y,t),(y',t')\in Y\times [0,1]$. We have
\begin{eqnarray*}
d_{V_x}([f,g](y,t),[f,g](y',t'))&\leq& d_{V_x}([f(y),g(y)](t),[f(y),g(y)](t')) \\ &+& d_{V_x}([f(y),g(y)](t'),[f(y'),g(y')](t'))
\end{eqnarray*}
The first term can be easily estimated as follows
$$
d_{V_x}([f(y),g(y)](t),[f(y),g(y)](t'))\leq |t-t'|\cdot d_{V_x}(f(y),g(y))\leq |t-t'|\cdot \diam(\im(f)\cup \im(g)).
$$
Recall that by assumption $(\im(f)\cup \im(g))\subset V_x(\injr{48})$. Therefore the second term can be estimated using Lemma \ref{curvature_inequality}
\begin{eqnarray*}
d_{V_x}([f(y),g(y)](t'),[f(y'),g(y')](t')) &\leq& d_{V_x}([f(y),g(y)](t'),[f(y),g(y')](t')) \\
&+& d_{V_x}([f(y),g(y')](t'),[f(y'),g(y')](t')) \\
&\leq& D_K(d_{V_x}(g(y),g(y'))+d_{V_x}(f(y),f(y')))\\
&\leq& D_K(\Lip(f)+\Lip(g))d_Y(y,y').
\end{eqnarray*}
Finally, we obtain
\begin{eqnarray*}
d_{V_x}([f,g](y,t),[f,g](y',t'))&\leq& 2|t-t'|C_K + D_K(\Lip(f)+\Lip(g))d_Y(y,y')\\
&\leq& (2C_K + D_K(\Lip(f)+\Lip(g)))d_{Y\times[0,1]}((y,t),(y',t'))
\end{eqnarray*}
\end{proof}

\begin{remark}
All the facts above could be stated (possibly with some minor changes in constants used) for any Riemannian manifold $V$ with $\sec(V)<K$ with a distinguished point $\bar{x}\in V$ such that the closure of an open ball $B_V(\bar{x},R)$ is complete for some $R$ and there exists $r<R$ such that every point in $B_V(\bar{x},r)$ has injectivity radius at least $\rho>0$. However, the only examples which are important to us at the moment are spaces $V_x$ for $x\in M$.
\end{remark}

\subsection{The piecewise straightening itself}\label{PSP}
Let $M$ be a complete, $n$-dimensional Riemannian manifold with $\sec(M)<K$, $0<K<\infty$ and let $E_{n,K}=\frac{C_K}{2(n+1)}$, where $C_K$ is a constant from Proposition~\ref{Lipschitz_prop}. Choose a locally finite family $(F_j)_{j\in J}$ of pairwise disjoint Borel subsets of $M$ together with points $z_j\in F_j$ and Borel maps $s_j:F_j\rightarrow V_{z_j}(E_{n,K})$ for $j\in J$, such that
\begin{itemize}
 \item $\bigcup_{j\in J}F_j=M$;
 \item for every $j\in J$ $\diam(F_j) < E_{n,K}$;
 \item for every $j\in J$ $s_j$ is a section of $p_{z_j}$ (i.e. $p_{z_j}\circ s_j = id:F_j\rightarrow F_j$) such that $s_j(z_j)=\bar{z_j}$.
\end{itemize}

A family with properties described above always exists. To see this choose a triangulation of $M$ (which exists because $M$ is Riemannian) and divide every triangle into a locally finite family of disjoint Borel sets with sufficiently small diameters. Also sections $s_j$ for $j\in J$ exist because for $x\in F_j$ a lift of the (not necessarily unique) shortest geodesic joining $z_j$ and $x$ has length $<E_{n,K}$ and one can choose $s_j(x)$ to be an endpoint of one of such lifts in a Borel way.

\begin{defi}
Let $F_j$, $z_j$, $s_j$ for $j\in J$ be as above and let $\pi_U:U\rightarrow M$ be a continuous map such that $B_M(z_j,E_{n,K})\subset \im(\pi_U)$. We call a Borel section $s'_j:F_j\rightarrow U$ of $\pi_U$ \emph{admissible} if there exists a continuous map $v_U:V_{z_j}(E_{n,K})\rightarrow U$ such that $s'_j = v_U\circ s_j$ and $\pi_U\circ v_U = p_{z_j}$, i.e. it fits into the commutative diagram
$$
\xymatrix{
V_{z_j}(E_{n,K})\ar@{.>}[rr]^{v_U} \ar@{>}[rrd]^(0.4){p_{z_j}} && U \ar@{>}[d]^{\pi_U} \\
F_j \ar@{>}[u]^{s_j} \ar@{.>}[urr]^(0.2){s'_j} \ar@{^{(}->}[rr] && M
}
$$
\end{defi}

A motivating example is given by the following lemma

\begin{lemma}\label{admissible_lift_lemma}
 Let $x\in M$ and $x'\in V_x(\injr{4})$. Then there exists a unique $j\in J$ and a unique admissible section
 $$
 s^{x'}_j:F_j\rightarrow B_{V_x}(x',2E_{n,K})
 $$
 with respect to the map $p_x:V_x\rightarrow M$ such that $x'\in s^{x'}_j(F_j)$.
\end{lemma}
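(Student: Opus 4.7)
The plan is to first pin down $j$ from the footprint of $x'$ in $M$, then assemble the admissible lift $v_U$ out of two transition diffeomorphisms from Proposition \ref{transition_prop}, and finally derive uniqueness by a monodromy-style open/closed argument on the connected domain $V_{z_j}(E_{n,K})$.

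For the choice of $j$, set $y:=p_x(x')\in M$. Any admissible section $s^{x'}_j$ satisfies $p_x\circ s^{x'}_j=\mathrm{id}_{F_j}$, so the requirement $x'\in s^{x'}_j(F_j)$ forces $y\in F_j$; since $(F_j)_{j\in J}$ is pairwise disjoint, this determines $j$ uniquely. I now construct $v_U$ as a composition of two transition maps. From $x'\in V_x(\injr{4})$, Proposition \ref{transition_prop} produces a locally isometric diffeomorphism
$$
I_{x',x}:V_y(\injr{4})\longrightarrow B_{V_x}(x',\injr{4}),\qquad \bar y\longmapsto x',\qquad p_x\circ I_{x',x}=p_y.
$$
Applied instead to $z_j$ and $y$ (allowed because $d_M(z_j,y)\leq\diam(F_j)<E_{n,K}<\injr{4}$) with the lift $s_j(y)\in V_{z_j}(E_{n,K})\subset V_{z_j}(\injr{4})$, the same proposition yields
$$
I_{s_j(y),z_j}:V_y(\injr{4})\longrightarrow B_{V_{z_j}}(s_j(y),\injr{4}),\qquad \bar y\longmapsto s_j(y),\qquad p_{z_j}\circ I_{s_j(y),z_j}=p_y.
$$
Since $s_j(y)\in V_{z_j}(E_{n,K})$, the triangle inequality gives $V_{z_j}(E_{n,K})\subset B_{V_{z_j}}(s_j(y),2E_{n,K})$, and $2E_{n,K}<\injr{4}$ places this inside the image of $I_{s_j(y),z_j}$. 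I therefore set
$$
v_U:=I_{x',x}\circ I_{s_j(y),z_j}^{-1}:V_{z_j}(E_{n,K})\longrightarrow V_x,\qquad s^{x'}_j:=v_U\circ s_j.
$$

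Verification is now routine. Commutativity in Proposition \ref{transition_prop} gives $p_x\circ v_U=p_y\circ I_{s_j(y),z_j}^{-1}=p_{z_j}$; the correspondence of basepoints gives $v_U(s_j(y))=x'$; and because each transition diffeomorphism identifies radial distances from $\bar y$ with radial distances from its target basepoint, the inclusion $V_{z_j}(E_{n,K})\subset B_{V_{z_j}}(s_j(y),2E_{n,K})$ transports to $v_U(V_{z_j}(E_{n,K}))\subset B_{V_x}(x',2E_{n,K})=U$. Thus $s^{x'}_j$ is a Borel section of $p_x$ landing in $U$, admissible by construction, with $s^{x'}_j(y)=x'$. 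For uniqueness, any competitor decomposes as $\tilde s^{x'}_j=\tilde v_U\circ s_j$ with $\tilde v_U:V_{z_j}(E_{n,K})\to U$ continuous and $p_x\circ\tilde v_U=p_{z_j}$; together with the first step this forces $\tilde v_U(s_j(y))=x'=v_U(s_j(y))$. The locus $\{\tilde v_U=v_U\}$ is non-empty, open (as $p_x$ is a local diffeomorphism), and closed (by continuity), so connectedness of the open ball $V_{z_j}(E_{n,K})$ forces equality everywhere.

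The main bookkeeping obstacle is to ensure that $v_U$ lands in $B_{V_x}(x',2E_{n,K})$ rather than merely in the larger $B_{V_x}(x',\injr{4})$ supplied by Proposition \ref{transition_prop}; this relies on the radial-isometric behaviour of the two transition maps together with the triangle inequality $V_{z_j}(E_{n,K})\subset B_{V_{z_j}}(s_j(y),2E_{n,K})$, both of which are available because $E_{n,K}$ has been chosen comfortably smaller than $\injr{4}$.
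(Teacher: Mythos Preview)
Your proof is correct and follows essentially the same route as the paper: both construct $v_U$ as the composite $I_{x',x}\circ I_{s_j(y),z_j}^{-1}$ of two transition maps from Proposition~\ref{transition_prop}, and both establish the target $B_{V_x}(x',2E_{n,K})$ via the triangle-inequality inclusion $V_{z_j}(E_{n,K})\subset B_{V_{z_j}}(s_j(y),2E_{n,K})$ transported through those local isometries. For uniqueness you run a direct open/closed connectedness argument on $V_{z_j}(E_{n,K})$, whereas the paper first conjugates the competitor back to a self-map $v$ of $V_{z_j}(E_{n,K})$ over $p_{z_j}$ and then argues $v=\mathrm{Id}$ by propagating along radial geodesics from $\bar z_j$; these are two phrasings of the same monodromy idea, and your version is arguably the cleaner of the two.
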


\begin{proof}
 Let $y=p_x(x')$, then $y$ is contained in a set $F_j$ for some $j\in J$. By Proposition~\ref{transition_prop} we can compose a canonical section $s_j$ with $I^{-1}_{s_j(y),z_j}:B_{V_{z_j}}(s_j(y),\injr{4})\rightarrow V_y(\injr{4})$ and obtain an admissible section $s'_j:F_j\rightarrow V_y(2E_{n,K})$ with respect to $p_y$ such that $s'_j(y)=\bar{y}$. After the composition of this section with $I_{x',x}:V_y(\injr{4})\rightarrow B_{V_x}(x',\injr{4})$ we obtain an admissible section $s^{x'}_j:F_j\rightarrow B_{V_x}(x',2E_{n,K})$ which satisfies required conditions.
 
 To see the uniqueness of $s^{x'}_j$, let $s'^{x'}_{j'}:F_{j'}\rightarrow B_{V_x}(x',2E_{n,K})$ be another admissible section satisfying the above conditions. Note that $F_j\ni p_x\circ s^{x'}_j(x') = p_x\circ s'^{x'}_{j'}(x') \in F_{j'}$, hence $j=j'$. After composing $s^{x'}_j$ and $s'^{x'}_{j'}$ with $I_{s_j(y),z_j}\circ I^{-1}_{x',x}:B_{V_x}(x',\injr{4})\rightarrow B_{V_{z_j}}(s_j(y), \injr{4})$ and using the admissibility of $s'^{x'}_{j'}$ we obtain sections $s_j,s'_j:F_j\rightarrow V_{z_j}(\injr{4})$ and a map $v:V_{z_j}(E_{n,K})\rightarrow V_{z_j}$ such that $v\circ s_j(y)=s'_j(y)$ and the following diagram commutes 
 $$
 \xymatrix{
  V_{z_j}(E_{n,K})\ar@{>}[rr]^{v} \ar@{>}[rrd]^(0.4){p_{z_j}} && V_{z_j} \ar@{>}[d]^{p_{z_j}} \\
  F_j \ar@{>}[u]^{s_j} \ar@{>}[urr]^(0.2){s'_j} \ar@{^{(}->}[rr] && M
  }
 $$
 It suffices to show that $v=Id|_{V_{z_j}(E_{n,K})}$. But since $p_{z_j}$ is a local isometry, $v$ is also, so it is an identity in some neighbourhood of $s_j(y)$. It follows that it must be an identity on the neighbourhood of the geodesic path joining $s_j(y)$ and $\bar{z}_j$, and consequently on every geodesic joining $\bar{z}_j$ with any other point of $V_{z_j}(E_{n,K})$. In consequence $v=Id|_{V_{z_j}(E_{n,K})}$.
\end{proof}

Now we turn back to the definition of piecewise straightening.

\begin{defi}
Let $\sigma:\Delta^k\rightarrow M$ be a Lipschitz singular simplex. Then we say that $\sigma$ is \emph{$\varepsilon$-geodesic} (with respect to $(F_j)_{j\in J}$) if  $\Lip(\sigma) \leq \frac{\varepsilon}{\sqrt{2}}$ and there exists $x\in M$ and a lift $\wt{\sigma}:\Delta^k\rightarrow V_x$ of $\sigma$ such that $\wt{\sigma}$ is geodesic with vertices in some lifts of the points $z_j$, $j\in J$.
\end{defi}

Note that by Proposition~\ref{transition_prop} if $\varepsilon < \injr{4}$ then the above definition does not depend on the choice of $x\in M$ unless $\sigma(\Delta^k)\subset B_M(x,\injr{4})$.

\begin{defi}
Let $\sigma:\Delta^k\rightarrow M$ be a singular simplex and let $S^{(m)}(\sigma)=\sum_i \sigma_i$ be its $m$-times iterated barycentric subdivision, where $m\in\mathbb{N}$. We say that $\sigma$ is \emph{($m$-)piecewise straight} if every $\sigma_i$ in $S^{(m)}(\sigma)$ is $E_{n,K}$-geodesic (with respect to $(F_j)_{j\in J}$).

We say that a (locally finite) chain $c=\sum_{i\in\mathcal{I}}a_i\sigma_i\in C_*(M)$ is \emph{piecewise straight} if there exists $m\in\mathbb{N}$ such that every $\sigma_i$, $i\in\mathcal{I}$, is $m$-piecewise straight.
\end{defi}

Let $\sigma:\Delta^k\rightarrow M$ for $k\leq n$ be a Lipschitz singular simplex. We define the \emph{straightening} of $\sigma$ (with respect to $(F_j)_{j\in J}$) as follows. Choose $m\in\mathbb{N}$ such that each simplex $\sigma_i$ in $S^{(m)}(\sigma) = \sum_i\sigma_i$ has Lipschitz constant less than $\frac{E_{n,K}}{\sqrt{2}}$. Such $m$ exists because diameters of subdivided simplices in $\Delta^k$ tends to $0$ \cite[Corollary 9.4.9]{ES}, hence also Lipschitz constants of subdivided simplices in $\sigma$. Moreover, we can choose $m$ depending only on $n$, $K$ and $\Lip(\sigma)$. For every simplex $\sigma_i$ choose a point $y_i\in\Delta^k$ and let $y'_i=\sigma_i(y_i)$, then by Corollary~\ref{simplex_lift_cor} there is a unique lift $\wt{\sigma_i}:\Delta^k\rightarrow V_{y'_i}(E_{n,K})$ of $\sigma_i$ such that $\wt{\sigma_i}(y_i)=\bar{y}'_i$. Denote by $x_{i,0},...,x_{i,k}$ its vertices, let $s'_{i,l}:F_{i,l}\rightarrow V_{y'_i}$ be admissible sections containing $x_{i,l}$ in their images for $l=0,...,k$, constructed by Lemma~\ref{admissible_lift_lemma} and let $z'_{i,l} = s'_{i,l}(z_{i,l})$ for $l=0,...,k$. In particular $z'_{i,0},...,z'_{i,k}\in V_{y'_i}(2E_{n,K})$, hence the geodesic simplex $[z'_{i,0},...,z'_{i,k}]$ exists by Lemma~\ref{diam} because
$$
2E_{n,K} =\frac{2C_K}{2(n+1)} < \injr{8(n+1)} < \injr{8k}.
$$
Let $\str_{y_i}(\sigma_i) = [z'_{i,0},...,z'_{i,k}]$ and define
$$
\str_m(\sigma) = (S^{(m)})^{-1}(\sum_i p_{y'_i}\circ\str_{y_i}(\sigma_i)).
$$
Moreover, by Lemma~\ref{diam} we have
$$
[z'_{i,0},...,z'_{i,k}] \subset V_{y'_i}(2(k+1)E_{n,K})\subset V_{y'_i}(C_K)
$$
so it follows from Proposition~\ref{Lipschitz_prop} that $[\wt{\sigma_i},[z'_{i,0},...,z'_{i,k}]]$ exists and defines a Lipschitz homotopy $H_{y_i}:\Delta^k\times I\rightarrow V_{y'_i}$ between these simplices, with Lipschitz constant depending only on $m$, $K$ and $\Lip(\sigma)$. Define 
$$
H_m (\sigma) = (S^{(m)}\times Id_I)^{-1}(\sum_i p_{y'_i}\circ H_{y_i}(\sigma_i)).
$$

To show that $\str_m$ and $H_m$ are well defined it suffices to verify that the construction is independent of the choice of $y_i\in\Delta^k$. Indeed, assuming this fact if $\partial^q:C_k(M)\rightarrow C_{k-1}(M)$ for $q=0,...,k$ is an operator assigning to a singular simplex its $q$-th face, we see that for any $\dot{y}_i\in \Delta^{k-1}\subset\partial^q\Delta^k$ and $\dot{y}'_i=\sigma_i(\dot{y}_i)$ we have
$$
\partial^q(p_{y'_i}\circ\str_{y_i}(\sigma_i)) = \partial^q(p_{\dot{y}'_i}\circ\str_{\dot{y}_i}(\sigma_i)) = p_{\dot{y}'_i}\circ\partial^q\str_{\dot{y}_i}(\sigma_i) = p_{\dot{y}'_i}\circ\str_{\dot{y}_i}(\partial^q\sigma_i).
$$
where the last equality is the consequence of the fact that the straightening of a face of any singular simplex depends only on this particular face, not on the whole simplex. In particular, if two simplices $\sigma_i$ and $\sigma_{i'}$ have some face in common, their straightenings will also have the same one. This shows that $\sum_i p_{y'_i}\circ\str_{y_i}(\sigma_i)$ lies in the image of $S^{(m)}$, hence (after giving some ordering on the vertices of $\sigma_i$) we can choose a preimage in the canonical way. The same proof applies also to $H_m$.

Now we verify our claim. Let $\dot{y}_i\in\Delta^k$, $\dot{y}'_i=\sigma_i(\dot{y}_i)\in M$ and $\wt{y}'_i=\wt{\sigma}_i(\dot{y}'_i)\in V_{y'_i}(E_{n,K})$. Then Proposition~\ref{transition_prop} gives an isometry $I_{\wt{y}'_i,y'_i}$ between $V_{\dot{y}'_i}(\injr{4})$ and $B_{V_{y'_i}}(\wt{y}'_i,\injr{4})$.
By Lemma~\ref{joinlemma} we have
$$
H_{y_i}(\Delta^k\times I)\subset V_{y'_i}(C_K+E_{n,K}) \subset V_{y'_i}(\frac{3\pi}{16\sqrt{K}}).
$$
and because $d_{V_{y'_i}}(\bar{y}'_i,\wt{y}'_i)<E_{n,K}<\injr{16}$, the images of $H_{y_i}$ and $H_{\dot{y}_i}$ stay in $B_{V_{y'_i}}(\wt{y}'_i,\injr{4})$ and $V_{\dot{y}'_i}(\injr{4})$ respectively. Moreover, $I_{\wt{y}'_i,y'_i}$ maps respective admissible sections $\dot{s}'_{i,l}:F_{i,l}\rightarrow V_{\dot{y}'_i}$ to admissible sections $s'_{i,l}:F_{i,l}\rightarrow V_{y'_i}$, hence $H_{y_i}=I_{\wt{y}'_i,y'_i}\circ H_{\dot{y}_i}$. As a result they are the same after pushing them back on $M$. This argument applies also to $\str_{y_i}$ since $\str_{y_i} = H_{y_i}(-,1)$.

Let $c=\sum_ia_i\sigma_i$ be a locally finite Lipschitz chain with Lipschitz constant $L$. We see that we can choose $m\in\mathbb{N}$, depending only on $n$, $L$ and $K$, such that $\str_m(\sigma_i)$ is defined for every $i$, so we can define $\str_m(c)$ simply as $\sum_i a_i \str_m(\sigma_i)$. The chain $\str_m(c)$ is Lipschitz because of Proposition~\ref{Lipschitz_prop} and Lemma~\ref{lemma_geodesic_lip}, and locally finite since by construction for any singular simplex $\sigma:\Delta^k\rightarrow M$ we have $\str_m(\sigma)\subset B_M(\sigma(\Delta^k),C_K)$. Note that the straightening defined as above does not define a chain operator $C^{lf,\Lip}_{*\leq n}(M)\rightarrow C^{lf,\Lip}_{*\leq n}(M)$, where $C^{lf,\Lip}_*(M)$ are locally finite Lipschitz chains, because we cannot choose $m$ uniformly. However, it allows us to prove slightly weaker statement. Recall that $C^{lf,<L}_*(M)$ is a chain complex of locally finite singular chains on $M$ consisting of simplices with Lipschitz constant less than $L$.

\begin{lemma}\label{chain_homotopy_lemma}
 For every $L<\infty$ there exists $m\in\mathbb{N}$ such that the operator
 $$
 \str_m:C^{lf,<L}_{*\leq n}(M)\rightarrow C^{lf,\Lip}_*(M) 
 $$
 is a well defined chain map homotopic to the inclusion $\iota:C^{lf,<L}_{*\leq n}\rightarrow C^{lf,\Lip}_*(M)$. Moreover, $|\str_m|_1\leq 1$.
\end{lemma}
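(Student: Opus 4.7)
The plan is to assemble the per-simplex data $\str_m(\sigma)$ and $H_m(\sigma)$ built in the paragraphs before the lemma into global operators, making a single uniform choice of $m$ that handles every simplex with $\Lip<L$, and then to verify the chain identity and the homotopy by reducing each to a face-level matching that has essentially been prepared.

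First I would pick $m$ uniformly. After $m$ barycentric subdivisions of $\Delta^k$ each piece of $S^{(m)}(\sigma)$ has the form $\sigma\circ\phi_i$ with $\Lip(\phi_i)\leq (k/(k+1))^m$, so for $\sigma\in C^{lf,<L}_{*\leq n}(M)$ one has $\Lip(\sigma_i)<L(n/(n+1))^m$. Choosing $m$ depending only on $n$, $L$ and $K$ so that this is less than $E_{n,K}/\sqrt{2}$ makes Corollary~\ref{simplex_lift_cor} and the constructions of $\str_m$ and $H_m$ applicable to every simplex appearing in any chain in $C^{lf,<L}_{*\leq n}(M)$. Setting $\str_m(c):=\sum_i a_i\str_m(\sigma_i)$ on $c=\sum_i a_i\sigma_i$, each $\str_m(\sigma)$ is a single singular simplex, so $|\str_m(c)|_1\leq|c|_1$ and $|\str_m|_1\leq 1$. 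Local finiteness follows from $\str_m(\sigma)(\Delta^k)\subset B_M(\sigma(\Delta^k),C_K)$, while the uniform Lipschitz bound comes from Proposition~\ref{Lipschitz_prop} combined with Lemma~\ref{lemma_geodesic_lip}, so $\str_m(c)\in C^{lf,\Lip}_*(M)$.

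The central step is the chain map identity $\partial\str_m=\str_m\partial$, which I expect to be the main obstacle. It reduces, by linearity and by the chain-map property of $S^{(m)}$, to showing that for each small simplex $\sigma_i$ the $q$-th face of $p_{y'_i}\circ\str_{y_i}(\sigma_i)$, pushed down to $M$, coincides with the analogous pushdown of the straightening of $\partial^q\sigma_i$ built from the restricted vertex data. Because $[z'_{i,0},\dots,z'_{i,k}]$ is defined inductively as a geodesic join of faces, its $q$-th face is literally the geodesic simplex on the remaining vertices $\{z'_{i,l}\}_{l\neq q}$. The uniqueness statement in Lemma~\ref{admissible_lift_lemma} and the commutative triangle of Proposition~\ref{transition_prop} then identify this with $\str_{\dot y_i}(\partial^q\sigma_i)$ for any basepoint $\dot y_i$ in the corresponding face, up to the canonical transition isometry, and the base-point independence already verified for $\str_{y_i}$ shows that the pushdowns to $M$ agree. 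Hence the boundary operator commutes with $\str_m$.

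Finally, the chain homotopy is standard once $H_m(\sigma)$ is available. Decompose the prism $\Delta^k\times[0,1]$ into the usual $(k+1)$-simplices $P_0,\dots,P_k$ with alternating signs and set $h_m(\sigma):=\sum_{j=0}^k(-1)^j H_m(\sigma)\circ P_j$, extending linearly. The classical prism identity together with $H_m(\sigma)|_{t=0}=\sigma$ and $H_m(\sigma)|_{t=1}=\str_m(\sigma)$ yields $\partial h_m+h_m\partial=\str_m-\iota$; Lipschitz regularity of each $H_m(\sigma)\circ P_j$ is Proposition~\ref{Lipschitz_prop}, and local finiteness is preserved by the same $C_K$-thickening argument, so $h_m:C^{lf,<L}_{*\leq n}(M)\to C^{lf,\Lip}_{*+1}(M)$ is the required chain homotopy.
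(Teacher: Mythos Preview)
Your proposal is correct and follows the same route as the paper: a uniform choice of $m$, the chain-map identity reduced via the chain property of $S^{(m)}$ to the face compatibility $\partial^q\str_{y_i}(\sigma_i)=\str_{\dot y_i}(\partial^q\sigma_i)$ that was prepared in the paragraphs before the lemma, and the chain homotopy built from $H_m$ via the standard prism decomposition $P_k$. One minor imprecision: the factor $(k/(k+1))^m$ is the \emph{diameter} contraction under barycentric subdivision, not directly a bound on $\Lip(\phi_i)$ for the affine parametrizations, so an extra dimension-dependent constant enters---but this does not affect the existence of a uniform $m$ depending only on $n$, $L$, $K$.
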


\begin{proof}
 Choose $m$ such that $\str_m$ is well defined for any singular simplex $\sigma:\Delta^k\rightarrow M$ with $k\leq n$ and $\Lip(\sigma)<L$. Then for any such singular simplex $\sigma$ let $S^{(m)}(\sigma)=\sum_i\sigma_i$. For arbitrary $y\in\Delta^k$ and $y^q\in\partial^q\Delta^k$, $q=0,...,k$, we have
 \begin{eqnarray*}
 \partial\str_m(\sigma) & = & \sum_{q=0}^k(-1)^q\partial^q (S^{(m)})^{-1}(\sum_i p_{\sigma_i(y)}\circ\str_y(\sigma_i)) \\
 & = & \sum_{q=0}^k(-1)^q (S^{(m)})^{-1}(\sum_i \partial^q (p_{\sigma_i(y)}\circ\str_y(\sigma_i)))\\
 & = & \sum_{q=0}^k(-1)^q (S^{(m)})^{-1}(\sum_i p_{\partial^q\sigma_i(y^q)}\circ\str_{y^q}(\partial^q\sigma_i))\\
 & = & \str_m(\sum_{q=0}^k(-1)^q \partial^q\sigma) = \str_m(\partial\sigma)
 \end{eqnarray*}
 where we use the fact that $S^{(m)}$ is a chain operator and the construction of $\str_m$. This shows that $\str_m$ is a chain map. To obtain a chain homotopy joining $\str_m$ and $\iota$ let $P_k\in C_{k+1}(\Delta^k\times I)$ be a canonical division of $\Delta\times I$ into singular simplices described e.g. in \cite[Proof of 2.10]{HAT} and let $h:C_k^{lf,<L}(M)\rightarrow C_{k+1}^{lf,\Lip}(M)$ for $k\leq n$ be defined as $h(\sigma) = H_m(\sigma)_*(P_k)$. Note that $h(c)$ is Lipschitz, because $H_m$ is by Proposition~\ref{Lipschitz_prop} and Lemma~\ref{lemma_geodesic_lip}, and locally finite since by construction $H_m(\sigma)(\Delta^k\times I)\subset B_M(\sigma)(\injr{4})$ for any $\sigma:\Delta^k\rightarrow M$. The proof that it provides the desired chain homotopy is standard and described e.g. in \cite[Proof of Theorem 2.10]{HAT} or \cite[Lemma 2.13]{LS}. The proof that $|\str_m|_1\leq 1$ is straightforward.
\end{proof}

\begin{cor}\label{homology_representation}
Every homology class $\xi$ in $H_{*\leq n}^{lf,\Lip}(M)$ can be represented by a piecewise straight chain with vertices in $(z_j)_{j\in J}$. Moreover, $l^1$ semi-norm on $H_{*\leq n}^{lf,\Lip}(M)$ can be computed on piecewise straight chains. 
\end{cor}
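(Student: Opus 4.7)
The strategy is to translate Lemma~\ref{chain_homotopy_lemma} from the chain level to the homology level. Given a class $\xi\in H^{lf,\Lip}_{*\leq n}(M)$ and a Lipschitz locally finite cycle $c=\sum_i a_i\sigma_i$ representing it, the chain $c$ has $\Lip(c)=L_0<\infty$ by definition of $C^{lf,\Lip}_*(M)$. First I would pick any $L>L_0$ and invoke Lemma~\ref{chain_homotopy_lemma} to produce $m\in\mathbb{N}$ for which $\str_m\colon C^{lf,<L}_{*\leq n}(M)\to C^{lf,\Lip}_*(M)$ is a chain map chain-homotopic to the inclusion. The candidate representative of $\xi$ is then $\str_m(c)$, which is homologous to $c$ and automatically a Lipschitz locally finite cycle by the lemma.

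Next I would verify the piecewise-straightness of $\str_m(c)$ together with the vertex condition. This is essentially a bookkeeping exercise inside the construction of $\str_m$: for each $\sigma_i$ appearing in $c$, applying $S^{(m)}$ to $\str_m(\sigma_i)$ recovers $\sum_\alpha p_{y'_\alpha}\circ[z'_{\alpha,0},\ldots,z'_{\alpha,k}]$, whose vertices $z'_{\alpha,l}=s'_{\alpha,l}(z_{j(\alpha,l)})$ are by design lifts to $V_{y'_\alpha}$ of distinguished points in $(z_j)_{j\in J}$. The pieces are thus geodesic in the respective exponential neighbourhoods with vertices lifted from $(z_j)_{j\in J}$, which is exactly the geodesic-with-good-vertices half of the definition of $E_{n,K}$-geodesic. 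This step is the main obstacle, because one must also confirm the Lipschitz bound $\Lip\leq E_{n,K}/\sqrt{2}$ in the definition of $E_{n,K}$-geodesic; this requires using that $m$ was chosen large enough that the original subdivided pieces $\sigma_i$ already have small Lipschitz constants, combined with the fact that the choice $E_{n,K}=C_K/(2(n+1))$ keeps the diameter of the geodesic simplex $[z'_{\alpha,0},\ldots,z'_{\alpha,k}]$ well inside the regime where Lemma~\ref{Loeh_positive} controls tangent norms, so that after possibly enlarging $m$ (still permitted by Lemma~\ref{chain_homotopy_lemma}) the straightened pieces inherit the required Lipschitz bound.

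For the seminorm statement, one inequality is free: piecewise-straight Lipschitz cycles representing $\xi$ are a subcollection of all Lipschitz cycles representing $\xi$, so the infimum-seminorm over the larger class is $\leq$ the infimum over the smaller class. For the reverse inequality, Lemma~\ref{chain_homotopy_lemma} records that $|\str_m|_1\leq 1$, hence for any Lipschitz representative $c$ of $\xi$ the piecewise-straight representative $\str_m(c)$ satisfies $|\str_m(c)|_1\leq|c|_1$; taking the infimum over $c$ on both sides delivers equality of the two seminorms. No further ingredients beyond Lemma~\ref{chain_homotopy_lemma} and an inspection of the construction of $\str_m$ are needed.
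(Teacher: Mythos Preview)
Your approach is exactly the paper's: pick a Lipschitz representative $c$ of $\xi$, observe $c\in C^{lf,<L}_k(M)$ for some finite $L$, invoke Lemma~\ref{chain_homotopy_lemma} to obtain $\str_m(c)$ homologous to $c$ with $|\str_m(c)|_1\le|c|_1$, and conclude. The paper's proof is four lines and simply asserts that $\str_m(c)$ is ``obviously straight and has its vertices in $(z_j)_{j\in J}$'' without inspecting the Lipschitz clause in the definition of $E_{n,K}$-geodesic; your seminorm argument is identical to the paper's.

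Where you go further---checking the Lipschitz bound $\Lip\le E_{n,K}/\sqrt{2}$ for the straightened pieces---your proposed fix does not actually work. The Lipschitz constant of the geodesic simplex $[z'_{\alpha,0},\ldots,z'_{\alpha,k}]$ is controlled by the universal constant of Lemma~\ref{Loeh_positive}, and more to the point the vertices $z'_{\alpha,l}$ are determined by which sets $F_j$ the original vertices fall into; adjacent $F_j$'s have centres up to roughly $2E_{n,K}$ apart regardless of how large $m$ is, so enlarging $m$ does not shrink the straightened pieces below that scale and cannot force their Lipschitz constant under $E_{n,K}/\sqrt{2}$. Subdividing \emph{after} straightening does not help either, since a barycentric sub-simplex of a geodesic simplex is not itself geodesic in curved geometry. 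This is a genuine slack between the paper's definition of $E_{n,K}$-geodesic and what $\str_m$ actually produces; the paper ignores it, and in practice the only role of that Lipschitz clause is to guarantee that the piece lifts into some $V_x$, which the diameter estimate of Lemma~\ref{diam} already secures. Modulo this definitional wrinkle---which is the paper's, not yours---your argument is complete and matches the paper's.
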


\begin{proof}
Let $c=\sum_i a_i\sigma_i\in C_k^{lf,\Lip}(M)$, $k\leq n$, be any cycle such that $[c]=\xi$. Then $c\in C_k^{lf,<L}(M)$ for some $L<\infty$. Hence by Lemma~\ref{chain_homotopy_lemma} there exists $m$ such that a chain $\str_m(c)$ is homologuous to $c$ and $|\str_m(c)_1|\leq |c|_1$. It is also obviously straight and has its vertices in $(z_j)_{j\in J}$.
\end{proof}

\begin{remark}
The results above are stated only for $*\leq n$. However, for $*>n$ groups $H_*^{lf,\Lip}(M)$ vanish \cite[Theorem 3.3]{LS}. Moreover, we could simply modify the constants used in the straightening to work for $*\leq N$ for $N$ arbitrarily large. In further work we will without loss of generality assume that all chains and homology classes are of dimension $*\leq n$.
\end{remark}

\begin{remark}
It is obvious that the straightening procedure depends on the choice of sets $(F_j)_{j\in J}$, sections $s_j$ for $j\in J$ and $m\in\mathbb{N}$, which depends on particular chain which we would like to straighten. However, in most cases these details are of secondary interest, therefore we will just say shortly about \emph{applying (piecewise) straightening procedure} meaning applying it with respect to any suitable family $(F_j)_{j\in J}$ and any $m\in\mathbb{N}$ for which the procedure is defined.
\end{remark}

\section{Piecewise $C^1$ homology theories}\label{SecHom}
The straightening procedure described in the previous section is sufficient for some applications, though we need some more extensive machinery. One of the key properties of the classical straightening procedure for non-positively curved manifolds is that the straightened chains are smooth, because they consist of geodesic simplices. It is important e.g. in the proof of the proportionality principle in non-positively curved case, which depends on measure homology with $C^1$ Lipschitz support, i.e. where 'chains' are Borel measures with finite variation on $C^1$ singular simplices with $C^1$-topology, with additional assumption that support of each 'chain' is contained in $L$-Lipschitz simplices for some $L<\infty$. Differentability here is strictly technical, but necessary, because it allows to recognise fundamental cycle by integrating the volume form. However, the piecewise straight simplices which we use are only piecewise $C^1$.

In Section~\ref{PSH} we define piecewise $C^1$ simplices and chains and introduce piecewise $C^1$ homology. In Section~\ref{PSMH} we provide some reasonable topology on these simplices in order to define corresponding measure homology theory.

\subsection{Piecewise $C^1$ homology}\label{PSH}
Let $M$ be a connected, complete $n$-dimensional Riemannian manifold with $\sec(M)<K$. Before we continue, let us fix some notation concerning convex polyhedra. Let $V\subset\mathbb{R}^n$ be an affine space and let $\langle,\rangle$ be a truncation of a standard scalar product on $\mathbb{R}^n$ to $V$.
\begin{itemize}
 \item for $v\in V$ and $b\in\mathbb{R}$ a \emph{half-space} $H_{v,b}\subset V$ is
 $$
 H_{v,b}=\{x\in V\::\:\langle x,v\rangle\leq b\};
 $$
 \item a convex polyhedron $P\subset V$ is an intersection of finite number of half-spaces;
 \item $\dim{P} = \min\{\dim{W}\::\: P\subset W\,,\,W\subset V \text{ is an affine subspace}\}$;
 \item $P$ is \emph{nondegenerated} if $\dim{P} = \dim{V}$;
 \item for a convex polyhedron $P$ a map $f:P\rightarrow M$ is $C^1$ if it can be extended to a $C^1$ map $f':U\rightarrow M$, where $U\subset V$ is some open neighbourhood of $P\subset V$.
\end{itemize}

\begin{defi}
Let $V =\{(x_0,...,x_k)\in \mathbb{R}^{k+1}\::\: \sum_{i=0}^k x_i=1\}\supset \Delta^k$. We say that a family $\mathcal{P}$ of nondegenerated convex polyhedra $P\subset\Delta^k$ is \emph{$\Delta^k$-admissible} if it satisfies
\begin{itemize}
 \item $\bigcup_{P\in\mathcal{P}} P = \Delta^k$;
 \item $\forall_{P_1,P_2\in\mathcal{P}}\: P_1\neq P_2\Rightarrow \dim{P_1\cap P_2}<k$.
\end{itemize}

We will denote the family of all $\Delta^k$-admissible families by $\mathscr{P}_k$.
\end{defi}

A good example of a $\Delta^k$-admissible family of convex polyhedra is a barycentric subdivision $S\Delta^k$ and more generally $m$-times iterated barycentric subdivision $S^{(m)}\Delta^k$.

For two families $\mathcal{P}_1, \mathcal{P}_2\in \mathscr{P}_k$ we can define their product
$$
\mathcal{P}_1\cdot\mathcal{P}_2 = \{P_1\cap P_2\::\:P_1\in\mathcal{P}_1,\,\:P_2\in\mathcal{P}_2\,,\,\dim{P_1\cap P_2}=k\}
$$
which is also $\Delta^k$-admissible. This product is obviously commutative and associative. Moreover, we can put a partial order on $\mathscr{P}_k$ by
$$
\mathcal{P}_1\leq \mathcal{P}_2\Leftrightarrow \forall_{P_2\in\mathcal{P}_2}\exists_{P_1\in\mathcal{P}_1}\:P_2\subset P_1.
$$
Note that with this order every finite set $\{\mathcal{P}_1,...,\mathcal{P}_m\}\subset \mathscr{P}_k$ has supremum $\mathcal{P}_1\cdot...\cdot\mathcal{P}_m$.

\begin{defi}
Let $\mathcal{P}$ be a $\Delta^k$-admissible family of convex polyhedra and let $\sigma:\Delta^k\rightarrow M$ be a singular simplex. We say that it is \emph{$\mathcal{P}$-$C^1$} if for every $P\in\mathcal{P}$ $\sigma|_P:P\rightarrow M$ is of class $C^1$.

A chain $c\in C^{lf}_k(M)$ is called \emph{$\mathcal{P}$-$C^1$} if it consists of $\mathcal{P}$-$C^1$ simplices and is \emph{piecewise $C^1$} if it is $\mathcal{P}$-$C^1$ for some $\mathcal{P}\in\mathscr{P}_k$.
\end{defi}

Note that if $c_1, c_2\in C^{lf}_k(M)$ are singular chains such that $c_1$ is $\mathcal{P}_1$-$C^1$ and $c_2$ is $\mathcal{P}_2$-$C^1$ then $c_1+c_2$ is $\mathcal{P}_1\cdot\mathcal{P}_2$-$C^1$, hence finite sums of piecewise $C^1$ chains are piecewise $C^1$. Moreover, if $c\in C_k^{lf}(M)$ is $\mathcal{P}$-$C^1$ then $\partial c$ is $\prod_{q=0}^k\partial^q\mathcal{P}$-$C^1$, where
$$
\partial^q\mathcal{P} = \{P\cap \partial^q\Delta^k\::\:P\in\mathcal{P}\,,\,\dim{P\cap \partial^q\Delta^k}=k-1\}
$$
for $q=0,...,k$. In particular piecewise $C^1$ chains form a subcomplex of $C^{lf}_*(M)$. The same is true for $C^{lf,\Lip}_*(M)$ if we consider piecewise $C^1$ Lipschitz chains. Therefore we can define \emph{piecewise $C^1$ locally finite homology} $H^{PC^1,lf}_*(M)$ and \emph{piecewise $C^1$ locally finite Lipschitz homology} $H_*^{PC^1,lf,Lip}(M)$.

Obviously every piecewise straight chain is piecewise smooth (with respect to some iterated barycentric subdivision) by Lemma \ref{Loeh_positive}. To show that the homology theories defined above are isometric to the corresponding non-$C^1$ ones, we need the following lemma.

\begin{lemma}\label{piecewise_homotopy_lemma}
 Let $c\in C_k^{PC^1,lf,\Lip}(M)$ be a piecewise $C^1$ locally finite Lipschitz cycle and let $m\in\mathbb{N}$ be such that $\str_m(c)$ is defined. Then $c$ and $\str_m(c)$ are homologous in $C_{k}^{PC^1,lf,\Lip}(M)$.
\end{lemma}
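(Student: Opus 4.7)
The plan is to reuse the chain homotopy $h$ constructed in the proof of Lemma~\ref{chain_homotopy_lemma} and verify that, when restricted to piecewise $C^1$ chains, it already takes values in piecewise $C^1$ chains. Concretely, recall that for a Lipschitz simplex $\sigma:\Delta^k\rightarrow M$ with $\Lip(\sigma)<L$, $h(\sigma)=H_m(\sigma)_*(P_k)$, where $P_k\in C_{k+1}(\Delta^k\times I)$ is the canonical triangulation of the prism into $k+1$ affine singular $(k+1)$-simplices. By the computation in Lemma~\ref{chain_homotopy_lemma}, $\partial h+h\partial = \str_m - \iota$, so it suffices to check that $h$ sends $C_k^{PC^1,lf,\Lip}(M)$ into $C_{k+1}^{PC^1,lf,\Lip}(M)$; then applying this identity to our cycle $c$ exhibits the desired homology inside the piecewise $C^1$ Lipschitz subcomplex.

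The core regularity step is local. Assume $\sigma$ is $\mathcal{P}$-$C^1$ for some $\mathcal{P}\in\mathscr{P}_k$. The simplices $\sigma_i$ of $S^{(m)}(\sigma)$ are compositions of $\sigma$ with affine isomorphisms $\phi_i:\Delta^k\to P_i$, hence each $\sigma_i$ is $C^1$ on the admissible family $\phi_i^{-1}(P_i\cap\mathcal{P})\in\mathscr{P}_k$. Since $p_{y'_i}$ is a local isometry (in particular smooth), the lift $\wt{\sigma}_i$ inherits the same piecewise regularity. The geodesic simplex $[z'_{i,0},\ldots,z'_{i,k}]$ is smooth on all of $\Delta^k$ by Lemma~\ref{Loeh_positive}, since its diameter is controlled by Lemma~\ref{diam} and is $<\pi/(2\sqrt{K})$. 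Using the representation of the geodesic join
\[
H_{y_i}(x,t)=\exp_{\wt{\sigma}_i(x)}\bigl(t\cdot\exp^{-1}_{\wt{\sigma}_i(x)}[z'_{i,0},\ldots,z'_{i,k}](x)\bigr)
\]
noted in the proof of Proposition~\ref{Lipschitz_prop} (together with smoothness of $\exp$ on the relevant neighbourhood in $TV$), we conclude that $H_{y_i}$ is $C^1$ on $Q\times I$ for each top-dimensional $Q\in\phi_i^{-1}(P_i\cap\mathcal{P})$. Pushing down by $p_{y'_i}$ and gluing yields an explicit admissible family $\mathcal{Q}\in\mathscr{P}_k$ (a common refinement of $S^{(m)}\Delta^k$ and $\mathcal{P}$) such that $H_m(\sigma):\Delta^k\times I\to M$ is $C^1$ on each $Q\times I$, $Q\in\mathcal{Q}$.

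It remains to check that the prism chain $H_m(\sigma)_*(P_k)$ is piecewise $C^1$. The canonical prism decomposition $P_k$ consists of affine singular $(k+1)$-simplices $\tau_j:\Delta^{k+1}\to\Delta^k\times I$. For each $j$ the family
\[
\mathcal{Q}_j=\{\tau_j^{-1}(Q\times I)\::\: Q\in\mathcal{Q},\ \dim\tau_j^{-1}(Q\times I)=k+1\}
\]
consists of nondegenerated convex polyhedra in $\Delta^{k+1}$, covers $\Delta^{k+1}$, and has pairwise lower-dimensional intersections; so $\mathcal{Q}_j\in\mathscr{P}_{k+1}$ and $H_m(\sigma)\circ\tau_j$ is $\mathcal{Q}_j$-$C^1$. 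Hence $h(\sigma)\in C_{k+1}^{PC^1,lf,\Lip}(M)$, and the same bookkeeping extends linearly and locally finitely to arbitrary chains since the straightening $\str_m$ and the homotopy $h$ are local operations and the piecewise $C^1$ property is preserved under locally finite sums by the product of admissible families. Applying $\partial h+h\partial = \str_m-\iota$ to the cycle $c$ finishes the proof.

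The main technical friction I anticipate is purely combinatorial: carefully verifying that the intersection of the refined polyhedral decomposition of $\Delta^k$ (coming from $\mathcal{P}$ and $S^{(m)}$) with the affine prism cells $\tau_j(\Delta^{k+1})$ produces a family that is genuinely $\Delta^{k+1}$-admissible in the sense of Section~\ref{PSH}. This is a routine check about convex polyhedra and affine maps, but it is the one place where one must actually manipulate the definitions rather than invoke earlier results.
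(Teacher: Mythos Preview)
Your proposal is correct and follows essentially the same route as the paper: reduce to showing the chain homotopy $h$ from Lemma~\ref{chain_homotopy_lemma} preserves piecewise $C^1$ chains, note that $H_m(\sigma)$ is $C^1$ on each $Q\times I$ for $Q$ in the common refinement $\mathcal{P}\cdot S^{(m)}\Delta^k$, and pull this back through the affine prism simplices to obtain a $\Delta^{k+1}$-admissible family. Your argument is in fact more explicit than the paper's at the regularity step (invoking the $\exp$-formula from Proposition~\ref{Lipschitz_prop} and Lemma~\ref{Loeh_positive}), and the combinatorial admissibility check you flag as the main friction is exactly what the paper asserts without further comment.
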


\begin{proof}
 In the proof of Lemma \ref{chain_homotopy_lemma} we constructed a chain homotopy $h:C_k^{lf,<L}(M)\rightarrow C_{k+1}^{lf,\Lip}(M)$ between an identity and $\str_m$ for some $L<\infty$. Therefore it suffices to to show that if $c$ is piecewise $C^1$, then $h(c)$ is.

 Assume that a singular simplex $\sigma$ is $\mathcal{P}$-$C^1$. Then a map $H_m(\sigma)|_{P\times I}:P\times I\rightarrow M$ is $C^1$ for $P\in \mathcal{P}\cdot(S^{m}\Delta^k)$. Moreover, if $P_k$ is a canonical subdivision of a prism as in \cite[Proof of 2.10]{HAT} (this time considered as a set of simplices) then for any simplex $\Delta'\in P_k$ a family
 $$
 \mathcal{P}_{m,\Delta'}=\{\Delta'\cap (P\times I)\::\: P\in\mathcal{P}\cdot(S^{m}\Delta^k)\,,\,\dim{(\Delta'\cap (P\times I))} = k+1 \}
 $$
 is (up to some affine isomorphism) $\Delta^{k+1}$-admissible and $H_m(\sigma)$ is $C^1$ on every $P\in\mathcal{P}_{m,\Delta'}$, hence $h(\sigma)$ is $\prod_{\Delta'\in P_k}\mathcal{P}_{m,\Delta'}$-$C^1$. In particular $h(c)$ is piecewise $C^1$ if $c$ is.
 \end{proof}

\begin{prop}\label{homology_theories}
 Let $M$ be a complete Riemannian manifold with $\sec(M)<K<\infty$. Then the map $I_*:H^{PC^1,lf,\Lip}_*(M)\rightarrow H^{lf,\Lip}_*(M)$ induced by the inclusion of chains is an isometric isomorphism.
\end{prop}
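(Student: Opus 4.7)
The plan is to exhibit the piecewise straightening operator $\str_m$ as a homological inverse of $I_*$ and then extract the isometry statement from the bound $|\str_m|_1\leq 1$. By the remark preceding the proposition I may restrict to degrees $*\leq n$, where both $H^{PC^1,lf,\Lip}_*(M)$ and $H^{lf,\Lip}_*(M)$ are the only interesting objects.

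For surjectivity of $I_*$, I would start with a class $\xi\in H^{lf,\Lip}_*(M)$, pick any Lipschitz cycle $c$ representing it, and choose $m$ as in Lemma \ref{chain_homotopy_lemma} so that $\str_m(c)$ is defined and $\str_m$ is a chain map on $C^{lf,<L}_*(M)$ chain-homotopic (inside $C^{lf,\Lip}_*(M)$) to the inclusion. By construction $\str_m(c)$ is piecewise straight, and Lemma \ref{Loeh_positive} guarantees that small-diameter geodesic simplices are smooth, so $\str_m(c)$ lies in $C^{PC^1,lf,\Lip}_*(M)$. Since it is homologous to $c$ in $C^{lf,\Lip}_*(M)$, its class in piecewise $C^1$ homology maps under $I_*$ to $\xi$.

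For injectivity, let $c\in C^{PC^1,lf,\Lip}_*(M)$ be a cycle with $I_*[c]=0$, so that $c=\partial b$ for some $b\in C^{lf,\Lip}_*(M)$. Choose $m$ large enough that $\str_m$ is defined on $b$; then $\str_m(c)=\partial\str_m(b)$ because $\str_m$ is a chain map, and $\str_m(b)$ lies in $C^{PC^1,lf,\Lip}_*(M)$. Lemma \ref{piecewise_homotopy_lemma} then furnishes a piecewise $C^1$ Lipschitz chain realising a homology between $c$ and $\str_m(c)$. Thus $c$ itself bounds inside $C^{PC^1,lf,\Lip}_*(M)$, giving injectivity. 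Isometry is then essentially formal: the inequality $|I_*\xi|\leq |\xi|$ is immediate because every piecewise $C^1$ Lipschitz representative is a fortiori a Lipschitz representative, while for $\xi\in H^{PC^1,lf,\Lip}_*(M)$ and any Lipschitz representative $c$ of $I_*\xi$, the chain $\str_m(c)$ is (by the bijectivity just proved) a piecewise $C^1$ Lipschitz representative of $\xi$ with $|\str_m(c)|_1\leq |c|_1$; taking infimum over $c$ yields $|\xi|\leq |I_*\xi|$.

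The only step with any real content is injectivity, and what makes it work is that the prism chain homotopy between $\iota$ and $\str_m$ can be made to stay inside the piecewise $C^1$ subcomplex whenever it is fed a piecewise $C^1$ input. That is precisely the technical content of Lemma \ref{piecewise_homotopy_lemma}, where the prism decomposition is intersected with the barycentric subdivision mandated by $\str_m$. With that lemma in hand, the argument above is essentially bookkeeping with the identities $\str_m\circ\partial=\partial\circ\str_m$ and $|\str_m|_1\leq 1$ supplied by Lemma \ref{chain_homotopy_lemma}.
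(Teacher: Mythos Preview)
Your argument is correct and follows essentially the same route as the paper's. The paper cites Corollary~\ref{homology_representation} for surjectivity and the isometry statement rather than unpacking them, and phrases injectivity in the ``two cycles become homologous'' form rather than your ``kernel is zero'' form, but the machinery invoked (straighten the bounding chain, then use Lemma~\ref{piecewise_homotopy_lemma} to relate $c$ to $\str_m(c)$ inside the piecewise $C^1$ complex) is identical.
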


\begin{proof}
 The map $I_*$ is onto by Corollary~\ref{homology_representation}. To see that it is injective consider $c_1,c_2\in C_*^{PC^1,lf,\Lip}(M)$ which represent the same class in $H^{lf,\Lip}_*(M)$. Then there exists a chain $D\in C_{*+1}^{lf,\Lip}(M)$ such that $\partial D=c_2-c_1$. We can apply now the straightening procedure to $D$ to obtain a chain $\str_m(D)\in C_{*+1}^{PC^1,lf,\Lip}(M)$ for some $m\in\mathbb{N}$ such that $\partial \str_m(D) = \str_m(c_2)-\str_m(c_1)$. Now apply Lemma~\ref{piecewise_homotopy_lemma} to see that $c_1$ and $c_2$ are homologous (in $C_*^{PC^1,lf,\Lip}(M)$) to $\str_m(c_1)$, $\str_m(c_2)$ respectively. It is an isometry on homology by Corollary~\ref{homology_representation}.
\end{proof}

\subsection{Piecewise $C^1$ Measure Homology}\label{PSMH}
Now we turn our attention to chains with finite $\ell^1$-norm and corresponding measure homology theory.

\begin{defi}
 Let $C^{PC^1, \ell^1, \Lip}_*(M)$ be a chain subcomplex of $C^{PC^1,lf,\Lip}_*(M)$ consisting of chains which have finite $\ell^1$ norm. We call the homology of this complex \emph{piecewise $C^1$-$\ell^1$ Lipschitz homology} and denote it by $H_*^{PC^1,\ell^1,\Lip}(M)$.
\end{defi}

\begin{remark}
 Note that Lemma~\ref{piecewise_homotopy_lemma} also applies to $C^{PC^1, \ell^1, \Lip}_*(M)$, so an analogue of Proposition~\ref{homology_theories} for $H_*^{PC^1,\ell^1,\Lip}(M)$ is true.
\end{remark}

\begin{defi}
 Let $\mathcal{P}\in\mathscr{P}_k$ be a $\Delta^k$-admissible family and let $\mathcal{P}C^1(\Delta^k, M)$ be a set of singular simplices $\sigma:\Delta^k\rightarrow M$ such that $\sigma|_P$ is $C^1$ for every $P\in\mathcal{P}$. We call it the set of \emph{$\mathcal{P}$-$C^1$ singular simplices}. We equip it with the topology induced from the embedding onto a closed subspace
 $$
 \mathcal{P}C^1(\Delta^k, M)\rightarrow \prod_{P\in\mathcal{P}}C^1(P,M).
 $$
 Where $C^1(P,M)$ is a set of $C^1$ maps $P\rightarrow M$ with the topology induced from $Map(TP,TM)$ with compact-open topology. For every $\mathcal{P}_1,\mathcal{P}_2\in\mathscr{P}_k$ such that $\mathcal{P}_1\leq \mathcal{P}_2$ we have an embedding $\mathcal{P}_1C^1(\Delta^k, M)\rightarrow \mathcal{P}_2C^1(\Delta^k, M)$ onto a closed subset. We denote the direct limit of these spaces with weak topology as $\mathscr{P}C^1(\Delta^k,M)$.
\end{defi}

The properties of the above topology on $\mathcal{P}C^1(\Delta^k, M)$ for $\mathcal{P}\in\mathscr{P}_k$ which are crucial to us are the following
\begin{itemize}
 \item $\mathcal{P}C^1(\Delta^k, M)$ is a locally compact Hausdorff space;
 \item If $k=n=\dim M$ then for every differential form $\omega\in\Omega^n(M)$ the map
  $$
    I_\omega:\mathcal{P}C^1(\Delta^n,M)\rightarrow \mathbb{R}\;,\; f \mapsto \int_{\Delta^n} f^*\omega
  $$
 is continuous;
 \item for every $\sigma\in \mathcal{P}C^1(\Delta^k, M)$ the map
 $$
 Isom^+(M)\rightarrow \mathcal{P}C^1(\Delta^k, M)\;,\; g \mapsto g\sigma
 $$
 is continuous.
\end{itemize}

\begin{defi}
Let $\mathcal{C}_*^{PC^1,\Lip}(M)$ be a chain complex of measures on $\mathscr{P}C^1(\Delta^*,M)$ such that
\begin{enumerate}
 \item for every measure $\mu\in\mathcal{C}_*^{PC^1,\Lip}(M)$ there exists $\mathcal{P}\in\mathscr{P}_k$ such that it is a push-forward of a Borel measure on $\mathcal{P}C^1(\Delta^*,M)$ with finite variation;
 \item every measure has \emph{Lipschitz determination}, i.e. there exists $L<\infty$ such that it is supported on simplices with Lipschitz constant $L$.
\end{enumerate}
The boundary operators are the push-forwards of measures by the boundary maps $\partial:C_*^{PC^1,\ell^1,\Lip}(M)\rightarrow C^{PC^1,\ell^1,\Lip}_{*-1}(M)$. The obtained homology theory is called \emph{piecewise $C^1$ measure homology with Lipschitz determination} $\mathcal{H}_*^{PC^1,\Lip}(M)$.
\end{defi}

\begin{remark}
 The space $\mathscr{P}C^1(\Delta^*,M)$ is in general not locally compact, therefore it is a problem with the definition of Borel measures. However, we will say for simplicity that measures in $\mathcal{C}_*^{PC^1,\Lip}(M)$ are Borel meaning that every such measure is a push-forward of a Borel measure on $\mathcal{P}C^1(\Delta^*,M)$ for some $\mathcal{P}\in\mathscr{P}_k$. Similarly, when integrating over $\mathscr{P}C^1(\Delta^*,M)$, we will understand it as an integral over $\mathcal{P}C^1(\Delta^*,M)$ for some 'sufficiently large' $\mathcal{P}\in\mathscr{P}_k$.
\end{remark}
 
The above homology theory is a variant of Milnor-Thurston homology. We can introduce a semi-norm $\|\cdot\|_1$ on it by taking infimum of absolute variations over all measures representing given homology class. An important consequence of the above construction is the following

\begin{prop}\label{measure_hom_theories}
 Let $M$ be a complete Riemannian manifold with $\sec(M)<K<\infty$. Then the homology groups $H_*^{PC^1, \ell^1,\Lip}(M)$ and $\mathcal{H}_*^{PC^1,\Lip}(M)$ are isometrically isomorphic.
\end{prop}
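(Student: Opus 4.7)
The plan is to show that the natural chain map $\Phi\colon C^{PC^1,\ell^1,\Lip}_*(M)\to\mathcal{C}^{PC^1,\Lip}_*(M)$, $\sum_i a_i\sigma_i\mapsto\sum_i a_i\delta_{\sigma_i}$, induces the desired isometric isomorphism. It is immediate that $\Phi$ is a chain map and that $|\Phi(c)|\leq|c|_1$, so what remains is to establish bijectivity of $\Phi_*$ on homology and the reverse inequality for semi-norms.

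My first step would be to lift the piecewise straightening procedure of Section~\ref{PSP} to the measure-chain level. For each $L<\infty$ and $m=m(L,n,K)$ as in Lemma~\ref{chain_homotopy_lemma}, the operations $\str_m$ and the associated chain homotopy $h$ are built from barycentric subdivision, choice of lift, and the Borel sections furnished by Lemma~\ref{admissible_lift_lemma}. Each of these steps depends in a Borel way on the input simplex in the topology on $\mathcal{P}C^1(\Delta^k,M)$, and preserves both piecewise $C^1$ regularity and uniform Lipschitz bounds. Push-forward of measures along the resulting Borel maps then yields operators
\begin{equation*}
\str_m\colon \mathcal{C}^{PC^1,\Lip,<L}_{*\leq n}(M)\to\mathcal{C}^{PC^1,\Lip}_*(M),\qquad
h\colon \mathcal{C}^{PC^1,\Lip,<L}_{*\leq n}(M)\to\mathcal{C}^{PC^1,\Lip}_{*+1}(M),
\end{equation*}
with $|\str_m(\mu)|\leq|\mu|$ and $\str_m$ chain-homotopic (via $h$) to the inclusion.

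The critical observation is then that any piecewise straight simplex produced by $\str_m$ is determined by its finite tuple of vertices drawn from the countable discrete set $(z_j)_{j\in J}$, together with the combinatorial data of the iterated barycentric subdivision and the chosen vertex orderings. Hence the image of $\str_m$, viewed as a subset of $\mathscr{P}C^1(\Delta^k,M)$, is countable, so any finite-variation Borel measure supported on it is automatically atomic; i.e.\ $\str_m(\mu)=\Phi(c_\mu)$ for a unique $c_\mu\in C^{PC^1,\ell^1,\Lip}_*(M)$ with $|c_\mu|_1=|\str_m(\mu)|\leq|\mu|$. Surjectivity of $\Phi_*$ follows because any measure cycle $\mu$ is homologous to $\str_m(\mu)=\Phi(c_\mu)$, and taking infimum over such $\mu$ yields the reverse norm inequality. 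For injectivity, if $\Phi(c)=\partial\mu$ for some chain $c$ and measure $\mu$, choose $m$ large enough to cover the Lipschitz determinations of both; the identity $\str_m\circ\Phi=\Phi\circ\str_m$ on atomic measures gives $\Phi(\str_m(c))=\partial\Phi(c_\mu)=\Phi(\partial c_\mu)$, so by injectivity of $\Phi$ on chains $\str_m(c)=\partial c_\mu$, and the piecewise $C^1$-$\ell^1$ analogue of Lemma~\ref{piecewise_homotopy_lemma} identifies $[c]=[\str_m(c)]=0$.

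The main obstacle I anticipate is technical: verifying that the straightening construction genuinely defines a Borel map on each $\mathcal{P}C^1(\Delta^k,M)$ with outputs of uniformly bounded Lipschitz constant, so that the push-forward measure lives in $\mathcal{C}^{PC^1,\Lip}_*(M)$ and retains Lipschitz determination. The delicate point is the measurable selection of lifts and of the admissible sections $s'_{i,l}$, which depends on how the vertices of the subdivided lift fall within the Borel partition $(F_j)_{j\in J}$. Once this Borel regularity and the uniform Lipschitz bound are secured, the remainder of the argument is a direct translation of the proof of Proposition~\ref{homology_theories} into the measure-chain setting.
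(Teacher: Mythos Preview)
Your proposal is correct and follows essentially the same route as the paper's proof. Both arguments use the natural inclusion of atomic measures, push the straightening operator $\str_m$ (and its chain homotopy $h$) forward to measure chains, and exploit that the image of $\str_m$ consists of countably many piecewise straight simplices so that the straightened measure is atomic; injectivity and the isometry then follow exactly as you outline via Lemma~\ref{piecewise_homotopy_lemma}. The paper is terser---it defines the coefficients directly as $a_i=\mu(\str_m^{-1}(\sigma_i))$ and asserts Borel measurability ``by the construction of sets $(F_j)_{j\in J}$'' rather than framing $\str_m$ as a push-forward along a Borel map---but this is only a difference in presentation, and the technical concern you flag about Borel dependence on the partition $(F_j)$ is precisely the point the paper is gesturing at.
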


\begin{proof}
 By interpreting singular chains with finite $\ell^1$-norm as discrete measures with finite variation we have an obvious inclusion of chains $I:C_*^{PC^1,\ell^1,\Lip}(M)\rightarrow \mathcal{C}_*^{PC^1,\Lip}(M)$ which commutes with differentials, hence it is a morphism of chain complexes and induces a homomorphism $I_*$ on homology.
 
 To show that $I_*$ is surjective, let $\mu\in\mathcal{C}_*^{PC^1,\Lip}(M)$ be a measure cycle determined in $L$-Lipschitz simplices. Choose any family $(F_j)_{j\in J}$ of Borel subsets of $M$ with the properties indicated in the description of the piecewise straightening procedure and $m\in\mathbb{N}$ such that $\str_m$ is defined for any simplex with Lipschitz constant $L$. Then after applying $\str_m$ to the measure $\mu$ we obtain a cycle $\sum_{i\in I}a_i\sigma_i$, where each $\sigma_i, i\in I$ is an $m$-piecewise straight simplex and
 $$
 a_i=\mu(\{\sigma\in \mathscr{P}C^1(\Delta^*,M);\; \Lip(\sigma)\leq L,\; \str_m(\sigma)=\sigma_i\}).
 $$
 The subset of $\mathscr{P}C^1(\Delta^*,M)$ described above is Borel by the construction of sets $(F_j)_{j\in J}$, so the cycle is well defined. It is also piecewise smooth by Proposition~\ref{Lipschitz_prop}, locally finite by the local finiteness of $(F_j)_{j\in J}$ and Lipschitz by Proposition~\ref{Lipschitz_prop} and Lipschitz determination of $\mu$. It is also easy to see that $\mu$ and $\str_m(\mu)$ are homologous in $\mathcal{C}_*^{b1,\Lip}(M)$ by the same argument as in Lemma~\ref{piecewise_homotopy_lemma}.
 
 The injectivity of $I_*$ can be shown using the similar argument applied to the homotopy in $\mathcal{C}^{PC^1,\Lip}_*(M)$ between two cycles in $C_*^{PC^1,\ell^1,\Lip}(M)$ and Lemma~\ref{piecewise_homotopy_lemma}. The fact that $I_*$ is an isometry is a consequence of the facts that $I$ is an isometric inclusion and that the straightening procedure does not increase the norm.
\end{proof}

\begin{remark}
  The existence of an isometric isomorphism as above for 'finite' piecewise $C^1$ theory $H^{PC^1}_*(M)$ and piecewise $C^1$ measure homology with compact supports $\mathcal{H}^{PC^1}_*(M)$ can be proved without any curvature assumptions as in \cite{LoM}. However, the proof given in \cite{LoM} depends heavily on bounded cohomology and cannot be easily generalised to the locally finite Lipschitz case.
\end{remark}

\section{Applications}\label{SecApp}

\subsection{Product inequality}
There is a classical result concerning the behaviour of simplicial volume under taking products. Namely, if $M$ and $N$ are compact manifolds of dimensions $m$ and $n$ respectively there are inequalities (see \cite{G} for more details)
$$
\|M\|\cdot\|N\| \leq \|M\times N\| \leq {m+n \choose m}\|M\|\cdot\|N\|.
$$
The second inequality is obtained by simply taking a simplicial approximation of a cross product and can be easily generalised to the noncompact case. On the other hand, first inequality can be established by passing to bounded cohomology and using the duality between $\ell^1$ semi-norm on homology and $\ell^\infty$ semi-norm on cohomology. However, this approach does not generalize directly to the case of noncompact manifolds and Lipschitz simplicial volume (and in general is false in noncompact, non-Lipschitz case). Two main problems which arise are more subtle relation between $\ell^1$ semi-norm on locally finite homology and $\ell^\infty$ semi-norm on cohomology with compact supports and the existence of a good product in cohomology with compact supports. However, for the Lipschitz simplicial volume the inequality was proved in the case of complete, non-positively curved Riemannian manifolds \cite[Theorem 1.7]{LS}. Using piecewise straightening procedure, we are able to generalize it slightly and obtain Theorem~\ref{PI1}.

The proof is a modification of the proof from \cite{LS} with one proposition generalised to the case of bounded positive curvature. We introduce necessary notions and facts. By $S_k^{lf,\Lip}(M)$ we denote a family of subsets of $Map(\Delta^k,M)$ such that each element $A\in S_k^{lf,\Lip}$ is locally finite (in the sense that for any given compact subset $K\subset M$ we have $\#\{\sigma\in A\;:\; \sigma\cap K\neq \emptyset \}<\infty$) and consists of $L$-Lipschitz simplices for some $L$, depending on $A$. We recall the most important definitions and results from \cite{LS}.

\begin{defi}[{\cite[Definition 3.11]{LS}}]
 Let $M$ be a topological space, $k\in\mathbb{N}$, and let $A\subset Map(\Delta^k,M)$.
 \begin{enumerate}
  \item For a locally finite chain $c=\sum_{i\in I}a_i\sigma_i\in C^{lf}_k(M)$, let
  $$
  |c|^A_1 = \begin{cases}|c|_1 & \text{if $\supp(c)\subset A$,} \\ \infty & \text{otherwise.}\end{cases}
  $$
  Here, $\supp(c):=\{\sigma_i;\; i\in I,a_i\neq 0\}$.
  
  \item The semi-norms on (Lipschitz) locally finite homology induced by $|\cdot|^A_1$ are denoted by $\|\cdot\|^A_1$.
  
  \item If $M$ is an oriented, connected $n$-manifold, then
  $$
  \|M\|^A:=\|[M]\|^A_1.
  $$
  \end{enumerate}
\end{defi}

\begin{defi}[{\cite[Definition 3.19]{LS}}]
 Let $M$ and $N$ be two topological spaces, and let $k,l\in\mathbb{N}$. A~locally finite set $A\in S^{lf}_{k+l}(M\times N)$ is called \emph{$(k,l)$-sparse} if
 $$
 \begin{matrix}
 A_M: = \{\pi_M\circ\sigma\rfloor_k;\; \sigma\in A\}\in S^{lf}_k(M) & \text{     and     } & A_N:=\{\pi_N\circ{}_l\lfloor\sigma;\; \sigma\in A\}\in S^{lf}_l(N)
 \end{matrix}
 $$
 where $\sigma\rfloor_k$ is an $k$-dimensional face of $\sigma$ spanned by the last $k$ vertices, ${}_l\lfloor\sigma$ is an $l$-dimensional face of $\sigma$ spanned by the first $l$ vertices and $\pi_M:M\times N\rightarrow M$ and $\pi_N:M\times N\rightarrow N$ are the canonical projections.
 
 A locally finite chain $c\in C^{lf}_{k+l}(M\times N)$ is called \emph{$(k,l)$-sparse} if its support is $(k,l)$-sparse.
\end{defi}

The proof of product inequality given in \cite{LS} uses non-positive curvature only to prove that for two non-positively curved manifolds the simplicial volume can be computed on sparse cycles. The following proposition is not stated as such in \cite{LS}, it is, however, a meta-theorem actually proved there.

\begin{prop}
 Let $M$ and $N$ be two complete, oriented manifolds of dimensions $m$ and $n$ respectively such that the Lipschitz simplicial volume of $M\times N$ can be computed via $(m,n)$-sparse fundamental cycles, i.e.
 $$
 \|M\times N\|_{\Lip} = \inf\{\|M\times N\|^A;\; A\in S^{lf,\Lip}_{m+n}(M\times N),\; \text{$A$ is $(m,n)$-sparse}\}.
 $$
 Then
 $$
 \|M\|_{\Lip}\cdot\|N\|_{\Lip}\leq \|M\times N\|_{\Lip}.
 $$
\end{prop}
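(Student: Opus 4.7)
Plan: Start with an $(m,n)$-sparse piecewise-$C^1$ Lipschitz locally finite fundamental cycle $c = \sum_{i \in I} a_i \sigma_i$ of $M \times N$ with $|c|_1 < \|M \times N\|_{\Lip} + \varepsilon$ (existence by the hypothesis, piecewise smoothness by applying the straightening $\str_m$ of Lemma~\ref{chain_homotopy_lemma}). The strategy is to extract from $c$ fundamental cycles of $M$ and of $N$ through Alexander--Whitney-type slants with compactly supported forms on the opposite factor, and then to combine the two resulting bounds via a Fubini integration that exploits sparsity.

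For each $i \in I$, set $\sigma_i^M := \pi_M \circ \sigma_i\rfloor_m$ and $\sigma_i^N := \pi_N \circ {}_n\lfloor\sigma_i$; sparsity makes the families $\{\sigma_i^M\}_{i \in I}$ and $\{\sigma_i^N\}_{i \in I}$ locally finite and uniformly Lipschitz. For a smooth compactly supported top-form $\omega_N$ on $N$ with $\int_N \omega_N = 1$ I would form the slant
\[
c_M(\omega_N) := \sum_{i \in I} a_i \Bigl(\int_{\Delta^n}(\sigma_i^N)^* \omega_N\Bigr) \sigma_i^M,
\]
and verify, using $\partial c = 0$, Stokes' theorem, and the Alexander--Whitney identity for $\partial \sigma_i$, that it is a cycle; pairing with a compactly supported $m$-cochain on $M$ and invoking the K\"unneth identification $[M \times N] = [M] \times [N]$ would then identify its homology class with $\bigl(\int_N \omega_N\bigr) [M] = [M]$. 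Symmetrically, for $\omega_M \in \Omega^m_c(M)$ with $\int_M \omega_M = 1$, the chain $c_N(\omega_M) := \sum_i a_i \bigl(\int_{\Delta^m}(\sigma_i^M)^* \omega_M\bigr) \sigma_i^N$ is a fundamental cycle of $N$, yielding
\[
\|M\|_{\Lip} \le |c_M(\omega_N)|_1 \qquad \text{and} \qquad \|N\|_{\Lip} \le |c_N(\omega_M)|_1.
\]

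To combine these estimates into the product inequality, I would let $\omega_M^{(k)}, \omega_N^{(k)}$ approach Dirac masses at generic points $(x_k, y_k) \in M \times N$ and average the resulting bounds against a product measure via Fubini. Sparsity ensures that a generic $(x,y) \in M \times N$ lies in the image of at most one simplex $\sigma_i$ in both its $M$-face at $x$ and its $N$-face at $y$, so the double integral of the $|a_i|$-contributions collapses to $\sum_i |a_i| = |c|_1$, giving $\|M\|_{\Lip} \cdot \|N\|_{\Lip} \le |c|_1 < \|M \times N\|_{\Lip} + \varepsilon$, and $\varepsilon \to 0$ finishes the argument. The main obstacle is twofold. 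First, for non-compact $M$ a slant against a compactly supported $\omega_N$ a priori produces a \emph{finite} chain, which is null-homologous in $H^{lf}_m(M)$; one must therefore sweep $\supp(\omega_N)$ across $N$ (or dualize the construction) so that the assembled slants represent the genuinely locally finite fundamental class of $M$. Second, proving the Fubini collapse without accumulating a stray combinatorial factor such as $\binom{m+n}{m}$ is precisely the step where $(m,n)$-sparsity is indispensable — and its failure for arbitrary locally finite cycles is exactly the reason why the product inequality fails without the Lipschitz hypothesis.
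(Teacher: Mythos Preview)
Your approach is quite different from the paper's, and the central step---the ``Fubini collapse''---has a genuine gap.

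The paper does not work on the chain level at all. It passes to cohomology with Lipschitz compact supports $H^*_{cs,\Lip}$, where for any $A\in S^{lf,\Lip}_k(X)$ one has the duality $\|\alpha\|_1^A = 1/\|\alpha^*\|_\infty^A$ \cite[Proposition~3.12]{LS}. The cross product of cochains satisfies the trivial submultiplicativity $\|\phi\times\psi\|^A_\infty \le \|\phi\|^{A_M}_\infty\cdot\|\psi\|^{A_N}_\infty$, and sparsity is used exactly once: it guarantees $A_M\in S^{lf,\Lip}_m(M)$ and $A_N\in S^{lf,\Lip}_n(N)$, so that the duality applies on each factor. Chaining the dualities gives
\[
\|M\times N\|^A=\frac{1}{\|[M\times N]^*\|^A_\infty}\ge \frac{1}{\|[M]^*\|^{A_M}_\infty\cdot\|[N]^*\|^{A_N}_\infty}=\|M\|^{A_M}\cdot\|N\|^{A_N}\ge\|M\|_{\Lip}\cdot\|N\|_{\Lip}.
\]

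Your direct approach via slants breaks at the step where you multiply the two bounds. From $\|M\|_{\Lip}\le\sum_i|a_i|\,|\langle\omega_N,\sigma_i^N\rangle|$ and $\|N\|_{\Lip}\le\sum_j|a_j|\,|\langle\omega_M,\sigma_j^M\rangle|$ you obtain a \emph{double} sum $\sum_{i,j}|a_i||a_j|\cdots$ on the right, and no averaging over $(x,y)$ turns this into $\sum_i|a_i|$. Concretely, if $\omega_M,\omega_N$ approximate Dirac masses at $(x,y)$, the right-hand side of the product becomes (up to degrees) $\bigl(\sum_i|a_i|\,[\,y\in\sigma_i^N(\Delta^n)\,]\bigr)\bigl(\sum_j|a_j|\,[\,x\in\sigma_j^M(\Delta^m)\,]\bigr)$; integrating in $(x,y)$ still gives a product of two sums, not $|c|_1$. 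Your claimed consequence of sparsity---that a generic $(x,y)$ is hit by at most one $\sigma_i$ in both faces---is neither what sparsity says (it only asserts local finiteness of the projected supports $A_M$, $A_N$) nor sufficient to collapse the double sum even when true.

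Your slant idea is salvageable, but only by moving in the opposite direction: instead of Dirac-like $\omega_N$, choose a cocycle $\psi$ (with Lipschitz compact support) representing $[N]^*$ with $\|\psi\|_\infty^{A_N}$ close to $1/\|N\|^{A_N}$. Then $|c_M(\psi)|_1\le \|\psi\|_\infty^{A_N}\cdot|c|_1$ is the one-line estimate that gives $\|M\|^{A_M}\cdot\|N\|^{A_N}\le|c|_1$. But this is precisely the duality argument of the paper, restated on the chain side; the existence of such a near-optimal $\psi$ with Lipschitz compact support is exactly where \cite[Proposition~3.12]{LS} and the sparsity hypothesis enter.
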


The outline of the proof is as follows. Consider cohomology with Lipschitz compact supports $H^*_{cs,\Lip}$, i.e. cohomology of cochain complex consisting of those singular cochains for which there exists a compact set $K$ and constant $L$ such that the evaluation on a simplex $\sigma$ is $0$ if $\sigma$ has image disjoint from $K$ and Lipschitz constant less than $L$. For a given space $X$ and family $A\in S^{lf,\Lip}_k(X)$ an $\ell^\infty$ semi-norm $\|\cdot\|^A_\infty$ on cohomology with Lipschitz compact support computed on $A$ is dual to the $\ell_1$ semi-norm $\|\cdot\|_1^A$ on Lipschitz locally finite homology \cite[Proposition 3.12]{LS}. Therefore one can compute a Lipschitz simplicial volume using a dual point of view. Moreover for two cochains with Lipschitz compact supports on $M$ and $N$ we can define their product on $M\times N$  which also has Lipschitz compact support \cite[Lemma 3.15]{LS}. Finally, if $A\in S^{lf,\Lip}_{m+n}(M\times N)$ is $(m,n)$-sparse and $A_M$, $A_N$ are corresponding projections of $A$ to $S^{lf,\Lip}_m(M)$ and $S^{lf,\Lip}_n(N)$, then we have a product inequality \cite[Remark 3.17]{LS}:
$$
\|\phi\times\psi\|^A_\infty \leq \|\phi\|^{A_M}_\infty \cdot \|\psi\|^{A_N}_\infty
$$
for $\phi\in H^m_{cs,\Lip}(M)$ and $\psi\in H^n_{cs,\Lip}(N)$. In particular if $A$ is $(m,n)$-sparse we obtain
$$
\|M\times N\|^A=\frac{1}{\|[M\times N]^*\|^A_\infty}\geq \frac{1}{\|[M]^*\|^{A_M}_\infty\cdot\|[N]^*\|^{A_N}_\infty}=\|M\|^{A_M}\cdot\|N\|^{A_N}\geq\|M\|_{\Lip}\
\cdot\|N\|_{\Lip},
$$
hence if the simplicial volume of $M\times N$ can be computed on sparse cycles we get the desired inequality.

To finish the proof of Theorem \ref{PI1} we will prove the following proposition, which is a generalization of Proposition 3.20 in \cite{LS}, where it was proved assuming non-positive curvature.

\begin{prop}
 Let $M$ and $N$ be two oriented, connected, complete Riemannian manifolds (without boundary) of dimensions $m$ and $n$ respectively with sectional curvatures bounded from above by $0<K<\infty$.
 \begin{enumerate}
  \item Let $k,l\in\mathbb{N}$. For any cycle $c\in C_{k+l}^{lf,\Lip}(M\times N)$ there is a $(k,l)$-sparse cycle $c'\in C_{k+l}^{lf,\Lip}(M\times N)$ satisfying
  $$
  \begin{matrix}
   |c'|_1\leq |c|_1 & \text{     and     } & c\sim c' \text{ in } C_{k+l}^{lf,\Lip}(M\times N).
  \end{matrix}
  $$
  \item In particular, the Lipschitz simplicial volume can be computed via sparse fundamental cycles, i.e.
  $$
   \|M\times N\|_{\Lip} = \inf\{\|M\times N\|^A;\; A\in S^{lf,\Lip}_{m+n}(M\times N),\; \text{$A$ is $(m,n)$-sparse}\}.
  $$
 \end{enumerate}
\end{prop}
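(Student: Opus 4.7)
The approach mirrors the proof of Proposition~3.20 in \cite{LS}, with the classical straightening replaced by the piecewise straightening procedure developed in Section~\ref{SecPiece}. The essential point is that a piecewise straight simplex is rigidly determined (up to the canonical preimage under iterated barycentric subdivision) by the vertex data of its small geodesic pieces, and that the vertex set can be chosen to respect the product structure of $M\times N$. Part (2) is a formal consequence of part (1) applied to fundamental cycles.

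For part (1), the first step is to construct a product-compatible Borel decomposition of $M\times N$ suited for straightening. Starting from families $(F^M_j, z^M_j, s^M_j)_{j\in J_M}$ on $M$ and $(F^N_l, z^N_l, s^N_l)_{l\in J_N}$ on $N$ as in Section~\ref{PSP} (chosen so that the diameters fit the constant $E_{m+n,K}$ of the product), the sets $F^M_j\times F^N_l$ with distinguished points $(z^M_j,z^N_l)$ and the product sections $s^M_j\times s^N_l$ form an admissible decomposition of $M\times N$. This is legitimate because $\sec(M\times N)\leq K$, and the distinguished point set projects onto the locally finite sets $\{z^M_j\}_{j\in J_M}\subset M$ and $\{z^N_l\}_{l\in J_N}\subset N$. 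Applying $\str_m$ for $m$ large enough (depending only on $\Lip(c)$, $K$, $m+n$) I obtain $c'=\str_m(c)$ with $|c'|_1\leq|c|_1$ and $c'\sim c$ in $C^{lf,\Lip}_{k+l}(M\times N)$ by Lemma~\ref{chain_homotopy_lemma}.

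The core step is to verify that $A=\supp(c')$ is $(k,l)$-sparse. Fix a compact $K\subset M$; I want to show that only finitely many distinct $\pi_M\circ\sigma\rfloor_k$ with $\sigma\in A$ meet $K$. Since $c$ has a uniform Lipschitz bound $L<\infty$, so does $c'$ with some bound $L'$ depending on $L$, $K$ and $m$ by Proposition~\ref{Lipschitz_prop} and the construction of $\str_m$. In particular, the image of every $\sigma\in A$ has diameter at most $L'\sqrt{2}$ in $M\times N$, so the $M$-projections of \emph{all} vertices of the small geodesic pieces appearing in $S^{(m)}(\sigma)$ are contained in a single ball of radius $L'\sqrt{2}$ in $M$. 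If $\pi_M(\sigma\rfloor_k)$ meets $K$, this ball must intersect $K$, hence all these $M$-projected vertices lie inside the compact set $B_M(K,2L'\sqrt{2})$. Local finiteness of $\{z^M_j\}$ in $M$ then bounds the number of possible $M$-vertex tuples by a finite constant, and because a piecewise straight simplex's $M$-projection is determined by its $M$-vertex configuration (via the canonical preimage in $(S^{(m)})^{-1}$), only finitely many distinct $\pi_M\circ\sigma\rfloor_k$ can meet $K$. The argument for $A_N$ is symmetric.

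For part (2), given any Lipschitz fundamental cycle $c$ one applies (1) with $k=m$, $l=n$ to produce a homologous $(m,n)$-sparse Lipschitz fundamental cycle $c'$ with $|c'|_1\leq |c|_1$; the reverse inequality is trivial, proving the equality. The main obstacle I anticipate is bookkeeping rather than conceptual: one must check that the product decomposition genuinely fits the framework of Section~\ref{PSP}, in particular that the injectivity radius and curvature conditions on $V_{(z^M_j,z^N_l)}$ transport correctly from the factors, so that the exponential neighbourhood constants on $M\times N$ can be taken compatibly with those on $M$ and $N$. Once this is verified, both the straightening and the diameter-bound argument above proceed without further difficulty.
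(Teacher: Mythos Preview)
Your proposal is correct and follows essentially the same route as the paper: choose factor decompositions $(F^M_j)$, $(F^N_l)$ with half-size diameters, form the product family on $M\times N$, apply the piecewise straightening $\str_m$, and deduce sparseness from local finiteness of the vertex sets together with the uniform Lipschitz bound. The only cosmetic difference is that the paper packages the sparseness claim via the ambient family $A_{L,p}$ of all $p$-piecewise straight $L$-Lipschitz simplices, whereas you argue directly for $\supp(c')$; both rely on the same observation that the $M$-projection of a product-geodesic piece is itself a geodesic piece in $V_x$ determined by the $M$-components of its vertices.
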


\begin{proof}
 The second statement is a direct consequence of the first one. To prove the first one it is enough to just apply straightening procedure, but with more carefully chosen sets $(F_j)_{j\in J}$. Choose a family of Borel subsets $(F^M_j)_{j\in J^M}$ of $M$ together with the points $(z^M_j)_{j\in J^M}$ and sections $(s_j^M)_{j\in J}$ with all the properties indicated in the description of the straightening procedure, but with the additional assumption that $\diam(F^M_j)<\frac{E_{m+n,K}}{2}$ and $s^M_j:F_j\rightarrow B_{V_{z^M_j}}(\wt{z_j}^M,\frac{E_{m+n,K}}{2})$ for every $j\in J^M$. Similarly choose a family $(F^N_j)_{j\in J^N}$ of Borel subsets of $N$ together with points $(z_j^{N})_{j\in J^N}$ and sections $(s_j^N)_{j\in J^N}$ and as a base of the straightening procedure for $M\times N$ take a family $(F^M_{j_1}\times F^N_{j_2})_{(j_1,j_2)\in J^M\times J^N}$ together with points $(z^M_{j_1},z^N_{j_2})_{(j_1,j_2)\in J^M\times J^N}$ and sections $(s_{j_1}\times s_{j_2})_{(j_1,j_2)\in J^M\times J^N}$. This family is locally finite, satisfying $\diam(F^M_{j_1}\times F^N_{j_2})<E_{m+n,K}$ and $s_{j_1}\times s_{j_2}:(F_{j_1}\times F_{j_2})\rightarrow V_{(z^M_{j_1},z^N_{j_2})}(E_{m+n,K})$ for every $(j_1,j_2)\in J^M\times J^N$. Hence if $c\in C^{lf,\Lip}_k(M\times N)$ is any locally finite Lipschitz chain it can be straightened with respect to that family. Note also that for any $L<\infty$ and $p\in\mathbb{N}$ the family
 $$
 A_{L,p}:=\{\sigma\in Map(\Delta^{k+l},M\times N);\;\Lip(\sigma)\leq L;\; \sigma \text{ is $p$-piecewise straight simplex}\}
 $$
 belongs to $S^{lf,\Lip}_{k+l}(M\times N)$ and is $(k,l)$-sparse by the construction of $(F^M_{j_1}\times F^N_{j_2})_{(j_1,j_2)\in J^M\times J^N}$ and Lipschitz condition. To finish the proof note that $c\sim \str_p(c)$ for some $p\in\mathbb{N}$, $|c|_1\geq |\str_p(c)|_1$ and $\str_p(c)$ has a support in $A_{L,p}$ for some $L$, thus it is $(k,l)$-sparse.
\end{proof}

\subsection{Proportionality Principle}
Another result obtained in \cite{LS} for non-positively curved manifolds is the proportionality principle for the Lipschitz simplicial volume. We generalize it here and prove Theorem \ref{PP1}. As a corollary we obtain a proof of Theorem \ref{PP0}, based on Thurston's approach \cite{T}, but slightly different from the proof given in \cite{Str}.

The idea of the original proof is as follows. Using the common universal cover one can construct a 'smearing map' from $C^1$-$\ell^1$ locally finite Lipschitz chain complex on $M$ into the chain complex of Borel measures on $C^1(\Delta^*,N)$ with finite variation and Lipschitz determination. This map does not increase the norm and has the property that the image of locally finite real fundamental class of $M$ maps to a (measure) fundamental class of $N$ multiplied by $\frac{\vol(M)}{\vol(N)}$ (or more precisely a measure cycle such that every singular chain homologous to it, if it exists, is an indicated multiplicity of a fundamental cycle). Moreover, the image of this map can be approximated 'isometrically' by a singular locally finite Lipschitz cycle, which finishes the proof.

The usage of $C^1$ chains and measures is strictly technical and is used to recognise the image of the smearing map. In our approach we cannot use $C^1$ chains and measures, however, piecewise $C^1$ chains have all required properties. The following propositions are either taken from~\cite{LS} or are slight modifications of those with the same proofs. For a Riemannian $n$-manifold we will denote by $\dvol_M\in \Omega^n(M)$ the volume form on $M$. Recall that we are able to integrate over $\dvol_M$ not only Lipschitz chains (via Rademacher's theorem), but also Borel measures on $C^1(\Delta^n,M)$ and piecewise $C^1$ measures on $\mathscr{P}C^1(\Delta^n,M)$ via the formula
$$
\int_M \mu\, \dvol_M := \int_{\mathscr{P}C^1(\Delta^n,M)}\int_{\Delta^n} \sigma^*\,\dvol_M\, d\mu(\sigma) 
$$
for $\mu\in\mathcal{C}^{PC^1,\Lip}_*(M)$. We will denote the evaluation of $\dvol_M$ on simplex, chain or measure by $\langle \dvol_M,\cdot\rangle$.

\begin{prop}[{\cite[Proposition 4.4]{LS}}]\label{recognise_class}
 Let $M$ be a Riemannian $n$-manifold, and let $c=\sum_{k\in\mathbb{N}}a_k\sigma_k\in C^{lf}_n(M)$ be a cycle with $|c|_1<\infty$ and $\Lip(c)<\infty$.
 \begin{enumerate}
  \item Then $\langle \dvol_M,\sigma_k\rangle\leq \Lip(c)^n\vol(\Delta^n)$ for every $k\in\mathbb{N}$
  \item Furthermore, we have the following equivalence:
  $$
  \sum_{k\in\mathbb{N}}a_k\cdot\langle \dvol_M,\sigma_k\rangle = \vol(M) \;\Leftrightarrow\;\text{$c$ is a fundamental cycle}.
  $$
 \end{enumerate}
\end{prop}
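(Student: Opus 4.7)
The plan is to treat (1) by a direct pointwise Rademacher-type estimate, and then deduce (2) by showing that the pairing $c\mapsto\sum_k a_k\langle\dvol_M,\sigma_k\rangle$ descends to a well-defined linear functional on $H_n^{lf}(M;\mathbb{R})$, which is one-dimensional and detects the fundamental class up to a scalar.

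For (1), I would use that $\sigma_k:\Delta^n\to M$ is Lipschitz with constant at most $\Lip(c)$, so by Rademacher's theorem it is differentiable almost everywhere on $\Delta^n$, with operator norm of its differential bounded a.e.\ by $\Lip(c)$. Since both $\Delta^n$ and $M$ are $n$-dimensional, the Jacobian satisfies $|\det(D\sigma_k)|\leq\Lip(c)^n$ a.e., giving the pointwise bound $|\sigma_k^*\dvol_M|\leq\Lip(c)^n\dvol_{\Delta^n}$. Integrating over $\Delta^n$ then yields $\langle\dvol_M,\sigma_k\rangle\leq\Lip(c)^n\vol(\Delta^n)$.

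For (2), I would define $I(c):=\sum_k a_k\langle\dvol_M,\sigma_k\rangle$; part (1) together with $|c|_1<\infty$ makes this series absolutely convergent, with $|I(c)|\leq|c|_1\cdot\Lip(c)^n\vol(\Delta^n)$. The crucial step is to show that $I$ depends only on the class $[c]\in H_n^{lf}(M;\mathbb{R})$: for a Lipschitz $(n+1)$-simplex $\tau$, Stokes' theorem (valid via Rademacher) combined with $d\dvol_M=0$ gives $\langle\dvol_M,\partial\tau\rangle=0$; if one can legitimize summing this identity over a locally finite bounding chain, one concludes $I(\partial D)=0$ for any sufficiently regular $D$. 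Since $M$ is a connected oriented $n$-manifold, $H_n^{lf}(M;\mathbb{R})\cong\mathbb{R}\cdot[M]$, and evaluating $I$ on any smooth fundamental cycle obtained from a triangulation shows $I([M])=\vol(M)$. Writing $[c]=\lambda\cdot[M]$, one then gets $I(c)=\lambda\cdot\vol(M)$, so the equivalence $I(c)=\vol(M)\iff\lambda=1\iff c$ is a fundamental cycle follows immediately.

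The hard part will be the termwise Stokes/rearrangement argument in the locally finite, $\ell^1$, Lipschitz setting, namely justifying that the chain $c$ with $|c|_1<\infty$ and $\Lip(c)<\infty$ really represents a well-defined multiple of $[M]$ for the purposes of the integration pairing. The $\ell^1$-hypothesis plus the uniform Lipschitz control, together with the pointwise bound from (1), should provide the absolute convergence needed to legitimize the rearrangement; all other ingredients (Rademacher, closedness of $\dvol_M$ as a top form, and the classical computation of top-dimensional locally finite homology of an oriented manifold) are standard.
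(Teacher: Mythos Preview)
The paper does not prove this proposition; it is cited verbatim from \cite[Proposition~4.4]{LS}, so there is no argument here to compare against and your outline must stand on its own. Part~(1) is correct: Rademacher plus the pointwise Jacobian bound is exactly the argument.

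For part~(2) your strategy is right, but the step you flag as ``the hard part'' contains a genuine gap that your proposed fix does not address. The problem is not absolute convergence of $I(c)$ --- that is immediate from~(1) and $|c|_1<\infty$ --- but the regularity of the \emph{bounding} chain. If $c'$ is a smooth fundamental cycle with $[c]=[c']$ in $H_n^{lf}(M;\mathbb{R})$, then $c-c'=\partial D$ for some $D\in C_{n+1}^{lf}(M)$; a priori $D$ need not consist of Lipschitz simplices (so Stokes is unavailable termwise), and even when it does one typically has $|D|_1=\infty$, so the Fubini rearrangement needed to pass from $\sum_j b_j\langle\dvol_M,\partial\tau_j\rangle=0$ to $I(\partial D)=0$ is unjustified. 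The $\ell^1$ and Lipschitz hypotheses you invoke are on $c$, not on $D$, and they do not control $D$.

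The standard repair is to pair first with \emph{compactly supported} forms. For any smooth cutoff $\phi:M\to[0,1]$ with compact support, $\phi\,\dvol_M$ is a closed top-degree form and only finitely many $\sigma_k$ meet $\supp\phi$; that $\langle\phi\,\dvol_M,c\rangle=\lambda\int_M\phi\,\dvol_M$ (where $[c]=\lambda[M]$) is then the classical de Rham pairing $H^n_c\otimes H_n^{lf}\to\mathbb{R}$, which involves no infinite sum over a bounding chain. Now let $\phi_i\nearrow 1$; dominated convergence --- \emph{this} is where~(1) and $|c|_1<\infty$ are genuinely used --- yields $I(c)=\lim_i\langle\phi_i\,\dvol_M,c\rangle=\lambda\vol(M)$, and the equivalence follows.
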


\begin{remark}\label{fundamental_class}
 The second statement of the above proposition gives an easy criterion to distinguish fundamental class. It can be also applied to a given measure cycle, but only if it is homologous to some singular cycle. The reason is that there is no obvious map $\mathcal{C}_*^{PC^1,\Lip}(M)\rightarrow C^{lf}_*(M)$. However, by Proposition~\ref{measure_hom_theories} we obtain a map $\mathcal{H}^{PC^1,\Lip}_*(M)\rightarrow H^{lf}_*(M)$ by composing the inverse of the isometric isomorphism $H^{PC^1,\ell^1,\Lip}_*(M)\rightarrow\mathcal{H}^{PC^1,\Lip}_*(M)$ and a map $H^{PC^1,\ell^1,\Lip}_*(M)\rightarrow H^{lf}_*(M)$ induced by the inclusion of chains. We can therefore define fundamental cycles in $\mathcal{C}_n^{PC^1,\Lip}(M)$ as the cycles representing any class in the preimage of the fundamental class in $H_n^{lf}(M)$.
\end{remark}

The following proposition is a variation of the results from \cite{LS} and the proof is completely analogous. Let $U$ be a common universal cover of $M$ and $N$ with covering maps $p_M$ and $p_N$ respectively, let $G=Isom^+(U)$ and let $\Lambda=\pi_1(N)$. Then $\Lambda$ is a lattice in $G$ \cite[Lemma 4.2]{LS}. Denote by $\mu_{\Lambda\backslash G}$ the normalized Haar measure on $\Lambda\backslash G$.

\begin{prop}[{\cite[Proposition 4.9, Lemma 4.10]{LS}}]\label{smear}
 Let $\sigma:\Delta^*\rightarrow M$ be a piecewise $C^1$ simplex, and let $\wt{\sigma}:\Delta^*\rightarrow U$ be a lift of $\sigma$ to $U$. Then push-forward of $\mu_{\Lambda\backslash G}$ under the map
 $$
 \smear_{\wt{\sigma}}:\Lambda\backslash G \rightarrow \mathscr{P}C^1(\Delta^*,N),\;\Lambda g\mapsto p_N\circ g\wt{\sigma}
 $$
 does not depend on the choice of the lift of $\sigma$ as is denoted by $\mu_\sigma$. Further there is a well-defined chain map
 $$
 \smear_*:C_*^{PC^1,\ell^1,\Lip}(M) \rightarrow \mathcal{C}_*^{PC^1,\Lip}(N),\; \sum_{\sigma}a_\sigma \sigma\mapsto \sum_{\sigma}a_\sigma \mu_\sigma.
 $$
 Moreover, for every fundamental cycle $c\in C_n^{PC^1,\ell^1,\Lip}(M)$ we have
 $$
 \langle \dvol_N,\smear_n(c) \rangle = \int_{\mathscr{P}C^1(\Delta^n,N)}\int_{\Delta^n} \sigma^*\,\dvol_N\,d\smear_n(c)(\sigma)=\vol(M).
 $$
\end{prop}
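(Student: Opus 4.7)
My plan is to prove the three assertions in sequence: independence of $\mu_\sigma$ from the chosen lift, the chain-map property together with the requisite finiteness and Lipschitz-determination conditions, and finally the volume identity for fundamental cycles.

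For independence from the lift, I would note that any two lifts $\wt{\sigma}, \wt{\sigma}'$ of $\sigma:\Delta^*\to M$ differ by a deck transformation of $p_M$, i.e.\ there exists $\gamma\in\pi_1(M)\subset G$ with $\wt{\sigma}'=\gamma\wt{\sigma}$. Then
$$
\smear_{\wt{\sigma}'}(\Lambda g) = p_N(g\gamma\wt{\sigma}) = \smear_{\wt{\sigma}}(\Lambda g\gamma),
$$
so $\smear_{\wt{\sigma}'}$ is the composition of $\smear_{\wt{\sigma}}$ with the right translation $R_\gamma\colon\Lambda g\mapsto \Lambda g\gamma$. Since $G$ is unimodular (it admits the lattice $\Lambda$), the Haar measure on $G$ is right-invariant and descends to a $G$-right-invariant measure $\mu_{\Lambda\backslash G}$; hence $(R_\gamma)_*\mu_{\Lambda\backslash G}=\mu_{\Lambda\backslash G}$ and $\mu_\sigma$ is independent of the chosen lift.

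For the chain-map property, the key points to check are that $\smear_*$ actually lands in $\mathcal{C}_*^{PC^1,\Lip}(N)$ and intertwines the boundary operators. If $\sigma$ is $\mathcal{P}$-$C^1$ then so is $p_N\circ g\wt{\sigma}$ for every $g\in G$, because $g$ is a smooth isometry and $p_N$ a local isometry; in particular every $\mu_\sigma$ is supported on a fixed $\mathcal{P}C^1(\Delta^*,N)$, which also resolves the measurability issue implicit in the definition of $\mathcal{C}_*^{PC^1,\Lip}(N)$. The Lipschitz determination is inherited because $\Lip(p_N\circ g\wt{\sigma})\le \Lip(\wt{\sigma})=\Lip(\sigma)$, and finite variation follows from $|\mu_\sigma|\le 1$ together with $|c|_1<\infty$. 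Commutativity with the boundary reduces by linearity to the identity $\mu_{\partial^q\sigma}=(\partial^q)_*\mu_\sigma$ for a single simplex, which holds because $\wt{\sigma}\circ\partial^q$ is a lift of $\partial^q\sigma$ whenever $\wt{\sigma}$ is a lift of $\sigma$, and the smearing commutes with precomposition by face maps.

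For the volume identity I would compute, for a single simplex $\sigma:\Delta^n\to M$ with a chosen lift $\wt{\sigma}$,
$$
\langle \dvol_N,\mu_\sigma\rangle \;=\; \int_{\Lambda\backslash G}\int_{\Delta^n}(p_N\circ g\wt{\sigma})^*\dvol_N\,d\mu_{\Lambda\backslash G}(\Lambda g).
$$
Because $p_N^*\dvol_N=\dvol_U$ (local isometry), $g^*\dvol_U=\dvol_U$ (orientation-preserving isometry), and $p_M^*\dvol_M=\dvol_U$, we get $(p_N\circ g\wt{\sigma})^*\dvol_N=\sigma^*\dvol_M$. The inner integral is then independent of $g$, and with the normalization $\mu_{\Lambda\backslash G}(\Lambda\backslash G)=1$ this gives $\langle\dvol_N,\mu_\sigma\rangle=\langle\dvol_M,\sigma\rangle$. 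Summing over $c$ and invoking Proposition~\ref{recognise_class} for the fundamental cycle $c$ on $M$ yields $\langle\dvol_N,\smear_n(c)\rangle=\vol(M)$. The main obstacle here is the bookkeeping: one must verify uniformity of the Lipschitz bounds across all simplices of $c$ so that Lipschitz determination is genuinely preserved, and track the Haar normalization carefully so that the final constant comes out as $\vol(M)$ rather than being rescaled.
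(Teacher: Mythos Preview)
The paper does not actually give a proof of this proposition: it is stated as a variant of \cite[Proposition~4.9, Lemma~4.10]{LS} with the remark that ``the proof is completely analogous.'' Your argument is precisely the standard one from \cite{LS}, carried over to the piecewise $C^1$ setting, and it is correct in all three parts; in particular the volume computation via $(p_N\circ g\wt{\sigma})^*\dvol_N=\sigma^*\dvol_M$ and the normalization $\mu_{\Lambda\backslash G}(\Lambda\backslash G)=1$ is exactly how the identity $\langle\dvol_N,\smear_n(c)\rangle=\vol(M)$ is obtained.
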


\begin{proof}[Proof of theorem \ref{PP1} and \ref{PP0}]
 We will show that
 $$
 \frac{\|N\|_{\Lip}}{\vol(N)}\leq \frac{\|M\|_{\Lip}}{\vol(M)}
 $$
 and the opposite inequality will follow by symmetry. For $\|M\|_{\Lip}=\infty$ the inequality is obvious, so we can assume $\|M\|_{\Lip}<\infty$. By Proposition~\ref{homology_theories} in this case there exists a fundamental cycle in $C^{PC^1,\ell^1,\Lip}_n(M)$. Let $c = \sum_{\sigma}a_\sigma\sigma\in C^{PC^1,\ell^1,\Lip}_n(M)$ be a fundamental cycle and consider its image under the smearing map. It follows from Propositions~\ref{smear}, \ref{recognise_class} and Remark~\ref{fundamental_class} that its image $\smear_n(c)$ represents a fundamental class in $\mathcal{C}_*^{PC^1,\Lip}(N)$ multiplied by $\frac{\vol(M)}{\vol(N)}$. Moreover, by the construction of the smearing map
 $$
 |\smear_n(c)| = |\sum_{\sigma}a_\sigma\mu_\sigma| \leq \sum_\sigma |a_\sigma||\mu_\sigma| = \sum_\sigma |a_\sigma| = |c|_1.
 $$
 By Proposition \ref{measure_hom_theories} there exists a cycle in $C^{PC^1\ell^1,\Lip}_n(N)$ which represents the same homology class as $\smear_n(c)$ with not greater $\ell^1$ norm. Because Proposition \ref{homology_theories} implies that the Lipschitz simplicial volume of $M$ can be computed on piecewise $C^1$ cycles we obtain
 $$
 \|N\|_{\Lip}\leq \frac{\vol(N)}{\vol(M)}\|M\|_{\Lip}\;\Rightarrow\; \frac{\|N\|_{\Lip}}{\vol(N)}\leq \frac{\|M\|_{\Lip}}{\vol(M)}.
 $$
 To prove Theorem \ref{PP0} we need only to use the fact that for closed manifolds the classical and Lipschitz simplicial volumes coincide \cite[Remark 1.4]{LS}.
\end{proof}

\bibliography{references}

\end{document}